\newtheorem{Thm}{Theorem}
\newtheorem{Prop}{Proposition}
\newtheorem{Lem}{Lemma}
\newtheorem{Rem}{Remark}
\newtheorem{Cor}{Corollary}
\newcommand{\C}{\mathbb{C}}
\newcommand{\R}{\mathbb{R}}
\newcommand{\Z}{\mathbb{Z}}
\newcommand{\N}{\mathbb{N}}
\newcommand{\T}{\mathbb{T}}
\renewcommand{\Im}{\operatorname{Im}}
\renewcommand{\Re}{\operatorname{Re}}
\newcommand{\I}{\infty}
\newcommand{\norm}[1]{\left\lVert #1\right\rVert}
\newcommand{\Lebn}[2]{\left\lVert #1 \right\rVert_{L^{#2}}}
\newcommand{\Sobn}[2]{\left\lVert #1 \right\rVert_{H^{#2}}}
\newcommand{\hSobn}[2]{\left\lVert #1 \right\rVert_{\dot{H}^{#2}}}
\newcommand{\tnorm}[1]{\lVert #1\rVert}
\def\({\left(}
\def\){\right)}
\def\<{\left\langle}
\def\>{\right\rangle}
\def\d{{\partial}}
\newcommand{\al}{\alpha}
\newcommand{\ep}{\varepsilon}
\newcommand{\la}{\lambda}
\newcommand{\de}{\delta}
\newcommand{\De}{\Delta}
\title
{Construction of blow-up solutions for Zakharov system on $\T^2$}
\author{Nobu Kishimoto${}^{\dagger}$ and Masaya Maeda${}^{\ddagger}$}
\date{}
\begin{document}

\maketitle

\vskip-5mm
\centerline{${}^{\dagger}$Department of Mathematics, Kyoto University,}
\centerline{Kyoto, 606-8502, Japan}

\vskip3mm

\centerline{${}^{\ddagger}$Mathematical institute, Tohoku University,}
\centerline{Sendai, 980-8578, Japan}

\begin{abstract}
We consider the Zakharov system in two space dimension with periodic boundary condition:
\begin{equation}\tag{$\mathrm{Z}$}
\left\{
\begin{aligned}
&i \d_t u = -\De u + nu,\\
& \d_{tt}n= \De n + \De |u|^2,\qquad (t,x)\in [0,T) \times \T^2 .
\end{aligned}
\right.
\end{equation}
We prove the existence of finite time blow-up solutions of (Z).
Further, we show there exists no minimal mass blow-up solution.
\end{abstract}


\section{Introduction}
In this paper, we consider the Zakharov system on $\T^2=(\R/2\pi\Z)^2$:
\begin{equation}\label{eq:Z}\tag{$\mathrm{Z}$}
\left\{
\begin{aligned}
&i \d_t u = -\De u + nu,\\
& \frac{1}{c_0^2}\d_{tt}n= \De n + \De |u|^2,\\
&u(0)=u_0,\ n(0)=n_0,\ n_t(0)=n_1,
\end{aligned}
\right.
\end{equation}
where $c_0>0$, $u:[0,T)\times\T^2\rightarrow \C$, $n:[0,T)\times\T^2\rightarrow\R$ and $u_0$, $n_0$, $n_1$ are initial data.
Further, in our results, we fix $c_0=1$.

Zakharov system was introduced in \cite{Z72} to describe the collapse of Langmuir wave (or electron plasma waves) in a non-magnetized plasma.
In the context of the dynamics of Langmuir wave, $u$ represents the slowly varying envelope of the electric field and $n$ denotes the deviation of the ion density from its mean value.
From the physical point of view, the evolution of (\ref{eq:Z}) leads to the formation of a cavity of ion density and an amplification of the amplitude of the electric field.
Further, the collapse of cavity gives an explanation for the mechanism of the dissipation of long-wavelength plasma waves.
Therefore, the wave collapse, which is the finite time blowup of the solution of (\ref{eq:Z}), plays a central role in the strong turbulence of Langmuir waves.
See, for example \cite{Robinson97, SulemSulem}.

Note that the subsonic limit of Zakharov system ($c_0\rightarrow \infty$) formally gives us the nonlinear Schr\"odinger equation with critical exponent:
\begin{equation}\label{eq:NLS}\tag{$\mathrm{NLS}$}
\left\{
\begin{aligned}
&i \d_t u = -\De u - |u|^2u,\\
&u(0)=u_0.
\end{aligned}
\right.
\end{equation}
We say critical because it is the smallest power that admits blowup in finite time for initial data in the energy class ($u_0\in H^1$).

The local-in-time well-posedness for these equations have been extensively studied.
For a moment we use $X$ to denote $\R^2$ or $\T^2$.
For \eqref{eq:NLS}, it is known for initial data in $H^s(X)$, with $s\geq 0$ for the case $\R^2$ (\cite{CW90}) and with $s>0$ for the case $\T^2$ (\cite{B93-1}).
See \cite{TakaTzv01} for a result on $\R \times \T$.
Moreover, for the strong solution $u\in C([0,T];H^s(X))$ obtained, we have the conservation of mass
\[ \Lebn{u(t)}{2}=\Lebn{u_0}{2},\quad \forall t\in [0,T]\]
and, if $s\geq 1$, the conservation of energy
\[ \frac{1}{2}\Lebn{\nabla u(t)}{2}^2-\frac{1}{4}\Lebn{u(t)}{4}^4=\frac{1}{2}\Lebn{\nabla u_0}{2}^2-\frac{1}{4}\Lebn{u_0}{4}^4,\quad \forall t\in [0,T] .\]

The Zakharov system \eqref{eq:Z} has similar conservation laws.
First, the mass of $u(t)$ is also conserved.
In addition, assume $n_1\in \hat{H}^{-1}(X;\R)$, where
\[ \hat{H}^{-1}(X;\R):=\left\{ \phi \in H^{-1}(X;\R )|\;\exists \psi \in L^2(X;\R ^2)~\text{s.t.}~\phi =-\nabla \psi\right\}\]
(see also Remark~\ref{hatH} below).
Then, $\d_t n(t)\in \hat{H}^{-1}$ for all $t$ and the wave part of \eqref{eq:Z} may be written in the form
\[ \d_t n+\nabla v=0;\qquad \frac{1}{c_0^2}\d_t v+\nabla n=-\nabla (|u|^2),\]
for some $v(t)\in L^2(X;\R ^2)$.
In this case, we have the conservation of energy $\mathcal{E}(t)=\mathcal{E}(0)$, where $\mathcal{E}$ is defined by
\begin{eqnarray}\label{eq:energy}
\mathcal{E}=\mathcal{E}(u,n,v):=\Lebn{\nabla u}{2}^2+\frac{1}{2}\( \Lebn{n}{2}^2+\Lebn{v}{2}^2\) +\int _Xn|u|^2\,dx.
\end{eqnarray}
The local well-posedness of \eqref{eq:Z} on $\R^2$ in the energy space $H^1\times L^2\times \hat{H}^{-1}$ was first obtained by Bourgain and Colliander \cite{BC96}, which was improved to $H^1\times L^2\times H^{-1}$ and wider spaces by Ginibre, Tsutsumi, and Velo \cite{GTV97}.
The lowest regularity in which the local well-posedness is known so far is $L^2\times H^{-1/2}\times H^{-3/2}$ (\cite{BHHT09}). 
The case $\T^2$ is more involved, but the local well-posedness in the energy space (actually in $H^1\times L^2\times H^{-1}$ and some wider spaces) was recently proved, see \cite{Ki}.

\begin{Rem}\label{hatH}
The definition of the $\hat{H}^{-1}$ norm of $\phi=-\nabla \vec{\psi}$ is a bit tricky.
It would not be well-defined if we used simply the $L^2$ norm of $\vec{\psi}$.
For example, consider $\vec{\hat{\psi}}=(\chi _B,-\frac{\xi _2}{\xi _1}\chi _B)$ for $X=\R^2$, where $B:=[1,2]^2\cup [-2,-1]^2$ and $\chi _B$ is the characteristic function of $B$, and $\vec{\psi}=(\cos (x_1+x_2),-\cos (x_1+x_2))$ for $X=\T^2$.
Then, $||\vec{\psi}||_{L^2}>0$ but $\phi =-\nabla \vec{\psi}=0$.

Before the definition we recall the Helmholtz decomposition $L^2(X;\R)^2=L^2_\sigma \oplus ^\perp G$ into the solenoidal space $L^2_\sigma =\{ \vec{\psi}:\nabla \vec{\psi}=0\}$ and the gradient space $G=\{ \nabla \eta:\eta \in \dot{H}^1(X;\R)\}$.
Let $\mathbb{P}_G:L^2(X;\R)^2\to G$ be the orthogonal projection onto $G$.
We now define $||\phi ||_{\hat{H}^{-1}}:=||\mathbb{P}_G\vec{\psi}||_{L^2}$ for $\phi =-\nabla \vec{\psi}$, which is a well-defined norm on $\hat{H}^{-1}$.


\end{Rem}

The time global existence and blow-up problem of (\ref{eq:NLS}) on $\R^2$ and $\T^2$ have also been studied by many authors \cite{G77, MR03, MR04, MR05-2, MR05, MR05-3, MR06, MT90, Nawa99, OT89, OT91, OT91-2, Weinstein82}.
It is well known that if $u_0\in H^1(\R^2)$ and $||u_0||_{L^2(\R^2)}<||Q||_{L^2(\R^2)}$, then the solution of (\ref{eq:NLS}) on $\R^2$ exists globally in time.
(In fact, it was recently shown by Dodson \cite{D} that the assumption $u_0\in H^1$ can be replaced with $u_0\in L^2$.) 
Here, $Q$ is the unique positive radial solution of
\begin{equation}\label{eq:Q}
-\De Q + Q - Q^3 = 0,\ x\in\R^2.
\end{equation}
This also holds for the case $\T^2$.
That is, if $u_0\in H^1(\T^2)$ and $||u_0||_{L^2(\T^2)}<||Q||_{L^2(\R^2)}$, then the solution of (\ref{eq:NLS}) on $\T^2$ exists globally in time.
On the other hand, if $M\geq ||Q||_{L^2(\R^2)}^2$, it is known that there exists $u_0\in H^1(X)$ such that $||u_0||_{L^2(X)}^2=M$ and the solution of (\ref{eq:NLS}) on $X$ blows up in finite time (for the case $\T^2$ see \cite{Antonini02}).
In this sense, $||Q||_{L^2(\R^2)}$ is the sharp threshold for the global existence and blowup of (\ref{eq:NLS}) both on $\R^2$ and on $\T^2$.
For the blow-up problem of (\ref{eq:Z}) on $\R^2$, Glangetas and Merle \cite{GM94} constructed a blow-up solution with $||u_0||_{L^2(\R^2)}$ arbitrarily near $||Q||_{L^2}$.
Further, in \cite{GM94b} they showed that if $||u_0||_{L^2(\R^2)}\leq ||Q||_{L^2(\R^2)}$, then the solution of (\ref{eq:Z}) on $\R^2$ with $(u(0),n(0),n_t(0))=(u_0,n_0,n_1)\in H^1(\R^2)\times L^2(\R^2)\times H^{-1}(\R^2)$ exists globally in time.
However, it seems there is no counterpart of the results of Glangetas and Merle for $\T^2$  as far as the authors know.

In this paper, we construct a blow-up solution of (\ref{eq:Z}) by using the fixed point argument.
Further, as in the $\R^2$ case, we show that if $||u_0||_{L^2(\T^2)}\leq ||Q||_{L^2(\R^2)}$, then the solution of \eqref{eq:Z} exists globally in time.
For (\ref{eq:NLS}), Burq, G\'erard, and Tzvetkov \cite{BGT03} constructed a blow-up solution on $\T^2$ by adapting an idea of Ogawa and Tsutsumi \cite{OT89}, who treated a similar problem on $\T$.
In \cite{BGT03} they cut off the explicit blow-up solution on $\R^2$ and solved the perturbed equation.
Thus we use the blow-up solution of (\ref{eq:Z}) on $\R^2$ which was constructed by Glangetas and Merle.
However, in contrast to (\ref{eq:NLS}), (\ref{eq:Z}) has a derivative in the nonlinearity.
Therefore, we cannot directly apply the argument of \cite{BGT03} because of the so-called ``loss of derivative.''
To overcome this difficulty, we introduce the modified energy and derive an a priori estimate for the approximate solutions.
Our main result is as follows.
\begin{Thm}\label{Thm:blowup}
For arbitrary $M>||Q||_{L^2(\R^2)}^2$, there exists $T=T(M)>0$ and a solution $(u,n)$ of \eqref{eq:Z} in the class $(u,n,n_t)\in C([0,T); H^1(\T^2)\times L^2(\T^2)\times \hat{H}^{-1}(\T^2))$ with the following properties.
\begin{enumerate}
\item[$(\mathrm{i})$]
$||u(t)||_{L^2(\T^2)}^2<M$.
\item[$(\mathrm{ii})$]
$C_1(T-t)^{-1}\leq ||u(t)||_{H^1(\T^2)}+||n(t)||_{L^2(\T^2)}+||n_t(t)||_{\hat{H}^{-1}(\T^2)} \leq C_2(T-t)^{-1}$ for some $C_1,C_2>0$.
\item[$(\mathrm{iii})$]
$\lim\limits _{t\to T}||\nabla u(t)||_{L^2(\T^2\setminus B(0,r))}=0$, $\lim\limits _{t\to T}||n(t)||_{L^2(\T^2\setminus B(0,r))}=0$ for any $r>0$ sufficiently small, where $B(a,r):=\{x\in\T^2 |\ |x-a|<r\}$.

\end{enumerate}
\end{Thm}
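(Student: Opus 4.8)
\medskip
\noindent\textbf{Proof strategy.}\quad
The plan is to transplant to $\T^2$ the self-similar blow-up solution of \eqref{eq:Z} on $\R^2$ of Glangetas and Merle and to control the resulting error by a modified energy. Fix $M>\norm{Q}_{L^2(\R^2)}^2$. Since the Glangetas--Merle solutions have $\norm{u_0}_{L^2(\R^2)}^2$ arbitrarily close to $\norm{Q}_{L^2(\R^2)}^2$, and since \eqref{eq:Z} on $\R^2$ is invariant under the scaling $(u,n)(t,x)\mapsto\mu^2(u,n)(\mu^2 t,\mu x)$, which preserves $\norm{u}_{L^2}$, we may fix one such solution $(u^\star,n^\star)$ with $\norm{Q}_{L^2}^2<\norm{u^\star(0)}_{L^2}^2<M$, blowing up at a time $T^\star>0$ (as small as we wish, after rescaling) at the spatial origin, at the rate $\norm{\nabla u^\star(t)}_{L^2}+\norm{n^\star(t)}_{L^2}+\norm{n^\star_t(t)}_{\hat H^{-1}}\sim(T^\star-t)^{-1}$, with smooth and rapidly decaying profiles. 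Let $\chi\in C^\infty(\T^2)$ be a cutoff supported in a small ball $B(0,R)$ and equal to $1$ near $0$, and set $\tilde u:=\chi u^\star$, $\tilde n:=\chi n^\star$, together with a $\tilde V$ built from $\partial_t\tilde n$ (projected onto the gradient space so that $\partial_t\tilde n\in\hat H^{-1}$). Because the Glangetas--Merle profiles concentrate at $0$, on $[0,T^\star)$ the triple $(\tilde u,\tilde n,\tilde V)$ solves \eqref{eq:Z} up to errors $(F_1,F_2)$ that are smooth, supported in $\{\nabla\chi\ne0\}$, and --- once $T^\star$ is small enough --- of size $O\!\big((T^\star-t)^{-1}e^{-cR/(T^\star-t)}\big)$ in every Sobolev norm, hence uniformly tiny and fast-decaying.

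Writing the sought solution as $(u,n)=(\tilde u+w,\tilde n+m)$, with $n_t=\partial_t\tilde n-\nabla\!\cdot\ell$, the triple $(w,m,\ell)$ solves the linearization of \eqref{eq:Z} about $(\tilde u,\tilde n,\tilde V)$, plus the quadratic self-interactions ($mw$ in the Schr\"odinger equation, $\Delta|w|^2$ in the wave equation) and the tiny forcing $(F_1,F_2)$. We would solve this system by a fixed point argument on $[0,T^\star)$, in a space of functions at the energy-space level $H^1(\T^2)\times L^2(\T^2)\times L^2(\T^2)$ --- with a little extra room, via Strichartz estimates on $\T^2$ or a slightly higher regularity, to absorb the cubic terms --- tending to $0$ as $t\to T^\star$; the last requirement is what singles out the solution despite the near-threshold instability of the Glangetas--Merle profile, possibly after a finite-dimensional adjustment of the free parameters ($T^\star$, scaling, phase) of the family. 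Following the change of variables of Ogawa--Tsutsumi and Burq--G\'erard--Tzvetkov it is natural to carry this out in the self-similar coordinates $(s,y)=(-\log(T^\star-t),\,x/(T^\star-t))$, in which the background becomes the stationary Glangetas--Merle profile on a torus of side-length $\sim e^s$, the forcing decays double-exponentially in $s$, and the data is zero at $s_0=-\log T^\star$.

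The crux --- the point where a direct adaptation of Burq--G\'erard--Tzvetkov breaks down --- is the a priori estimate for $(w,m,\ell)$, because the derivative nonlinearity $\Delta|u|^2$ makes the naive energy $\norm{\nabla w}_{L^2}^2+\tfrac12\norm{m}_{L^2}^2+\tfrac12\norm{\ell}_{L^2}^2$ lose a derivative: differentiating it along the flow produces terms such as $2\,\Im\!\int\tilde u\,(\nabla m)\cdot\overline{\nabla w}\,dx$, in which $m$ carries one derivative too many, together with $-2\!\int(\partial_t m)\,\Re(\overline{\tilde u}w)\,dx$ from the wave part. The remedy is to use the modified energy
\[
\mathcal E:=\norm{\nabla w}_{L^2}^2+\tfrac12\norm{m}_{L^2}^2+\tfrac12\norm{\ell}_{L^2}^2+2\!\int m\,\Re(\overline{\tilde u}\,w)\,dx+\int\tilde n\,|w|^2\,dx ,
\]
whose correction terms are exactly the quadratic-in-$(w,m)$ part of the interaction term $\int n|u|^2$ of the conserved Zakharov energy \eqref{eq:energy}; thus $\mathcal E$ is, up to harmless lower-order contributions, the second variation of \eqref{eq:energy} about the background. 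When the correction terms are differentiated, the piece containing $\partial_t m$ cancels the dangerous term $-2\!\int(\partial_t m)\Re(\overline{\tilde u}w)\,dx$, while the piece coming from $\partial_t w=i\Delta w+\dots$, after an integration by parts, produces exactly the negative of the offending Schr\"odinger term --- the cancellations relying on $m$ and $\tilde n$ being real. Hence $\tfrac{d}{dt}\mathcal E$ contains no loss of derivative, and is bounded by $C\big(\norm{w}_{H^1}^2+\norm{m}_{L^2}^2+\norm{\ell}_{L^2}^2\big)+O\big(\norm{(F_1,F_2)}\,\norm{(w,m,\ell)}\big)+O\big(\norm{(w,m,\ell)}^3\big)$, with $C$ controlled by fixed norms of the Glangetas--Merle profile. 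Since $\mathcal E$ is equivalent to the squared norm for small perturbations, this is the estimate that drives the fixed point, the tiny fast-decaying forcing making the contraction converge up to $T^\star$. I expect this step --- identifying the correct correction terms (here, and one order higher if the cubic terms force extra regularity) so that every derivative loss cancels, and checking that the resulting estimate is compatible both with the fixed point and with the prescribed blow-up rate --- to be the main obstacle; the rest is the cutting-off scheme of Burq--G\'erard--Tzvetkov.

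Granting this, the fixed point furnishes a solution $(u,n)$ of \eqref{eq:Z} on $[0,T^\star)$, which lies in $C([0,T^\star);H^1(\T^2)\times L^2(\T^2)\times\hat H^{-1}(\T^2))$ by the local well-posedness theory on $\T^2$, and for which $(w,m,\ell)\to0$ rapidly as $t\to T^\star$. Property (i) then follows from conservation of mass together with $\norm{u(0)}_{L^2}^2=\norm{\chi u^\star(0)}_{L^2}^2\le\norm{u^\star(0)}_{L^2}^2<M$; property (ii) follows because $(\tilde u,\tilde n,\tilde V)$ blows up exactly at rate $(T^\star-t)^{-1}$ in $H^1\times L^2\times\hat H^{-1}$ while $(w,m,\ell)$ is of strictly lower order there; and property (iii) follows because the Glangetas--Merle profiles concentrate at the origin, so that $\tilde u,\tilde n\to0$ in $L^2(\T^2\setminus B(0,r))$, while $(w,m)\to0$ by the a priori bound. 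Taking $T:=T^\star$ finishes the proof.
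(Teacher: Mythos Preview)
Your modified-energy idea is on the right track and close in spirit to the paper's, but there is a concrete gap that would prevent the argument from closing as written.

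You assert that both cutoff errors $(F_1,F_2)$ are of size $O\!\big((T^\star-t)^{-1}e^{-cR/(T^\star-t)}\big)$. This is true for the Schr\"odinger error $F_1$, because the profile $P_\lambda$ decays exponentially in space. It is \emph{false} for the wave error $F_2$: by Proposition~\ref{prop:gm} the wave profile satisfies only $|N_\lambda^{(k)}(x)|\lesssim\langle x\rangle^{-(3+k)}$, so on $\{\nabla\chi\neq 0\}$ one has $|\tilde n(t,x)|\sim(\lambda(T^\star-t))^{-2}\langle R/\lambda(T^\star-t)\rangle^{-3}\sim T^\star-t$, and likewise for derivatives. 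Thus $F_2$ decays only \emph{polynomially} in $T^\star-t$ (only single-exponentially in your self-similar time $s$, not double-exponentially). With a polynomially decaying wave force, $m$ can decay only polynomially; the term $m\tilde u$ then forces $w$ to decay only polynomially; and now the linear term $\tilde n\,w$, with $\|\tilde n\|_{L^\infty}\sim(T^\star-t)^{-2}$, cannot be absorbed---recall that, unlike Burq--G\'erard--Tzvetkov, one cannot take the scaling parameter large here since the Glangetas--Merle family exists only for small $\lambda$. The paper fixes this by first solving the free wave equation with the slow force $F_2$ and data supported away from $0$: by finite propagation speed the resulting piece $Z_{\lambda,a}$ vanishes near the blow-up point for short time, so $U_\lambda Z_{\lambda,a}$ is exponentially small and may be thrown into the Schr\"odinger forcing. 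Only \emph{after} this decomposition does the remaining system have exponentially decaying forcing in both equations. Your sketch lacks this step.

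A secondary gap: you propose a direct fixed point in $H^1\times L^2\times L^2$. The modified energy yields an a~priori bound, but not a contraction; a naive Picard iteration still loses a derivative (e.g.\ the Schr\"odinger Duhamel term contains $\tilde u\,m$, and placing this in $H^1$ already asks for $m\in H^1$). The paper instead adds $i\varepsilon\Delta^2$ to both equations, runs the contraction at regularity $H^3\times H^2$ (where products are trivial and the smoothing beats the loss), proves an $\varepsilon$-uniform bound via a modified energy at that level---with the correction $2e^{\mu/\lambda t}\Re\!\int(u+U_\lambda)\Delta\bar u\,\Delta\Re r$ playing the role of your $2\!\int m\,\Re(\bar{\tilde u}w)$---and passes to the limit by Ascoli--Arzel\`a. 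The weighted norm is also designed so that lower-order norms of $u$ carry extra powers of $t$, which is what makes the $\nabla^l W_\lambda\cdot\nabla^{k-l}u$ terms (for $l\ge 1$) harmless despite $\lambda$ being small; your energy-level sketch does not display the analogue of this mechanism.
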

Our approach is easily generalized to the case of exactly $p$ blow-up points.
A similar generalization was mentioned by Godet~\cite{Godet} for \eqref{eq:NLS}.
\begin{Cor}\label{cor:blowup}
Let $\{ x_1,\dots ,x_p\}$ be distinct points in $\T^2$.
For arbitrary $M>p||Q||_{L^2(\R^2)}^2$, there exists $T=T(M)>0$ and a solution $(u,n)$ of \eqref{eq:Z} in the class $(u,n,n_t)\in C([0,T); H^1(\T^2)\times L^2(\T^2)\times \hat{H}^{-1}(\T^2))$ with the following properties.
\begin{enumerate}
\item[$(\mathrm{i})$]
$||u(t)||_{L^2(\T^2)}^2<M$.
\item[$(\mathrm{ii})$]
$C_1(T-t)^{-1}\leq ||u(t)||_{H^1(\T^2)}+||n(t)||_{L^2(\T^2)}+||n_t(t)||_{\hat{H}^{-1}(\T^2)} \leq C_2(T-t)^{-1}$ for some $C_1,C_2>0$.
\item[$(\mathrm{iii})$]
$\lim\limits _{t\to T}(T-t)||\nabla u(t)||_{L^2(B(x_j,r))}>0$, $\lim\limits _{t\to T}(T-t)||n(t)||_{L^2(B(x_j,r))}>0$, and $\lim\limits _{t\to T}||\nabla u(t)||_{L^2(\T^2\setminus \cup _{j=1}^pB(x_j,r))}=0$, $\lim\limits _{t\to T}||n(t)||_{L^2(\T^2\setminus \cup _{j=1}^pB(x_j,r))}=0$ for any $r>0$ sufficiently small and any $1\leq j\leq p$.
\end{enumerate}
\end{Cor}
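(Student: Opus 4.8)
The plan is to superimpose $p$ copies of the single-bubble blow-up solution furnished by Theorem~\ref{Thm:blowup}, one localized near each $x_j$, and to close the \emph{same} fixed-point argument for the global remainder. First I would fix $r_0>0$ so small that the balls $B(x_j,2r_0)$, $1\le j\le p$, are pairwise disjoint; since the $x_j$ are distinct this is possible. Recall that the solution of Theorem~\ref{Thm:blowup} is built as $u=U+w$, $n=N+m$, where $(U,N)$ is an explicit approximate blow-up profile obtained by rescaling and cutting off the Glangetas--Merle solution of \eqref{eq:Z} on $\R^2$, with $U(t,\cdot),N(t,\cdot)$ supported in $B(0,r_0)$ and concentrating at a prescribed time $T$, and $(w,m)$ a small remainder produced by the contraction mapping together with the modified-energy a priori bound. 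I would run that construction with mass budget $M/p$ per bubble; this is admissible precisely because $M/p>\|Q\|_{L^2(\R^2)}^2$, which is the hypothesis of the corollary, and I would use the same concentration time $T=T(M/p)$ for every bubble (possible since $T$ enters the construction only through the scaling of the profile).

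Next I would translate the profile to each point: let $(U_j,N_j)$ be $(U,N)$ moved so as to be supported in $B(x_j,r_0)$ (using periodicity of $\T^2$ and the fact that a ball of radius $r_0$ around $x_j$ is isometric to a disc in $\R^2$), and set $U^{\mathrm{app}}:=\sum_{j=1}^p U_j$, $N^{\mathrm{app}}:=\sum_{j=1}^p N_j$. Because the $U_j$ have pairwise disjoint supports, and $\De$, pointwise multiplication, and complex conjugation all preserve supports, the nonlinear terms decouple: $N^{\mathrm{app}}U^{\mathrm{app}}=\sum_j N_jU_j$ and $|U^{\mathrm{app}}|^2=\sum_j|U_j|^2$, the cross terms $N_jU_k$ and $U_j\overline{U_k}$ with $j\neq k$ vanishing identically. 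Hence the residual of $(U^{\mathrm{app}},N^{\mathrm{app}})$ in \eqref{eq:Z} is exactly the sum of the $p$ individual residuals, so writing $u=U^{\mathrm{app}}+w$, $n=N^{\mathrm{app}}+m$ yields for $(w,m)$ an equation of the same form as in the proof of Theorem~\ref{Thm:blowup}, the only change being that its source term is a sum of $p$ localized pieces rather than one. The modified energy and the contraction estimate therefore apply with no structural modification; since $p$ is fixed, the constants change by at most a $p$-dependent factor, and one obtains $(w,m)\in C([0,T);H^1\times L^2\times\hat{H}^{-1})$ small in the relevant norm. No new bilinear or energy estimate is needed, since there is genuinely no bubble--bubble interaction to control.

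Finally I would read off $(\mathrm{i})$--$(\mathrm{iii})$ from their one-bubble analogues. For $(\mathrm{i})$, disjointness gives $\|U^{\mathrm{app}}(t)\|_{L^2}^2=\sum_j\|U_j(t)\|_{L^2}^2<p\cdot(M/p)=M$, and the small remainder is absorbed. For $(\mathrm{ii})$, again by disjointness $\|U^{\mathrm{app}}(t)\|_{H^1}^2$, $\|N^{\mathrm{app}}(t)\|_{L^2}^2$ and $\|\partial_tN^{\mathrm{app}}(t)\|_{\hat{H}^{-1}}^2$ are $p$ times the single-bubble quantities, so the total norm remains comparable to $(T-t)^{-1}$ from above and below (with constants adjusted by $\sqrt p$). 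For $(\mathrm{iii})$, near each $x_j$ the solution agrees with the $j$-th bubble up to the small remainder, and the explicit profile $U$ satisfies $(T-t)\|\nabla U(t)\|_{L^2(B(0,r))}\ge c>0$ and $(T-t)\|N(t)\|_{L^2(B(0,r))}\ge c>0$, which gives the lower bounds at each $x_j$; while away from $\bigcup_j B(x_j,r)$ each $U_j,N_j$ is concentrated and Theorem~\ref{Thm:blowup}$(\mathrm{iii})$ makes the corresponding norms vanish as $t\to T$.

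The point I expect to need the most care is precisely this last one: one must ensure that the localized lower bounds of $(\mathrm{iii})$ survive the presence of the other, spatially separated, bubbles, and that the concentration times can indeed be synchronized across bubbles --- both of which are straightforward given the explicit structure of the Glangetas--Merle profile, but should be spelled out. Everything else is an essentially cosmetic modification of the single-bubble argument, which is why the generalization is ``easy.''
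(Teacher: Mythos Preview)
Your proposal is correct and follows essentially the same route as the paper: place $p$ disjointly supported copies of the cut-off Glangetas--Merle profile (same $\lambda$, hence synchronized blow-up time), observe that $\psi_i\psi_j\equiv 0$ kills all cross terms so the residual is a sum of $p$ one-bubble residuals, and rerun the parabolic-regularization/modified-energy machinery with the correction term now reading $2e^{\mu/(\lambda t)}\Re\int(v+\sum_jU_j)\Delta\bar v\,\Delta\Re r$. The only points the paper spells out that you leave implicit are the further splitting of the wave remainder into the bounded piece $Z_{\lambda,a}$ and the exponentially decaying piece $\Re r$ (needed exactly as in the one-bubble case), and the choice of $a$ ensuring $n_t\in\hat H^{-1}$; both are indeed carried over unchanged.
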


The global existence of the solution for the case $||u_0||_{L^2(\T^2)}\leq ||Q||_{L^2(\R^2)}$ is a corollary of the following mass concentration result.

\begin{Thm}\label{thm:concentration}
Suppose $(u,n)$ is the solution of \eqref{eq:Z} which blows up at $t=T\in (0,\infty )$.
Then, there exists $t_n\rightarrow T$ and $y_n\in\T^2$ such that
\begin{eqnarray*}
\liminf_{n\rightarrow \infty}\int_{|x-y_n|<R}|u(t_n,x)|^2\,dx\geq ||Q||_{L^2(\R^2)}^2
\end{eqnarray*}
for any $R>0$.
\end{Thm}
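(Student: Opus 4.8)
The plan is to run the classical mass-concentration argument for the $L^2$-critical \eqref{eq:NLS} (carried out on $\T^2$ in \cite{Antonini02}), taking care of the two features special to the Zakharov system: the blow-up alternative is phrased in the full energy space $H^1\times L^2\times\hat H^{-1}$, and the energy $\mathcal E$ of \eqref{eq:energy} carries the coupling term $\int_{\T^2}n|u|^2\,dx$. First I would check that $\limsup_{t\to T}\|\nabla u(t)\|_{L^2}=\I$. If not, $\|\nabla u(t)\|_{L^2}$ stays bounded on $[0,T)$, so by the sharp Gagliardo--Nirenberg inequality $\|f\|_{L^4}^4\le\frac{2}{\|Q\|_{L^2}^2}\|\nabla f\|_{L^2}^2\|f\|_{L^2}^2$ and conservation of mass so does $\|u(t)\|_{L^4}$; inserting $\big|\int_{\T^2}n|u|^2\,dx\big|\le\|n\|_{L^2}\|u\|_{L^4}^2\le\frac14\|n\|_{L^2}^2+\|u\|_{L^4}^4$ into the conserved $\mathcal E$ then bounds $\|n(t)\|_{L^2}$ and $\|n_t(t)\|_{\hat H^{-1}}=\|v(t)\|_{L^2}$ (the last identity by Remark~\ref{hatH}), so the whole energy-space norm stays bounded on $[0,T)$, contradicting the blow-up criterion of the local theory \cite{Ki}. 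Fix $t_n\to T$ with $\|\nabla u(t_n)\|_{L^2}\to\I$.

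Write $\mathcal E_0$ for the conserved value of $\mathcal E$. Using $\int_{\T^2}n|u|^2\,dx\ge-\frac12\|n\|_{L^2}^2-\frac12\|u\|_{L^4}^4$ in \eqref{eq:energy} gives $\mathcal E_0\ge\|\nabla u(t_n)\|_{L^2}^2-\frac12\|u(t_n)\|_{L^4}^4$, i.e.
\[ \|u(t_n)\|_{L^4}^4\ \ge\ 2\|\nabla u(t_n)\|_{L^2}^2-2\mathcal E_0 ; \]
together with the sharp Gagliardo--Nirenberg inequality and $\|\nabla u(t_n)\|_{L^2}\to\I$ this also forces $\|u_0\|_{L^2}^2\ge\|Q\|_{L^2}^2$, which is precisely the global-existence statement for $\|u_0\|_{L^2}\le\|Q\|_{L^2}$. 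Set $\rho_n:=\|\nabla Q\|_{L^2}/\|\nabla u(t_n)\|_{L^2}\to0$; since $\rho_n$ is much smaller than the size of $\T^2$, a standard localization (as in the \eqref{eq:NLS} reduction of \cite{Antonini02}) lets me replace $u(t_n)$ by the $L^2$-invariant rescaling $v_n(x):=\rho_n\,u(t_n,\rho_n x)$, regarded on $\R^2$, which satisfies $\|v_n\|_{L^2(\R^2)}=\|u_0\|_{L^2}$, $\|\nabla v_n\|_{L^2(\R^2)}=\|\nabla Q\|_{L^2}$ and, by the displayed bound, $\|v_n\|_{L^4(\R^2)}^4=\rho_n^2\|u(t_n)\|_{L^4}^4\ge2\|\nabla Q\|_{L^2}^2-2\rho_n^2\mathcal E_0\to2\|\nabla Q\|_{L^2}^2=\|Q\|_{L^4}^4$.

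Applying the profile decomposition for bounded sequences in $H^1(\R^2)$, I would write, along a subsequence, $v_n=\sum_{j=1}^{\ell}V^j(\cdot-x_n^j)+r_n^{\ell}$ with $|x_n^j-x_n^k|\to\I$ for $j\ne k$, $\limsup_n\|r_n^{\ell}\|_{L^4}\to0$ as $\ell\to\I$, and asymptotic orthogonality of the $L^2$, $\dot H^1$ and $L^4$ norms. Letting $n\to\I$ then $\ell\to\I$, the $L^4$ orthogonality gives $\|Q\|_{L^4}^4\le\sum_j\|V^j\|_{L^4}^4$; applying the sharp Gagliardo--Nirenberg inequality to each profile and using $\sum_j\|\nabla V^j\|_{L^2}^2\le\lim_n\|\nabla v_n\|_{L^2}^2=\|\nabla Q\|_{L^2}^2$,
\[ \|Q\|_{L^4}^4\ \le\ \frac{2}{\|Q\|_{L^2}^2}\Big(\sup_j\|V^j\|_{L^2}^2\Big)\sum_j\|\nabla V^j\|_{L^2}^2\ \le\ \frac{2\|\nabla Q\|_{L^2}^2}{\|Q\|_{L^2}^2}\,\sup_j\|V^j\|_{L^2}^2 . \]
Since $\|Q\|_{L^4}^4=2\|\nabla Q\|_{L^2}^2$, this gives $\sup_j\|V^j\|_{L^2}^2\ge\|Q\|_{L^2}^2$; as $\sum_j\|V^j\|_{L^2}^2\le\|u_0\|_{L^2}^2<\I$, the supremum is attained, say by $V:=V^{j_0}$ with centers $x_n:=x_n^{j_0}$, so $v_n(\cdot+x_n)\rightharpoonup V$ in $H^1(\R^2)$ with $\|V\|_{L^2(\R^2)}^2\ge\|Q\|_{L^2}^2$. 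By Rellich's theorem $v_n(\cdot+x_n)\to V$ in $L^2_{\mathrm{loc}}(\R^2)$; setting $y_n:=\rho_n x_n$, for any fixed $R>0$ and any $R_0>0$ one has $R/\rho_n>R_0$ for $n$ large, so $\int_{|x-y_n|<R}|u(t_n,x)|^2\,dx=\int_{|z|<R/\rho_n}|v_n(z+x_n)|^2\,dz\ge\int_{|z|<R_0}|v_n(z+x_n)|^2\,dz\to\int_{|z|<R_0}|V|^2\,dz$ as $n\to\I$; hence $\liminf_n\int_{|x-y_n|<R}|u(t_n,x)|^2\,dx\ge\int_{|z|<R_0}|V|^2\,dz$ for every $R_0$, and $R_0\to\I$ yields $\liminf_n\int_{|x-y_n|<R}|u(t_n,x)|^2\,dx\ge\|V\|_{L^2(\R^2)}^2\ge\|Q\|_{L^2(\R^2)}^2$, as required.

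I expect the only genuinely delicate step to be the localization that passes from $\T^2$ to $\R^2$: the concentrating profile has width $\sim\rho_n\to0$, far below the scale of the torus, so one must truncate $u(t_n)$ at an intermediate scale and control the terms where a derivative falls on the cutoff, in such a way that neither the $\dot H^1$ blow-up rate nor the Gagliardo--Nirenberg saturation of the second paragraph is lost. The Zakharov-specific ingredient, namely the coupling term $\int_{\T^2}n|u|^2\,dx$ in $\mathcal E$, is handled throughout only by Cauchy--Schwarz and Young's inequality, and does not otherwise disturb the \eqref{eq:NLS}-type scheme.
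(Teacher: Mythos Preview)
Your approach is sound and follows a genuinely different route from the paper. Both arguments begin by showing $\|\nabla u(t_n)\|_{L^2}\to\infty$ along a sequence and using the energy to obtain $\|u(t_n)\|_{L^4}^4\ge 2\|\nabla u(t_n)\|_{L^2}^2-O(1)$, then rescale so that the $\dot H^1$ norm is normalized. From there the paper works directly on the expanding tori $\T_n^2=(\R/2\pi\lambda_n\Z)^2$ and argues by contradiction via a tailored concentration-compactness lemma (Proposition~\ref{prop:cc}): assuming the concentration function is everywhere $\le\|Q\|_{L^2}^2-\delta_0$, the rescaled sequence splits into finitely many bubbles $v_{j,k}$, each supported at scale $\ll\lambda_k$ and hence liftable to $\R^2$ with mass $\le\|Q\|_{L^2}^2-\delta_0$, plus a remainder of small $L^4$ norm; the sharp $\R^2$ Gagliardo--Nirenberg inequality then forces $\tilde{\mathcal E}(v_{j,k})\ge c\|\nabla v_{j,k}\|_{L^2}^2$ on each bubble, contradicting $\limsup\tilde{\mathcal E}_n(U_n)\le 0$. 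You instead localize to $\R^2$ first and run a Hmidi--Keraani-style profile decomposition, extracting a single profile of mass $\ge\|Q\|_{L^2}^2$ directly from the Pohozaev identity $\|Q\|_{L^4}^4=2\|\nabla Q\|_{L^2}^2$. Your route is more constructive and arguably cleaner once the localization is done; the paper's route has the advantage of absorbing the $\T^2\to\R^2$ passage you flag as delicate into the concentration-compactness splitting on the expanding torus itself, so no separate truncation argument is needed.

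Two small corrections. First, the statement does not assume $n_1\in\hat H^{-1}$, so $\mathcal E$ is in general not conserved; the paper shows instead (Lemma~\ref{lem:bupu}) that $\frac{d}{dt}\mathcal E\lesssim 1+\mathcal E$ and hence $\mathcal E(t)\lesssim_T 1$ on $[0,T)$ by Gronwall, which is all your argument actually needs. Second, the sharp Gagliardo--Nirenberg inequality you invoke in the first paragraph is the $\R^2$ version; on $\T^2$ it carries an additional lower-order term $B\|f\|_{L^2}^4$ (Theorem~\ref{thm:gn}), which is harmless for the boundedness claim there but should be stated correctly.
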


\begin{Cor}\label{cor:nonexistence}
If $(u_0,n_0,n_1)\in H^1(\T^2)\times L^2(\T^2)\times H^{-1}(\T^2)$ satisfies $||u_0||_{L^2(\T^2)}\leq ||Q||_{L^2(\R^2)}$, then the corresponding solution of \eqref{eq:Z} exists globally in time.
\end{Cor}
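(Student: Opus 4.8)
\medskip\noindent\emph{Proof strategy.}
The plan is to argue by contradiction: suppose $\|u_0\|_{L^2(\T^2)}\le\|Q\|_{L^2(\R^2)}$ but the maximal solution of \eqref{eq:Z} blows up at some finite $T\in(0,\infty)$. From the local well-posedness theory in the energy space (\cite{Ki}) this forces $\|u(t)\|_{H^1}+\|n(t)\|_{L^2}+\|n_t(t)\|_{\hat H^{-1}}\to\infty$ as $t\uparrow T$, while the mass is conserved, $\|u(t)\|_{L^2(\T^2)}=\|u_0\|_{L^2(\T^2)}$. After subtracting the mean of $n$ (which grows at most linearly in $t$, hence stays bounded on $[0,T]$) and passing to the $v$-formulation, we may also assume $n_1\in\hat H^{-1}$, so that the energy \eqref{eq:energy} is conserved; we shall only use the resulting uniform bounds.

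\medskip
If $\|u_0\|_{L^2(\T^2)}<\|Q\|_{L^2(\R^2)}$, the contradiction is immediate from Theorem~\ref{thm:concentration}: it furnishes $t_n\to T$ and $y_n\in\T^2$ with $\liminf_n\int_{|x-y_n|<R}|u(t_n,x)|^2\,dx\ge\|Q\|_{L^2(\R^2)}^2$ for every $R>0$, whereas $\int_{|x-y_n|<R}|u(t_n,x)|^2\,dx\le\|u(t_n)\|_{L^2(\T^2)}^2=\|u_0\|_{L^2(\T^2)}^2<\|Q\|_{L^2(\R^2)}^2$. (In particular the solution is global whenever $\|u_0\|_{L^2}<\|Q\|_{L^2}$, without using the energy.)

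\medskip
The crux — and the step I expect to be the main obstacle — is the critical (minimal-mass) case $\|u_0\|_{L^2(\T^2)}=\|Q\|_{L^2(\R^2)}$. Here Theorem~\ref{thm:concentration} forces the \emph{entire} mass of $u(t_n)$ to concentrate at $y_n$: $\int_{|x-y_n|<R}|u(t_n,x)|^2\,dx\to\|Q\|_{L^2(\R^2)}^2$ and $\|u(t_n)\|_{L^2(\T^2\setminus B(y_n,R))}\to0$ for all $R>0$. Rewrite the conserved energy in completed-square form,
\[ \mathcal E=\Big(\|\nabla u\|_{L^2}^2-\tfrac12\|u\|_{L^4}^4\Big)+\tfrac12\|n+|u|^2\|_{L^2}^2+\tfrac12\|v\|_{L^2}^2 ; \]
at critical mass all three summands are nonnegative, since the sharp Gagliardo--Nirenberg inequality $\|u\|_{L^4}^4\le\tfrac{2}{\|Q\|_{L^2}^2}\|\nabla u\|_{L^2}^2\|u\|_{L^2}^2$ (with $Q$ as in \eqref{eq:Q}) gives $\tfrac12\|u\|_{L^4}^4\le\|\nabla u\|_{L^2}^2$ when $\|u\|_{L^2}=\|Q\|_{L^2}$. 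Hence $\|v(t)\|_{L^2}$ and $\|n(t)+|u(t)|^2\|_{L^2}$ are bounded uniformly in $t$, so by Remark~\ref{hatH} $\|n_t(t)\|_{\hat H^{-1}}\le\|v(t)\|_{L^2}$ stays bounded and $\|n(t)\|_{L^2}\le\|u(t)\|_{L^4}^2+O(1)$; the blow-up alternative then forces $\|\nabla u(t_n)\|_{L^2}\to\infty$ along a subsequence, and $\tfrac12\|u(t_n)\|_{L^4}^4/\|\nabla u(t_n)\|_{L^2}^2\ge 1-\mathcal E(0)/\|\nabla u(t_n)\|_{L^2}^2\to 1$, i.e.\ asymptotic equality in the sharp Gagliardo--Nirenberg inequality. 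By its rigidity — via a profile decomposition, using that at the collapsing scale the torus is indistinguishable from $\R^2$ and $Q$ is the unique optimizer up to scaling, translation and phase — one deduces that $\lambda_n u(t_n,\lambda_n\,\cdot+y_n)\to e^{i\theta_n}Q$ in $H^1$ for suitable $\lambda_n\sim\|\nabla u(t_n)\|_{L^2}^{-1}\to 0$, and correspondingly $n(t_n)\approx -|u(t_n)|^2$ in $L^2$. It then remains to contradict this picture: the source $\Delta|u(t_n)|^2$ of the wave equation concentrates at the scale $\lambda_n$, while $n$ obeys a wave equation with the fixed finite propagation speed $c_0=1$; a quantitative analysis of this interaction — tracking how much energy the collapsing, $Q$-profile source injects into the wave field versus how fast that energy can be carried away across the light cone shrinking to the blow-up point, adapting the $\R^2$ nonexistence argument of Glangetas--Merle \cite{GM94b} to the periodic setting (with Theorem~\ref{thm:concentration} supplying the concentration) — shows that $\|v(t)\|_{L^2}$ cannot stay bounded, contradicting the energy identity above. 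This last step is the heart of the proof; everything preceding it is soft.
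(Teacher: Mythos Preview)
Your subcritical case ($\|u_0\|_{L^2}<\|Q\|_{L^2}$) is fine and in fact cleaner than the paper's: the paper instead uses energy coercivity via the sharp torus Gagliardo--Nirenberg inequality (Lemma~\ref{lem:energy}(i)) to bound the norm directly, whereas you just read off a contradiction from Theorem~\ref{thm:concentration}. Both work.

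The critical case is where your proposal goes astray. You correctly arrive at uniform bounds on $\|v(t)\|_{L^2}$ and $\|n(t)+|u(t)|^2\|_{L^2}$ (minor caveat: the sharp Gagliardo--Nirenberg inequality on $\T^2$ carries an additive lower-order term, cf.\ Theorem~\ref{thm:gn}, so the Schr\"odinger energy is only bounded below by a constant, not by zero --- but this is harmless for what follows). The gap is your endgame: you invoke rigidity of the Gagliardo--Nirenberg optimizers, a profile decomposition, and then a ``quantitative analysis'' of the interaction between the collapsing source and finite propagation speed, adapting Glangetas--Merle --- none of which you actually carry out, and which you yourself flag as ``the heart of the proof.'' This is a genuine missing argument, and it is also far harder than necessary.

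The paper closes the critical case by a much softer observation that uses only what you already have. From the uniform bound on $\|v(t)\|_{L^2}$ and $n_t=-\nabla v+\hat n_1(0)$, integrate in time to get $\|n(t)\|_{H^{-1}}$ bounded on $[0,T)$. Combined with the bound on $\|n(t)+|u(t)|^2\|_{L^2}$, this gives
\[
\big\|\,|u(t)|^2\,\big\|_{H^{-1}}\le \|n(t)\|_{H^{-1}}+\big\|n(t)+|u(t)|^2\big\|_{H^{-1}}\lesssim_T 1.
\]
On the other hand, Theorem~\ref{thm:concentration} together with $\|u(t)\|_{L^2}^2\equiv\|Q\|_{L^2}^2$ forces $|u(t_n,\cdot-y_n)|^2\rightharpoonup \|Q\|_{L^2}^2\,\delta_0$ in the sense of distributions. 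Boundedness in $H^{-1}$ then forces the weak limit to lie in $H^{-1}(\T^2)$, i.e.\ $\delta_0\in H^{-1}(\T^2)$, which is false in two dimensions. That is the whole contradiction --- no profile decomposition, no $Q$-rigidity, no light-cone analysis is needed.
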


Corollary \ref{cor:nonexistence} can be derived by using the same argument as Glangetas and Merle with a sharp Gagliardo-Nirenberg inequality on $\T^2$ by Ceccon and \mbox{Montenegro} \cite{CM08}.
However, for the proof of Theorem \ref{thm:concentration}, it is not sufficient by itself to replace the sharp Gagliardo-Nirenberg inequality on $\R^2$ with that on $\T^2$.
This is because the terms $||\nabla u||_{L^2}^2$ and $||u||_{L^2}^2$ appearing in the Gagliardo-Nirenberg inequality on $\T^2$ have different scalings.
Therefore, we use a concentration compactness argument and split $u$ in many pieces so that we can use the Gagliardo-Nirenberg inequality on $\R^2$.

This paper is organized as follows.
In section \ref{formulation}, we formulate the perturbed equation which we have to solve to construct a finite time blow-up solution.
In section \ref{regularization}, we construct an approximate solution by regularizing the perturbed equation.
In section \ref{modified}, we introduce a modified energy and derive an a priori estimate for the approximate solution.
This estimate will allow us to construct a solution to the perturbed equation. 
Also, the idea for multi-point blowup is given in section \ref{modified}.
In section \ref{sec:concentration}, we prove Theorem \ref{thm:concentration} and Corollary \ref{cor:nonexistence}.
In the appendix of this paper, for the readers convenience, we give a brief sketch of the proof of the modified concentration compactness lemma which we will use for the proof of Theorem \ref{thm:concentration}.

We define some notations which we use in the following.
We denote the Fourier series of $u(t,x)$ in the spatial variable as
\begin{eqnarray*}
u(t,x)=\sum_{m\in\Z^2}e^{imx}\hat{u}(t,m).
\end{eqnarray*}
We define the Sobolev spaces $H^k(\T^2)$ for $k\in \R$ as
\begin{eqnarray*}
H^k(\T^2)&:=&\{u\in \mathcal{D}'(\T^2)\ |\ ||u||_{H^k}<\infty\},\\
||u||_{H^k}^2&:=&\sum_{m\in\Z^2}\<m\>^{2k}|\hat{u}(m)|^2,
\end{eqnarray*}
where $\<x\>:=(1+|x|^2)^{1/2}$.
We write $A\lesssim B$ to denote the estimate $A\leq CB$ with a constant $C>0$, which may depend on some parameters in a harmless way, and write $A\sim B$ if $A\lesssim B\lesssim A$.
We use the notations like $\lesssim _{\ep ,\la}$ when we need to emphasize the dependence of constants on some parameters.


\section{Formulation}\label{formulation}

First of all, we recall the result by Burq, G\'erard, and Tzvetkov~\cite{BGT03}, which constructed blow-up solutions to \eqref{eq:NLS} on $\T^2$.
\eqref{eq:NLS} on $\R^2$ has a family of explicit blow-up solutions $\{ \tilde{R}_\la \} _{\la >0}$ which blow up at $t=T$, where
\[ \tilde{R}_{\la}(t,x)=\frac{1}{\la(T-t)}e^{i\(\frac{1}{\la^2(T-t)}-\frac{|x|^2}{4(T-t)}\)}Q\(\frac{x}{\la(T-t)}\) ,\]
and $Q$ is given in (\ref{eq:Q}).
Let $\psi\in C_{0,r}^{\infty}(\R^2)$ be such that $0\leq \psi \leq 1$, supp\,$\psi\subset \{|x|<2\}$ and $\psi(x)=1$ for $|x|<1$, then the function $\psi \tilde{R}_\la (t)$, which is restricted on a ball $B(0,2)\subset [-\pi ,\pi ]^2$, can be regarded as a function on $\T^2$.
Consider the function
\[ u(t,x)=\psi (x) \tilde{R}_\la (t,x)+v(t,x)\]
with $v:[0,T]\times \T^2\to \C$.
Then, $u$ is a blow-up solution to \eqref{eq:NLS} on $\T^2$ if $v$ solves the equation
\begin{equation}\label{eq:BGTs}
\left\{
\begin{aligned}
(i \d_t+\De) v = &~Q_{2,3}(v)-\psi ^2(\tilde{R}_\la ^2\bar{v}+|\tilde{R}_\la |^2v)\\
&+(1-\psi ^2)\psi |\tilde{R}_{\la}|^2\tilde{R}_\la -2\nabla\psi\nabla\tilde{R}_{\la}-\De\psi\tilde{R}_{\la},\\
v(t)\to 0\quad &\text{as }t\to T,
\end{aligned}
\right. 
\end{equation}
where quadratic and cubic terms with respect to $v$ have been written as $Q_{2,3}(v)$.

Applying the fixed point argument to the associated integral equation, one can solve \eqref{eq:BGTs}, for example, in $H^2(\T^2)$.
First, notice that the external force in \eqref{eq:BGTs} decays exponentially as $t\to T$, namely
\[ \Sobn{(1-\psi ^2)\psi |\tilde{R}_{\la}(t)|^2\tilde{R}_\la (t)-2\nabla\psi\nabla\tilde{R}_{\la}(t)-\De\psi\tilde{R}_{\la}(t)}{2}\lesssim e^{-\frac{\delta}{\la (T-t)}}\]
for some $\de >0$.
Thus, we can expect that the solution $v$ also decays exponentially as $t\to T$.

This decay of \emph{exponential} order is essential for the treatment of the linear terms $\psi ^2(\tilde{R}_\la ^2\bar{v}+|\tilde{R}_\la |^2v)$ in the fixed point argument.
To see this, we assume $\Lebn{v(t)}{2}\sim e^{-\frac{\mu}{\la (T-t)}}$ for some $\mu >0$ and consider the estimate for the $L^2$ norm of the Duhamel integral term, then
\begin{eqnarray*}
\Lebn{\int _t^Te^{i(t-s)\De}\left[ (\psi \tilde{R}_\la )^2v\right] (s)\,ds}{2}&\leq &\int _t^T \Lebn{\psi \tilde{R}_\la(s)}{\I}^2\Lebn{v(s)}{2}\,ds\\
&\sim &\int _t^T \frac{1}{\la ^2(T-s)^2}e^{-\frac{\mu}{\la (T-s)}}\,ds\\
&= &\frac{1}{\mu \la}e^{-\frac{\mu}{\la (T-t)}}\sim \frac{1}{\mu \la}\Lebn{v(t)}{2}.
\end{eqnarray*}
From the above estimate, the linear terms seem to be harmless (at least in $L^2$) whenever $\mu \la$ is sufficiently large.
In fact, under the assumption that $\la$ is sufficiently large, Burq, G\'erard, and Tzvetkov obtained an exponentially decaying solution by performing the fixed point argument in the space $C([0,T);H^2)$ with an appropriate weight in $t$ which grows exponentially as $t\to T$.
Note that any \emph{polynomial} decay in $T-t$ of solution will not be sufficient for us to close the fixed point argument.

Let us return to the Zakharov system \eqref{eq:Z} and take the same approach as \eqref{eq:NLS}.
Let $(P_{\la},N_{\la}):\R ^2\to \R ^2$ be a radially symmetric solution of
\begin{equation}\label{eq:GM}
\left\{
\begin{aligned}
&-\De P_{\la} + P_{\la} = N_{\la}P_{\la},\\
& \la^2(r^2\d_{rr}N_{\la}+6r\d_rN_{\la}+6N_{\la})-\De N_{\la}= \De |P_{\la}|^2,
\end{aligned}
\right.
\end{equation}
where $r=|x|$.
Then it is easy to check that $(u,n)$ defined as
\begin{eqnarray*}
u(t,x)&=&\frac{1}{\la(T-t)}e^{i\(\frac{1}{\la^2(T-t)}-\frac{|x|^2}{4(T-t)}\)}P_{\la}\(\frac{x}{\la(T-t)}\),\\
n(t,x)&=&\(\frac{1}{\la(T-t)}\)^2N_{\la}\(\frac{x}{\la(T-t)}\)
\end{eqnarray*}
is a solution of \eqref{eq:Z} in $\R^2$ which blows up as $t\to T$.
It was shown by Glangetas and Merle \cite{GM94} that for $\la >0$ sufficiently small the equation \eqref{eq:GM} actually has a solution with the following properties.
\begin{Prop}[\cite{GM94}]\label{prop:gm}
There exists a family of radially symmetric solutions $\{ (P_\la ,N_\la )\} _{0<\la \ll 1}$ to \eqref{eq:GM} such that $(P_\la ,N_\la )\to (Q,-Q^2)$ in $H^1(\R ^2) \times L^2(\R ^2)$ as $\la \to 0$.
Further, $(P_{\la}, N_{\la})\in H^k\times H^k$ for all $k\in \mathbb{N}\cup \{ 0\}$ and
\begin{equation*}
|P_{\la}^{(k)}(x)|\lesssim _ke^{-\de |x|},\quad |N_{\la}^{(k)}(x)|\lesssim _k \<x\>^{-(3+k)}
\end{equation*}
for some $\de >0$.
\end{Prop}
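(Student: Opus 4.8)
\emph{Sketch of the argument.}
This is the theorem of Glangetas and Merle~\cite{GM94}. To prove it I would construct $(P_\la,N_\la)$ for $0<\la\ll1$ as a perturbation of the formal $\la=0$ profile. Putting $\la=0$ in the second equation of \eqref{eq:GM} gives $-\Delta N_0=\Delta|P_0|^2$, which among radial functions decaying at infinity forces $N_0=-|P_0|^2$; inserting this into the first equation leaves exactly the ground-state equation \eqref{eq:Q}, so $(P_0,N_0)=(Q,-Q^2)$. The plan is to show this persists for small $\la>0$, the corrections being controlled in weighted Sobolev spaces of radial functions. (One could instead construct $(P_\la,N_\la)$ variationally, as a critical point of a functional whose Euler--Lagrange equations are \eqref{eq:GM}, and extract the $\la\to0$ limit by a concentration--compactness argument; the perturbative route makes the decay and regularity claims more transparent.)

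The algebraic heart of the matter is to eliminate $N_\la$. Writing $L_1:=r^2\d_{rr}+6r\d_r+6=(r\d_r+2)(r\d_r+3)$, the second equation of \eqref{eq:GM} reads $(\la^2L_1-\Delta)N_\la=\Delta|P_\la|^2$, and the substitution $N_\la=-|P_\la|^2+\ti N_\la$ turns it into $(\la^2L_1-\Delta)\ti N_\la=\la^2L_1(|P_\la|^2)$, formally $\ti N_\la=\la^2(\la^2L_1-\Delta)^{-1}L_1(|P_\la|^2)$. The first and, I expect, hardest task is to control $(\la^2L_1-\Delta)^{-1}$ on radial functions, mapping into a space that encodes $\<x\>^{-3}$-type decay together with two derivatives, with bounds that do not degrade too badly as $\la\to0$. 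This is a genuine singular perturbation: because of the $r^2$ growth of its coefficients, $\la^2L_1$ is \emph{not} a relatively bounded perturbation of $-\Delta$, so a naive Neumann series fails and the $\<x\>^{-3}$ tail is a non-perturbative effect; moreover $\la^2L_1-\Delta$ has a regular singular point at $r=\la^{-1}$ (where the leading coefficient $\la^2r^2-1$ vanishes), through which the selected solution must be tracked. One matches the inner region $|x|\lesssim\la^{-1}$, where $-\Delta$ dominates, with the outer region $|x|\gtrsim\la^{-1}$, where the equation reduces to $L_1N\approx0$ and the radial homogeneous solutions are $\sim r^{-2}$ and $\sim r^{-3}$; using the identity $\int_{\R^2}L_1g\,dx=0$ for nice radial $g$ (which is precisely the solvability condition $\int\Delta|P_\la|^2\,dx=0$ and removes the logarithmic obstruction of $-\Delta$ on $\R^2$), one shows that the decaying solution produced by the construction behaves like $r^{-3}$ at infinity.

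Granting this, substituting $N_\la=N_\la[P_\la]=-|P_\la|^2+\ti N_\la[P_\la]$ into the first equation of \eqref{eq:GM} produces a single equation for $P_\la$ which is a small perturbation (of size $O(\la^2)$, up to the resolvent losses) of \eqref{eq:Q}, with linearization $L_+:=-\Delta+1-3Q^2$ at $Q$. The operator $L_+$ is invertible on the radial subspace of $H^2(\R^2)$: its kernel on $\R^2$ is spanned by $\d_{x_1}Q,\d_{x_2}Q$, which are not radial, and on the radial sector $L_+$ has a single negative eigenvalue and no kernel. The implicit function theorem — equivalently a contraction mapping near $Q$ in a radial, exponentially weighted $H^k$ space — then yields, for $\la$ small, a solution $P_\la\to Q$ as $\la\to0$, and hence $N_\la=-|P_\la|^2+\ti N_\la\to-Q^2$, i.e.\ $(P_\la,N_\la)\to(Q,-Q^2)$ in $H^1(\R^2)\times L^2(\R^2)$.

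It remains to read off the qualitative properties. As \eqref{eq:GM} is a system of radial ODEs with smooth coefficients, interior elliptic regularity away from $r=\la^{-1}$ together with the regularity at $r=\la^{-1}$ built into the construction gives $(P_\la,N_\la)\in H^k\times H^k$ for all $k$ by bootstrapping: the first equation makes $P_\la$ two derivatives smoother than $N_\la P_\la$, the second makes $N_\la$ two derivatives smoother than $\Delta|P_\la|^2+\la^2L_1N_\la$, and one iterates. The exponential decay $|P_\la^{(k)}(x)|\lesssim_k e^{-\de|x|}$ follows because the first equation is of Schr\"odinger type with potential tending to $1>0$ at infinity (since $N_\la(x)\to0$), via a comparison/Agmon estimate for $P_\la$ and for its derivatives, which satisfy similar equations. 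The polynomial decay $|N_\la^{(k)}(x)|\lesssim_k\<x\>^{-(3+k)}$ is exactly the far-field behaviour of $(\la^2L_1-\Delta)^{-1}$ found above: for $|x|\gg1$ the source $\Delta|P_\la|^2$ is exponentially small, so $N_\la$ is governed by the $r^{-3}$ homogeneous behaviour of $\la^2L_1-\Delta$, with the decay of derivatives obtained by differentiating the radial equation. The bulk of the work, I expect, lies in the uniform-in-$\la$ analysis of $\la^2L_1-\Delta$ and of the $\<x\>^{-3}$ tail of $N_\la$.
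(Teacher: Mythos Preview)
The paper does not prove this proposition at all: it is simply quoted from Glangetas and Merle~\cite{GM94}, with no argument given beyond the attribution. So there is nothing in the paper's ``proof'' to compare your proposal against.

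That said, your sketch is a reasonable outline of a perturbative construction and broadly in the spirit of what \cite{GM94} does. A couple of remarks. First, the actual Glangetas--Merle argument is somewhat more hands-on than a clean implicit-function-theorem application: they work directly with the radial ODE system, analyzing the operator $\la^2 L_1-\Delta$ on weighted radial spaces and performing a Lyapunov--Schmidt-type reduction; your identification of the singular-perturbation nature of $\la^2 L_1-\Delta$ (the turning point at $r=\la^{-1}$, the failure of relative boundedness) correctly locates where the real work lies. Second, your claim that the solvability condition $\int_{\R^2}L_1 g\,dx=0$ ``removes the logarithmic obstruction'' is the right intuition but would need to be made precise in the matched-asymptotics step; this is genuinely delicate and is where \cite{GM94} spends most of its effort. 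Finally, the decay $|N_\la^{(k)}(x)|\lesssim_k\langle x\rangle^{-(3+k)}$ with the constants uniform in small $\la$ (which is what the present paper actually uses) requires tracking the $\la$-dependence carefully through the far-field analysis; your sketch acknowledges this but does not indicate how uniformity would be obtained.
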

These blow-up solutions are similar to the solutions $\tilde{R}_\la$ of \eqref{eq:NLS}, but different from them in the following two points:\\
--- A large $\la$ is not allowed.\\
--- The solutions for the wave part decay only polynomially in $t$.

From time reversal symmetry, it suffices to consider solutions which blow up backward in time at $t=0$.
For small $\la >0$, let
\begin{eqnarray*}
\tilde{U}_{\la}(t,x)&:=&\frac{1}{\la t}e^{-i\(\frac{1}{\la^2t}-\frac{|x|^2}{4t}\)}P_{\la}\(\frac{x}{\la t}\),\\
\tilde{W}_{\la}(t,x)&:=&\frac{1}{\( \la t\)^2}N_{\la}\(\frac{x}{\la t}\)
\end{eqnarray*}
be the blow-up solution of (\ref{eq:Z}) in $\R^2$ constructed in \cite{GM94}.
With the cut function $\psi$ defined above, set
\begin{eqnarray*}
U_{\la}(t,x)&:=&\psi(x)\tilde{U}_{\la}(t,x),\\
W_{\la}(t,x)&:=&\psi(x)\tilde{W}_{\la}(t,x),\quad (t,x)\in \R\times [-\pi ,\pi ]^2\simeq \R \times \T^2 .
\end{eqnarray*}

We construct a blow-up solution of the form $(U_{\la}+u,W_{\la}+w)$, where $(u,w)$ does not blow up in the energy space as $t\rightarrow 0$.
Assuming that $(U_{\la}+u,W_{\la}+w)$ solves (\ref{eq:Z}), we obtain
\begin{equation}\label{eq:BGT}
\left\{
\begin{aligned}
&(i \d_t+\De) u = uw + (W_{\la}u+U_{\la}w)+\((\psi-1)\psi \tilde{U}_{\la}\tilde{W}_{\la}-2\nabla\psi\nabla\tilde{U}_{\la}-\De\psi\tilde{U}_{\la}\) ,\\
& (\d_{tt}-\De) w= \De |u|^2 +\De(\bar{U}_{\la}u+U_{\la}\bar{u})+F_\la ,
\end{aligned}
\right.
\end{equation}
where
\begin{align*}
&F_\la :=\De(|U_{\la}|^2)-\psi\De(|\tilde{U}_{\la}|^2)+2\nabla\psi\nabla\tilde{W}_{\la}+\De\psi\tilde{W}_{\la}\\
&=(\psi -1)\psi \De (|\tilde{U}_{\la}|^2)+2\nabla (\psi ^2)\nabla (|\tilde{U}_{\la}|^2)+\De (\psi ^2)|\tilde{U}_{\la}|^2+2\nabla\psi\nabla\tilde{W}_{\la}+\De\psi\tilde{W}_{\la}.
\end{align*}

Here, the first difficulty arises due to the fact that the external force term $F_\la(t)$ only decays polynomially.
We can thus expect only polynomial decay for $w$, then the same for $U_\la w$ in the Schr\"odinger part, then the same for $u$, which would not be enough to remove the singularity in the term $W_\la u$.

The idea to overcome this difficulty is to decompose $w$ into polynomially decaying part and exponentially decaying part.
Note that the slowly-decaying external force $F_\la (t)$ is restricted outside a ball $B(0,1)$.
The finite speed of propagation then suggests that the slowly-decaying part of $w(t)$ also vanishes around the origin for a short time.
Since $U_\la$ has an exponential decay outside a neighborhood of the origin, we can still expect the exponential decay for the product $U_\la w$.

To make this argument rigorous, let $Z_{\la ,a}(t,x)$ be the solution of the following inhomogeneous linear wave equation for $a\in \R$:
\begin{equation}\label{eq:inhomwave}
\left\{
\begin{aligned}
&(\d_{tt}-\De) Z_{\la ,a}= F_\la ,\\
&Z_{\la ,a}(0,x)=0,\quad \d_tZ_{\la ,a}(0,x)=a\psi (x)(1-\psi (x)).
\end{aligned}
\right.
\end{equation}
Note that $Z_{\la ,a}$ is explicitly written as
\begin{eqnarray*}
Z_{\la ,a}(t,x)&=&\frac{\sin (t|\nabla |)}{|\nabla|}\big[ a\psi (1-\psi )\big] (x)-\int _0^t\frac{\sin ((t-s)|\nabla |)}{|\nabla|}F_\la (s,x)\,ds\\
&=:&\Psi _a(t,x)+Z_\la (t,x).
\end{eqnarray*}
A direct calculation using Proposition~\ref{prop:gm} shows that $Z_{\la ,a}\in C^1([0,\I );H^k(\T ^2))$ for any $k\geq 0$ and
\begin{equation}\label{est:Z_la}
\sup _{0<t<T}\( t^{-1}\norm{Z_{\la ,a}(t)}_{H^k(\T^2)}+\norm{\d_tZ_{\la ,a}(t)}_{H^k(\T^2)}\) \lesssim _{k,T,\la ,a}1
\end{equation}
for $T>0$.
Moreover, both the external force term and the initial data in \eqref{eq:inhomwave} vanish in a ball $B(0,1)$, which together with the finite speed of propagation yields that $Z_{\la ,a}(t,x)\equiv 0$ on a ball $B(0,1/2)$ for $0<t<1/2$.
Actually any initial data that vanish around the origin may be sufficient for the fixed point argument, but we have selected the above ones for another reason to be mentioned below.

We shall construct a blow up solution of the form $(U_{\la}+u,W_{\la}+Z_{\la ,a}+z)$, where $(u,z)$ converges to $0$ as $t\rightarrow 0$, solving
\begin{equation}\label{eq:BGT2}
\left\{
\begin{aligned}
&(i \d_t+\De) u = uz + (W_{\la}+Z_{\la ,a})u+U_{\la}z\\&\qquad \qquad \qquad +\(U_\la Z_{\la ,a}+(\psi-1)\psi \tilde{U}_{\la}\tilde{W}_{\la}-2\nabla\psi\nabla\tilde{U}_{\la}-\De\psi\tilde{U}_{\la}\),\\
& (\d_{tt}-\De) z= \De |u|^2 +\De(\bar{U}_{\la}u+U_{\la}\bar{u}).
\end{aligned}
\right.
\end{equation}
We notice that the external force in the Schr\"odinger part decays exponentially as $t\to 0$.

It is easy to see that a solution $(u,z)$ to the above problem satisfies
\[ \int _{\T^2}z_t(t,x)\,dx=c\hat{z}_t(t,0)\equiv 0\]
for all $t$.
Thus, $|\nabla|^{-1}z_t$ can be defined by $\sum_{m\neq (0,0)}e^{imx}|m|^{-1}\hat{z}_t(t,m)$.
Set
\begin{eqnarray*}
r=z+i|\nabla|^{-1}z_t.
\end{eqnarray*}
Then, if $(u,z)$ solves \eqref{eq:BGT2}, $(u,r)$ satisfies
\begin{equation}\label{eq:KM}
\left\{
\begin{aligned}
&(i \d_t+\De) u = u\Re r + (W_{\la}+Z_{\la ,a})u+U_{\la}\Re r\\&\qquad \qquad \qquad +\(U_\la Z_{\la ,a}+(\psi-1)\psi \tilde{U}_{\la}\tilde{W}_{\la}-2\nabla\psi\nabla\tilde{U}_{\la}-\De\psi\tilde{U}_{\la}\),\\
& (i\d_t-|\nabla|)r = |\nabla| \( |u|^2 +\bar{U}_{\la}u+U_{\la}\bar{u}\) .
\end{aligned}
\right.
\end{equation}
Since $z$ is real valued, we can recover the solution of \eqref{eq:BGT2} from that of \eqref{eq:KM} by $z:=\Re r$.
In this case $z_t=|\nabla |(\Im r)$ holds, and $(z,z_t)\in C((0,T];L^2(\T ^2;\R)\times \hat{H}^{-1}(\T^2 ;\R))$ if and only if $r\in C((0,T];L^2(\T^2 ;\C))$.

We will construct a local solution to this problem \eqref{eq:KM} that decays exponentially in $H^1(\T^2)\times L^2(\T^2)$ as $t\to 0$.
\begin{Thm}\label{Thm:perturb}
For any $a\in \R$ and sufficiently small $\la >0$, there exists $T=T(\la ,a)>0$ such that the equation \eqref{eq:KM} has a solution $(u,r)\in C((0,T],H^1(\T^2)\times L^2(\T^2))$ which decays exponentially as $t\to 0$.
\end{Thm}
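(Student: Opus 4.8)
The plan is to recast \eqref{eq:KM} as the coupled system of integral equations
\[
u(t)=-i\int_0^t e^{i(t-s)\De}\G_1(s)\,ds,\qquad r(t)=-i\int_0^t e^{-i(t-s)|\nabla|}\G_2(s)\,ds,
\]
with $\G_1,\G_2$ the right-hand sides of \eqref{eq:KM}, and to solve it in a space of functions on $(0,T]$ carrying an exponentially growing temporal weight, i.e.\ in which $\sup_{0<t\le T}e^{\th/t}\bigl(\norm{u(t)}_{H^1}+\norm{r(t)}_{L^2}\bigr)<\I$ for a suitable $\th>0$. Such a weight can be sustained because, by Proposition~\ref{prop:gm} together with the fact that $Z_{\la,a}$ vanishes near the origin for small $t$, the external force in the Schr\"odinger equation of \eqref{eq:KM}, namely $U_\la Z_{\la,a}+(\psi-1)\psi\ti U_\la\ti W_\la-2\nabla\psi\nabla\ti U_\la-\De\psi\ti U_\la$, is $O(e^{-\de_0/(\la t)})$ in every $H^k$ for some $\de_0>0$; its Duhamel integral is then again $O(e^{-\th/t})$ once $\th<\de_0$, and this seeds the exponential decay of the solution.

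Two features of \eqref{eq:KM} prevent one from simply reproducing the scheme of \cite{BGT03}. First, the wave nonlinearity is $\De(|u|^2)$ and the corresponding coupling term is $\De(\bar U_\la u+U_\la\bar u)$, so estimating $r$ in $L^2$ via the half-wave Duhamel formula costs one derivative relative to $u\in H^1$; the same loss reappears in the $H^1$ estimate of the quadratic term $u\,\Re r$ and of the coupling $U_\la\Re r$. Second, since $\la$ must be small (Proposition~\ref{prop:gm}), the factor gained from the weight against the potential $(W_\la+Z_{\la,a})u$, whose size near the origin is $\sim(\la t)^{-2}$, is only of order $(\th\la)^{-1}$, which --- unlike in the Schr\"odinger case, where $\la$ can be chosen large --- is not small. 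The contraction therefore cannot be closed directly, and an a~priori estimate independent of these two difficulties is needed.

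I would obtain it as follows. First regularize \eqref{eq:KM} by inserting frequency projections $P_{\le N}$ into the unknowns and the nonlinearity. For fixed $N$ the loss of derivative disappears, all the truncated nonlinear and coupling operators are bounded, and the $\sim(\la t)^{-2}$ potential $(W_\la+Z_{\la,a})u$ --- being self-adjoint --- can be absorbed into a linear flow that is unitary on $L^2$; the integral system with zero data at $t=0$ is then solved by a routine contraction in the weighted space at the $L^2\times L^2$ level, and by the truncation the solution $(u^N,r^N)$ lies in $C((0,T_N],H^1\times L^2)$ and decays exponentially at $t=0$ (with constants and $T_N$ depending on $N$). The key step is then a bound for $\norm{u^N(t)}_{H^1}+\norm{r^N(t)}_{L^2}$ that is uniform in $N$ on a common interval $(0,T]$, $T=T(\la,a)$. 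This is where the \emph{modified energy} enters: writing $z=\Re r$ so that $n=W_\la+Z_{\la,a}+z$, one forms
\[
\mathcal{E}_\la(t):=\norm{\nabla u}_{L^2}^2+\tfrac12\bigl(\norm{z}_{L^2}^2+\norm{|\nabla|^{-1}z_t}_{L^2}^2\bigr)+\int_{\T^2}\bigl(W_\la+Z_{\la,a}+z\bigr)|u|^2\,dx+(\text{corrections coupling }u\text{ to the background}),
\]
and checks that, exactly as in the conservation of \eqref{eq:energy}, the terms responsible for the loss of derivative cancel in $\tfrac{d}{dt}\mathcal{E}_\la$; the correction terms are chosen so as to cancel the most singular contributions of the background $(U_\la,W_\la,Z_{\la,a})$. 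What survives of that background is singular in $t$ (powers of $t^{-1}$) but multiplies quantities that one is simultaneously bootstrapping to be exponentially small, so that, together with the differential inequality for $\norm{u(t)}_{L^2}^2$ (in which the real potentials and the self-interaction $u\,\Re r$ do not appear), a continuity argument yields $\mathcal{E}_\la(t)+\norm{u(t)}_{L^2}^2\lesssim e^{-2\th/t}$ on $(0,T]$, hence the desired uniform bound, since $\mathcal{E}_\la$ is comparable to $\norm{\nabla u}_{L^2}^2+\norm{z}_{L^2}^2+\norm{|\nabla|^{-1}z_t}_{L^2}^2$ modulo the small quantity $\norm{u}_{L^2}$. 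Finally, the difference $(u^N-u^M,r^N-r^M)$ satisfies a system that is linear in the difference with coefficients controlled by this uniform bound; a contraction for it at a lower regularity shows that $(u^N,r^N)$ is Cauchy, and its limit $(u,r)$ solves \eqref{eq:KM}, belongs to $C((0,T],H^1\times L^2)$, and decays exponentially.

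The main obstacle is the construction of $\mathcal{E}_\la$ and the closing of the bootstrap: one must arrange the correction terms so that (i) the derivative-loss terms still cancel in $\tfrac{d}{dt}\mathcal{E}_\la$ in the presence of the extra linear coupling with the background that is absent from the pure Zakharov energy \eqref{eq:energy}, and (ii) the inevitable $t$-singular factors generated by $(U_\la,W_\la,Z_{\la,a})$ enter the differential inequality only against exponentially small quantities, so that the smallness of $\la$ is harmless. A secondary difficulty, specific to the torus, is that the quadratic interactions $u\,\Re r$ and $|\nabla|(|u|^2)$, and the corresponding difference estimates, have to be controlled at the endpoint regularity $H^1\times L^2$, which requires the Strichartz (or $X^{s,b}$) estimates available on $\T^2$ rather than Sobolev product inequalities alone, and must be made compatible with the exponential weight.
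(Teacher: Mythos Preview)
Your overall architecture --- regularize, derive an $\ep$- (or $N$-) uniform a~priori bound via a modified energy, pass to the limit --- is the paper's strategy, and you have correctly located the two obstructions (derivative loss; the singular real potential $W_\la$ when $\la$ must be small). Your observation that including $\int (W_\la+Z_{\la,a})|u|^2$ in the energy cancels the commutator term $2\Im\int\nabla W_\la\,u\,\nabla\bar u$ arising in $\frac{d}{dt}\|\nabla u\|_{L^2}^2$ is right.

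There is, however, a concrete gap. After that cancellation your modified energy still produces the term $\int(\partial_t W_\la)|u|^2$, and $\|\partial_t W_\la\|_{L^2}\sim(\la t)^{-2}$, so this is $\lesssim(\la t)^{-2}\|u\|_{L^4}^2\lesssim(\la t)^{-2}\|u\|_{L^2}\|u\|_{H^1}$. Likewise, coercivity of your energy fails unless $|\int W_\la|u|^2|\lesssim(\la t)^{-1}\|u\|_{L^2}\|u\|_{H^1}$ is small compared to $\|\nabla u\|_{L^2}^2$. Both require that $\|u(t)\|_{L^2}$ decay \emph{polynomially faster} in $t$ than $\|\nabla u(t)\|_{L^2}$; a single exponential weight $e^{\th/t}$ on all norms cannot close this, because in $\frac{d}{dt}\big[e^{2\th/t}\mathcal E_\la\big]$ the good term from the weight is $O(t^{-2})$ while the bad one is $O((\la t)^{-2})$ or worse, and $\la$ is small. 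The paper resolves this by working in $H^3\times H^2$ with a norm $\mathcal H$ carrying \emph{graded polynomial weights} (e.g.\ $t^{-4}$ on $\|u\|_{L^2}$, none on $\|u\|_{\dot H^3}$, and intermediate powers on $r$), so that each singular background factor $(\la t)^{-m}$ meets an extra $t^m$ from interpolation. This hierarchy is the missing ingredient in your scheme, and the sentence ``$t$-singular factors\dots enter only against exponentially small quantities'' does not substitute for it.

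Two further differences are worth noting. First, the paper regularizes by parabolic viscosity $i\ep\De^2$ rather than frequency truncation; the resulting dissipative terms $2\ep e^{\mu/(\la t)}\|u\|_{\dot H^5}^2$ and $2\ep t^{-4/3}e^{\mu/(\la t)}\|r\|_{\dot H^4}^2$ are actively used to absorb error terms from the time derivative of the correction $\int(u+U_\la)\De\bar u\,\De\Re r$ (the correction itself is at the $H^2$ level, not $H^1$). With $P_{\le N}$ you lose that absorption mechanism and must replace it. Second, the limit is taken via Ascoli--Arzel\`a (compactness in $H^1\times L^2$ coming from the uniform $H^3\times H^2$ bound), not by a Cauchy argument at low regularity; your difference estimate at $H^1\times L^2$ would run into the same product/endpoint issues you flag at the end.
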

Here, a solution of \eqref{eq:KM} means a distributional solution of the associated integral equation of \eqref{eq:KM}.

Now, we admit Theorem \ref{Thm:perturb} for a moment and show Theorem~\ref{Thm:blowup}.

\begin{proof}[Proof of Theorem~\ref{Thm:blowup}]
Recall that we have replaced the forward blowup at $t=T$ with the backward blowup at $t=0$.
For given $M>||Q||_{L^2(\R^2)}^2$, we choose $\la >0$ so that $||P_\la ||_{L^2(\R ^2)}^2<M$, which is possible from Proposition~\ref{prop:gm}.
Next, in the following way, we choose $a\in \R$ so that $\( W_{\la}+Z_{\la ,a}\) _t\in \hat{H}^{-1}$.
We first notice that
\[ \hat{\Psi}_{a,t}(t,0)\equiv a[\psi (1-\psi )]\hat{\;}(0)\]
and $[\psi (1-\psi )]\hat{\;}(0)=c\int \psi (1-\psi )\,dx >0$.
We also see from the equation that $[(W_\la +Z_\la )_t]\hat{\;}(t,0)$ is conserved.
Then, we choose $a\in \R$ so that $[(W_\la +Z_\la )_t]\hat{\;}(t,0)+a[\psi (1-\psi )]\hat{\;}(0)=0$.
With this choice of $\la$ and $a$, we set our blow-up solution of \eqref{eq:Z} as $(u,n)=(U_{\la}+v,W_{\la}+Z_{\la ,a}+\Re r)$, where $(v,r)$ is the solution of \eqref{eq:KM} (with $u$ replaced by $v$) obtained in Theorem~\ref{Thm:perturb}.
Note that this solution belongs to the energy space.

It is easily verified by the $L^2$ conservation law for \eqref{eq:Z} and the monotone convergence theorem that
\begin{equation}\label{eq:mct}
\int _{\T^2}|u(t,x)|^2\,dx=\lim _{t\to 0}\int _{\R^2}|\psi (x)\tilde{U}_\la (t,x)|^2\,dx=\int _{\R^2}|P_\la (x)|^2\,dx<M.
\end{equation}
Hence, we have proved (i).

To prove (ii) we claim the following stronger estimates: for $0<t\ll 1$,
\begin{align}
||\nabla u(t)||_{L^2(\T^2)}&\sim t^{-1},\label{est:burate1}\\
||n(t)||_{L^2(\T^2)}&\sim t^{-1},\label{est:burate2}\\
||n_t(t)||_{\hat{H}^{-1}(\T^2)}&\sim t^{-1}.\label{est:burate3}
\end{align}

For \eqref{est:burate1} and \eqref{est:burate2} it is sufficient to consider the main parts $U_\la (t)$ and $W_\la (t)$, respectively.
Similarly to \eqref{eq:mct}, we have
\[ \lim _{t\to 0}(\la t)^2\int _{\T^2}|W_\la (t,x)|^2\,dx=\int _{\R^2}|N_\la (x)|^2\,dx>0,\]
which implies \eqref{est:burate2}.
Also, we have
\begin{align*}
&|\nabla U_\la (t,x)|^2\\
&=\left| \frac{\psi (x)}{(\la t)^2}e^{-i( \frac{1}{\la ^2t}-\frac{|x|^2}{4t})}(\nabla P_\la )(\frac{x}{\la t})+\frac{i\psi (x)x}{2\la t^2}e^{-i( \frac{1}{\la ^2t}-\frac{|x|^2}{4t})}P_\la (\frac{x}{\la t})+(\nabla \psi )\tilde{U}_\la \right| ^2.
\end{align*}
Since
\[ \lim _{t\to 0}(\la t)^2\int _{\T^2}\left| \frac{\psi (x)}{(\la t)^2}(\nabla P_\la )(\frac{x}{\la t})\right| ^2dx=\int _{\R^2}|\nabla P_\la (x)|^2\,dx\]
and
\begin{gather*}
\int _{\T^2}\left| \frac{\psi (x)x}{2\la t^2}P_\la (\frac{x}{\la t})\right| ^2dx\leq (\frac{\la}{2})^2\int _{\R^2}\left| \frac{1}{\la t}\frac{x}{\la t}P_\la (\frac{x}{\la t})\right| ^2dx=(\frac{\la}{2})^2||xP_\la ||_{L^2}^2,\\
\int _{\T^2}|(\nabla \psi )(x)\tilde{U}_\la (t,x)|^2\,dx\leq ||\nabla \psi ||_{L^{\infty}}^2||\tilde{U}_\la (t)||_{L^2}^2=||\nabla \psi ||_{L^{\infty}}^2||P_\la ||_{L^2}^2, 
\end{gather*}
we conclude that
\[ \lim _{t\to 0}(\la t)^2\int _{\T^2}|\nabla U_\la (t,x)|^2\,dx=\int _{\R^2}|\nabla P_\la (x)|^2\,dx>0,\]
which implies \eqref{est:burate1}.

For \eqref{est:burate3}, 
it suffices to consider $(W_\la +Z_{\la ,a})_t(t)$ instead of $n_t(t)$.
We see that
\begin{align*}
(W_\la )_t(t,x)&=-\frac{2\psi (x)}{\la ^2t^3}N_\la (\frac{x}{\la t})-\frac{\psi (x)x}{\la ^3t^4}(\nabla N_\la )(\frac{x}{\la t})=-\psi (x)\nabla \left[ \frac{1}{\la t^2}\frac{x}{\la t}N_\la (\frac{x}{\la t})\right] \\
&=(\nabla\psi )(x) \left[ \frac{1}{\la t^2}\frac{x}{\la t}N_\la (\frac{x}{\la t})\right] -\nabla \left[ \frac{\psi (x)}{\la t^2}\frac{x}{\la t}N_\la (\frac{x}{\la t})\right] .
\end{align*}
Note that the second term is in $\hat{H}^{-1}$, then
\begin{align*}
\norm{(W_\la +Z_{\la ,a})_t(t)}_{\hat{H}^{-1}}=&\norm{\nabla \left[ \frac{\psi (x)}{\la t^2}\frac{x}{\la t}N_\la (\frac{x}{\la t})\right]}_{\hat{H}^{-1}}\\
&+O\( \norm{\nabla\psi \cdot \left[ \frac{1}{\la t^2}\frac{x}{\la t}N_\la (\frac{x}{\la t})\right]}_{L^2}+||(Z_{\la ,a})_t(t)||_{L^2}\) .
\end{align*}
Since $\nabla \psi (x)\equiv 0$ for $|x|<1$, we have
\begin{align*}
t^2\int _{\T^2}\left| (\nabla\psi )(x) \left[ \frac{1}{\la t^2}\frac{x}{\la t}N_\la (\frac{x}{\la t})\right] \right| ^2dx&\leq t^2\int _{\R^2}\left| \frac{1}{\la ^{1/2}t^{3/2}}(\frac{|x|}{\la t})^{\frac{3}{2}}N_\la (\frac{x}{\la t})\right| ^2dx\\
&=\la t \| |x|^{\frac{3}{2}}N_\la \| _{L^2}^2~\to ~0\quad (t\to 0).
\end{align*}
Recalling the definition of the $\hat{H}^{-1}$ norm (Remark~\ref{hatH}), we also have
\[ \lim _{t\to 0}t\norm{\nabla \left[ \frac{\psi (x)}{\la t^2}\frac{x}{\la t}N_\la (\frac{x}{\la t})\right]}_{\hat{H}^{-1}}=\lim _{t\to 0}\norm{\frac{\psi (x)}{\la t}\frac{x}{\la t}N_\la (\frac{x}{\la t})}_{L^2(\T^2)}=\norm{xN_\la}_{L^2(\R^2)},\]
where the first equality follows from the fact that $\frac{\psi (x)}{\la t^2}\frac{x}{\la t}N_\la (\frac{x}{\la t}) \in G$ for all $t>0$, which is verified by observing that $\frac{\psi (x)}{\la ^2t^3}N_\la (\frac{x}{\la t})=:f(|x|)$ is spherically symmetric and $xf(|x|)=\nabla (\int _0^{|x|}rf(r)\,dr+C)$.
This concludes \eqref{est:burate3}.

(iii) follows from a similar argument to the proof of \eqref{est:burate1} and \eqref{est:burate2}.
For instance, 
\begin{align*}
&\int _{\T^2\setminus B(0,r)}\left| \frac{\psi (x)}{(\la t)^2}(\nabla P_\la )(\frac{x}{\la t})\right| ^2dx\leq \int _{\T^2}\left| \frac{\psi (x)(1-\psi (2x/r))}{\la tr}\frac{|x|}{\la t}(\nabla P_\la )(\frac{x}{\la t})\right| ^2dx,
\end{align*}
which goes to $0$ as $t\to 0$ by the dominated convergence theorem, giving the claim for $\nabla u$.
We make a similar argument for $n$ and obtain (iii).
\end{proof}

All we have to do is to solve \eqref{eq:KM}.
However, compared to the case of \eqref{eq:NLS}, there are two major difficulties left:
(i) the loss of one derivative in the equation, and (ii) how to control the linear term $W_\la u$ ($\sim (\la t)^{-2}u$) without assuming $\la$ to be large.

When we construct solutions in the space $H^k\times H^{k-1}$, the loss of one derivative appears in the Schr\"odinger part and prevents us from applying the usual fixed point argument.
We shall employ the method of parabolic regularization to overcome this issue.
This method is also helpful in treating another issue (ii), because the viscosity effect will ease the singularity of $W_\la$ and give an extra small factor $T^{0+}$ to the corresponding term in the estimate.
The details will be discussed in Section~\ref{regularization}.

What is the most important is then the a priori estimate for the approximate solutions constructed via the parabolic regularization.
We meet the difficulties (i) and (ii) here again.

If we use the standard energy estimate, we will have only the estimate of $\frac{d}{dt}\Sobn{u(t)}{k}^2$ in terms of $\Sobn{u(t)}{k}$ and $\Sobn{r(t)}{k}$, which forces us to assume one more regularity for $r(t)$.
To obtain the a priori estimate in $H^k\times H^{k-1}$, we shall introduce a ``modified energy.''
More precisely, we modify the standard energy (the $H^k\times H^{k-1}$ norm of solutions) with harmless terms so that in the estimate of the time derivative of them the term including $\nabla ^kr(t)$ will be canceled (see Section~\ref{modified} for details).
This approach was recently taken by Kwon \cite{Kw08} and by Segata \cite{Se} for the fifth order KdV equation and the fourth order nonlinear Schr\"odinger equations with a derivative in the nonlinear term, respectively.
Note that this kind of modification on energy has a lot of ideas in common with the concept of ``correction terms'' in the context of the $I$-method introduced in a series of papers by Colliander, Keel, Staffilani, Takaoka, Tao.

Concerning (ii), the fact $W_\la (t,x)\in \R$ will be essential.
For instance, when we derive the identity for $\frac{d}{dt}\Lebn{u(t)}{2}^2$ the term corresponding to $W_\la u$ will not appear.
Similarly, in the estimate of $\frac{d}{dt}\Lebn{\nabla ^ku(t)}{2}^2$, there will be the terms like
\begin{equation}\label{w}
\int _{\T^2}\nabla ^lW_\la (t)\nabla ^{k-l}u(t)\nabla ^k\bar{u}(t)\,dx
\end{equation}
for $l=1,2,\dots, k$, but the term corresponding to $l=0$ will vanish.
On the other hand, note that $\Lebn{\nabla ^lW_\la}{\I}\lesssim (\la t)^{-2-l}$.
Then, if $\Lebn{\nabla ^{k-l}u(t)}{2}$ has a decay faster than $t^l\Lebn{\nabla ^ku(t)}{2}$ for each $l=1,2,\dots,k$, we can obtain the extra small factor again and control \eqref{w} by shrinking the time interval.
We will actually construct solutions with such a property, by carefully choosing the weight function in the norm for fixed point argument.
The precise definition of the norm will be given in Section~\ref{regularization}.

\begin{Rem}
In \cite{OzT92}, Ozawa and Tsutsumi proved the local well-posedness of the initial value problem for the Zakharov system on $\R^d$ ($d=1,2,3$) in the space $H^2\times H^1\times L^2$.
They pointed out that the loss of derivative does not occur when the Zakharov system is considered as the system of equations for $\d_tu$ and $n$.
This technique may be a solution to our difficulty (i), but it seems difficult to settle another issue (ii) by this idea.
That is why we employ the method of parabolic regularization.
\end{Rem}


\section{Parabolic Regularization}\label{regularization}

We first look for solutions $(u_{\ep},r_{\ep})$ to a regularized equation
\begin{equation}\label{eq:KMep}
\left\{
\begin{aligned}
&(i \d_t+\De+i\ep\De^2) u = u\Re r + (W_{\la}+Z_{\la,a})u+U_{\la}\Re r\\&\qquad \qquad \qquad +\(U_\la Z_{\la,a}+(\psi-1)\psi \tilde{U}_{\la}\tilde{W}_{\la}-2\nabla\psi\nabla\tilde{U}_{\la}-\De\psi\tilde{U}_{\la}\),\\
& (i\d_t-|\nabla|+i\ep\De^2)r = |\nabla| \( |u|^2 +\bar{U}_{\la}u+U_{\la}\bar{u}\),
\end{aligned}
\right.
\end{equation}
for $\ep >0$ in the space
\[ X_{T_{\ep}}=\left\{ (u,r)\in C((0,T_\ep ];H^3(\T ^2))\times C((0,T_{\ep}];H^2(\T ^2))\;:\; \norm{(u,r)}_{X_{T_{\ep}}}<\I \right\},\]
\begin{eqnarray*}
\norm{(u,r)}_{X_{T_{\ep}}}&=&\sup _{t\in (0,T_{\ep}]} \mathcal{H}[u,r](t),\\
\mathcal{H}[u,r](t)^2&:=&\( e^{\frac{\mu}{2\la t}}\norm{u(t)}_{\dot{H}^3(\T ^2)}\) ^2+\(t^{-4}e^{\frac{\mu}{2\la t}}\norm{u(t)}_{L^2(\T ^2)}\) ^2\\
&&\quad +\( t^{-\frac{2}{3}}e^{\frac{\mu}{2\la t}}\norm{r(t)}_{\dot{H}^2(\T ^2)}\) ^2+\( t^{-\frac{10}{3}}e^{\frac{\mu}{2\la t}}\norm{r(t)}_{L^2(\T^2)}\) ^2 ,
\end{eqnarray*}
where $\mu >0$ is the constant to be given in Lemma~\ref{lem:inhomo}.

\begin{Rem}
Note that by a simple interpolation, we have
\begin{eqnarray*}
\sup_{t\in(0,T_{\ep}]}e^{\frac{\mu}{2\la t}}\(t^{-\frac{8}{3}}||u||_{H^1}+t^{-\frac{4}{3}}||u||_{H^2}+t^{-2}||r||_{H^1}\)\leq C||(u,r)||_{X_{T_{\ep}}}.
\end{eqnarray*}

\end{Rem}

We prepare several lemmas.
\begin{Lem}\label{lem:para}
Let $V_{\ep}(t)u_0$ be the solution of
\begin{eqnarray*}
(i \d_t+\De+i\ep\De^2) u = 0,\ u(0)=u_0.
\end{eqnarray*}
Then, we have
\begin{eqnarray*}
||\int_0^tV_{\ep}(t-s)u(s)\,ds||_{H^k}\lesssim \ep^{-\frac{l}{4}}\int_0^t(t-s)^{-\frac{l}{4}}||u||_{H^{k-l}}.
\end{eqnarray*}
\end{Lem}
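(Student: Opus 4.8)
The plan is to combine the explicit Fourier multiplier representation of $V_\ep(t)$ with a smoothing estimate for the fourth-order parabolic semigroup $e^{-t\ep\Delta^2}$. On the Fourier side, $V_\ep(t)$ acts on $\hat{f}(m)$ by multiplication by $e^{it|m|^2}e^{-\ep t|m|^4}$; in particular $V_\ep(t)$ is bounded on every $H^k(\T^2)$ uniformly in $t\ge 0$, and the decaying factor $e^{-\ep t|m|^4}$ is what supplies the gain of derivatives. The key pointwise bound I would record first is the elementary inequality
\begin{equation*}
\Jbr{m}^{l}e^{-\ep t|m|^4}\lesssim (\ep t)^{-l/4}\qquad (l\ge 0,\ m\in\Z^2,\ t>0),
\end{equation*}
which follows from $\sup_{\rho\ge 0}\rho^{l/4}e^{-\rho}<\infty$ after writing $\Jbr{m}^l\lesssim 1+|m|^l$ and absorbing the low-frequency part ($|m|\lesssim 1$) trivially. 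Strictly this gives $|m|^l e^{-\ep t |m|^4}\lesssim (\ep t)^{-l/4}$ on $|m|\ge 1$ and a harmless $O(1)$ on $|m|\le 1$, so one actually gets the slightly cleaner statement with $\Jbr{m}$ when $l\le$ the Sobolev exponent; for the Duhamel application only $l\le k$ matters, so this is not an issue.

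From here the argument is a one-line Minkowski/Plancherel computation. Writing $I(t)=\int_0^t V_\ep(t-s)u(s)\,ds$, Minkowski's inequality for integrals gives $\norm{I(t)}_{H^k}\le\int_0^t\norm{V_\ep(t-s)u(s)}_{H^k}\,ds$, and then for fixed $s$,
\begin{equation*}
\norm{V_\ep(t-s)u(s)}_{H^k}^2=\sum_{m\in\Z^2}\Jbr{m}^{2k}e^{-2\ep(t-s)|m|^4}|\hat{u}(s,m)|^2\lesssim (\ep(t-s))^{-l/2}\sum_{m\in\Z^2}\Jbr{m}^{2(k-l)}|\hat{u}(s,m)|^2,
\end{equation*}
i.e.\ $\norm{V_\ep(t-s)u(s)}_{H^k}\lesssim \ep^{-l/4}(t-s)^{-l/4}\norm{u(s)}_{H^{k-l}}$, where I have distributed $\Jbr{m}^{2k}=\Jbr{m}^{2l}\cdot\Jbr{m}^{2(k-l)}$ and applied the pointwise bound to the first factor. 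Integrating in $s$ yields exactly the claimed estimate. I would remark that $l$ is any real number in $[0,4)$ if one only wants local integrability of $(t-s)^{-l/4}$ in $s$ near $s=t$; the statement as used in the paper has $l<4$ implicitly, and no such restriction is needed for the pointwise multiplier bound itself.

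There is no serious obstacle here: the only point requiring any care is the correct splitting of $\Jbr{m}^{2k}$ and the handling of the low frequencies in the multiplier bound, both routine. The lemma is essentially the standard ``parabolic smoothing costs a factor $(\ep t)^{-l/4}$ per $l$ derivatives'' statement adapted to the Schrödinger-type symbol $e^{it|m|^2}e^{-\ep t|m|^4}$, the oscillatory factor $e^{it|m|^2}$ playing no role since we work in $L^2$-based spaces.
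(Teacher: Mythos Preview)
Your proof is correct and essentially identical to the paper's: both take the Fourier representation of $V_\ep(t)$, apply Minkowski to bring the $H^k$ norm inside the time integral, split $\Jbr{m}^{2k}=\Jbr{m}^{2l}\Jbr{m}^{2(k-l)}$, and use the elementary multiplier bound $\sup_{m}\Jbr{m}^{l}e^{-\ep(t-s)|m|^4}\lesssim (\ep(t-s))^{-l/4}$. Your additional remarks on the low-frequency contribution and on the restriction $l<4$ for integrability near $s=t$ are fine clarifications but not needed for the argument as used.
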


\begin{proof}
From the definition of $V_{\ep}$, we have
\begin{eqnarray*}
\int_0^tV_{\ep}(t-s)u(s)\,ds=\int_0^t\sum_{m\in\Z^2}e^{imx-i|m|^2(t-s)-\ep|m|^4(t-s)}\hat{u}_m(s)\,ds.
\end{eqnarray*}
Therefore, we have
\begin{eqnarray*}
&&||\int_0^tV_{\ep}(t-s)u(s)\,ds||_{H^k}\\
&\leq &\int_0^t\(\sum_{m\in\Z^2}\<m\>^{2k}e^{-\ep|m|^4(t-s)}|\hat{u}_m(s)|^2\)^{\frac{1}{2}}\,ds\\
&\leq &\int_0^t\sup_{\tilde{m}\in\Z^2}\<\tilde{m}\>^{l}e^{-\ep|\tilde{m}|^4(t-s)}\(\sum_{m\in\Z^2}\<m\>^{2(k-l)}|\hat{u}_m(s)|^2\)^{\frac{1}{2}}\,ds\\
&\lesssim& \ep^{-\frac{l}{4}}\int_0^t(t-s)^{-\frac{l}{4}}||u||_{H^{k-l}},
\end{eqnarray*}
where we have used
\begin{equation*}
\sup_{\tilde{m}\in\Z^2}\<\tilde{m}\>^{l}e^{-\ep|\tilde{m}|^4(t-s)}\lesssim \ep^{-\frac{l}{4}}(t-s)^{-\frac{l}{4}}.\qedhere
\end{equation*}
\end{proof}

We estimate the $L^p$ norms of $\nabla^k U_{\la}$ and $\nabla^k W_{\la}$.
\begin{Lem}\label{lem:UW}
Let $k\geq 0$, $\la$ sufficiently small and $p\in[1,\infty]$.
Then we have
\begin{eqnarray*}
||\nabla^k U_{\la}||_{L^p}&\lesssim& (\la t)^{-k-1+\frac{2}{p}},\\
||\nabla^k W_{\la}||_{L^p}&\lesssim& (\la t)^{-k-2+\frac{2}{p}}.
\end{eqnarray*}
\end{Lem}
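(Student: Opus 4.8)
\textbf{Proof plan for Lemma~\ref{lem:UW}.}
The estimates are pure scaling computations based on the explicit formulas for $\tilde U_\la$ and $\tilde W_\la$ together with the decay bounds for $P_\la$ and $N_\la$ from Proposition~\ref{prop:gm}. The plan is to treat $U_\la=\psi\tilde U_\la$ and $W_\la=\psi\tilde W_\la$ by expanding $\nabla^k$ via the Leibniz rule into a sum of terms $\nabla^{j}\psi\cdot\nabla^{k-j}\tilde U_\la$ (resp.\ $\nabla^{k-j}\tilde W_\la$), $0\le j\le k$. Since $\psi$ and all its derivatives are bounded and compactly supported in $\{|x|<2\}$, it suffices to bound $\|\nabla^{m}\tilde U_\la\|_{L^p(B(0,2))}$ and $\|\nabla^{m}\tilde W_\la\|_{L^p(B(0,2))}$ for $0\le m\le k$ and show each is $\lesssim (\la t)^{-m-1+2/p}$ (resp.\ $\lesssim(\la t)^{-m-2+2/p}$); the $j\ge 1$ terms in the Leibniz expansion only improve the power of $\la t$ because they replace a derivative falling on $\tilde U_\la$ (which costs $(\la t)^{-1}$) with one falling on $\psi$ (which costs nothing), so they are dominated by the worst term $j=0$ for small $\la t$.

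Next I would compute the scaling of $\nabla^m\tilde U_\la$. Writing $\tilde U_\la(t,x)=\frac{1}{\la t}e^{-i(\frac{1}{\la^2 t}-\frac{|x|^2}{4t})}P_\la(\frac{x}{\la t})$, each spatial derivative either hits the phase $e^{i|x|^2/(4t)}$, producing a factor $\frac{x}{2t}$, or hits $P_\la(\frac{x}{\la t})$, producing $\frac{1}{\la t}(\nabla P_\la)(\frac{x}{\la t})$. On the support $|x|<2$ of $\psi$ one has $|x|/(2t)\lesssim (\la t)^{-1}\cdot \la$, so for small $\la$ the phase derivatives are no worse than the $P_\la$-derivatives; more precisely every derivative costs at most $(\la t)^{-1}$ up to a harmless factor. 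Hence $|\nabla^m\tilde U_\la(t,x)|\lesssim (\la t)^{-1-m}\sum_{|\beta|\le m}|(\nabla^\beta P_\la)(\frac{x}{\la t})|$ (with bounded coefficients in $x$ on $|x|<2$), and then a change of variables $y=x/(\la t)$ gives $\|\nabla^m\tilde U_\la(t)\|_{L^p}\lesssim (\la t)^{-1-m}(\la t)^{2/p}\sum_{|\beta|\le m}\|\nabla^\beta P_\la\|_{L^p(\R^2)}$. The sum is finite and bounded uniformly in small $\la$ by Proposition~\ref{prop:gm} (the $P_\la^{(k)}$ decay exponentially, hence lie in every $L^p$ with norms bounded as $\la\to 0$), yielding the claimed bound for $U_\la$. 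The computation for $W_\la$ is identical except that the prefactor is $(\la t)^{-2}$ instead of $(\la t)^{-1}$ and there is no phase, so one gets $(\la t)^{-2-m+2/p}$ times $\sum_{|\beta|\le m}\|\nabla^\beta N_\la\|_{L^p(\R^2)}$, which is again uniformly bounded for small $\la$ since $N_\la^{(k)}(x)\lesssim\langle x\rangle^{-(3+k)}\in L^p(\R^2)$ for $p\ge 1$.

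The only point requiring a little care is confirming that the phase-derivative contributions (the powers of $|x|/t$) really are absorbed. This is where the restriction $|x|<2$ on the support of $\psi$ and the smallness of $\la$ enter: each factor of $|x|/(2t)$ is $\le (\la t)^{-1}\cdot(\la|x|/2)\lesssim(\la t)^{-1}$ for $\la\lesssim 1$, so mixed terms with $a$ phase-hits and $b=m-a$ derivatives on $P_\la$ are bounded by $(\la t)^{-1-a}\cdot(\la t)^{-b}\langle x/(\la t)\rangle^{a}|(\nabla^b P_\la)(\frac{x}{\la t})|$ up to constants, and $\langle y\rangle^a|\nabla^b P_\la(y)|$ is still exponentially decaying, hence in every $L^p$ with norm uniformly bounded in $\la$. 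So the same change of variables closes the estimate. I expect this absorption bookkeeping to be the only mildly delicate step; everything else is routine scaling and an application of Proposition~\ref{prop:gm}. $\qed$
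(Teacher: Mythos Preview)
Your proposal is correct and is precisely the ``direct calculation using the definition of $U_{\la}$, $W_{\la}$ and Proposition~\ref{prop:gm}'' that the paper invokes in one line; you have simply spelled out the scaling and Leibniz bookkeeping that the paper omits. The only cosmetic slip is in the last paragraph, where the factor $(\la t)^{-1-a}$ and the weight $\langle x/(\la t)\rangle^a$ are two alternative ways to account for the same phase contribution and should not both appear---but either bound alone closes the estimate, so the argument is fine.
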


\begin{proof}
By direct calculation using the definition of $U_{\la}$, $W_{\la}$ and Proposition \ref{prop:gm}, we have the conclusion.
\end{proof}

We next estimate the inhomogeneous term of the Schr\"odinger part of (\ref{eq:KMep}).
\begin{Lem}\label{lem:inhomo}
For any $k\geq 0$, there exist $C=C(k)>0$ and $\mu >0$ independent of $0<\la,t<1$ such that
\[ \Sobn{U_\la (t)Z_{\la,a}(t)+(\psi-1)\psi \tilde{U}_{\la}(t)\tilde{W}_{\la}(t)-2\nabla\psi\nabla\tilde{U}_{\la}(t)-\De\psi\tilde{U}_{\la}(t)}{k}\leq Ce^{-\frac{\mu}{\la t}}.\]
\end{Lem}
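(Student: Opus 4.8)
The four terms in the Sobolev norm are of two types: those built from $\tilde U_\la$ alone (namely $(\psi-1)\psi\tilde U_\la\tilde W_\la$, $\nabla\psi\nabla\tilde U_\la$, $\De\psi\tilde U_\la$), which are all supported away from the origin because $1-\psi$, $\nabla\psi$, $\De\psi$ vanish on $B(0,1)$; and the term $U_\la Z_{\la,a}$, which requires the finite-speed-of-propagation property of $Z_{\la,a}$. The plan is to treat these two types separately and in each case extract an exponential factor $e^{-\mu/(\la t)}$ from the Gaussian-type decay of $\tilde U_\la$ on the region $|x|\gtrsim 1$.

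First I would handle the $\tilde U_\la$-only terms. Recall $\tilde U_\la(t,x)=(\la t)^{-1}e^{-i(\cdots)}P_\la(x/(\la t))$, so each spatial derivative of $\tilde U_\la(t,x)$ produces, besides derivatives of $P_\la$ evaluated at $x/(\la t)$, factors involving $x/t$ from differentiating the phase $e^{i|x|^2/(4t)}$; on the support of $\nabla\psi$, $\De\psi$, or $1-\psi$ we have $1\le|x|\le 2$, so these extra factors are bounded by $Ct^{-1}$, hence polynomial in $(\la t)^{-1}$. Meanwhile $|P_\la^{(j)}(y)|\lesssim e^{-\de|y|}$ from Proposition~\ref{prop:gm}, so on $1\le|x|\le 2$ we get $|P_\la^{(j)}(x/(\la t))|\lesssim e^{-\de/(\la t)}$. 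Thus each such term is bounded in $H^k$ (after differentiating at most $k$ times and using that these are Schwartz multipliers times the relevant cut-offs) by $C(\la t)^{-C(k)}e^{-\de/(\la t)}$; similarly $\tilde W_\la$ on $1\le|x|\le 2$ satisfies $|\tilde W_\la(t,x)|\lesssim (\la t)^{-2}\Jbr{x/(\la t)}^{-3}\lesssim (\la t)$, so the product $(\psi-1)\psi\tilde U_\la\tilde W_\la$ is handled the same way. Absorbing the polynomial prefactor into the exponential by choosing $\mu<\de$ (and shrinking $t$, but note the statement is uniform in $0<t<1$, so one uses $(\la t)^{-C}e^{-\de/(\la t)}\le C'e^{-\mu/(\la t)}$ for any $\mu<\de$, uniformly) gives the bound for these three terms.

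Next, the term $U_\la Z_{\la,a}=\psi\tilde U_\la Z_{\la,a}$. By the finite-speed-of-propagation discussion after \eqref{eq:inhomwave}, $Z_{\la,a}(t,x)\equiv 0$ on $B(0,1/2)$ for $0<t<1/2$; hence on the support of $\psi\tilde U_\la\cdot Z_{\la,a}$ we have $|x|\ge 1/2$, where again $|\tilde U_\la^{(j)}(t,x)|\lesssim (\la t)^{-1-j}(1+|x|/t)^j e^{-\de/(2\la t)}\lesssim (\la t)^{-C(k)}e^{-\de/(2\la t)}$ for $|x|\ge 1/2$. Combining this with the polynomial-in-$t$ bound \eqref{est:Z_la} for $\norm{Z_{\la,a}(t)}_{H^k}$ — which costs only a factor $t^{-1}$, again harmless — and using the algebra/Leibniz estimate for $H^k$ with $k\ge $ enough, or simply estimating $\norm{fg}_{H^k}\lesssim\norm{f}_{C^k(|x|\ge 1/2)}\norm{g}_{H^k}$ since $f=\psi\tilde U_\la$ is smooth and supported where $g=Z_{\la,a}$ lives, yields $\norm{U_\la Z_{\la,a}}_{H^k}\lesssim_{k,\la,a}(\la t)^{-C(k)}e^{-\de/(2\la t)}$ for $0<t<1/2$. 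Choosing $\mu<\de/2$ absorbs all polynomial prefactors, and enlarging $C=C(k)$ covers $1/2\le t<1$ trivially (everything is bounded there). Taking $\mu$ to be the minimum of the constants produced in the two cases completes the proof.

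The main obstacle is the term $U_\la Z_{\la,a}$: unlike the other three, $Z_{\la,a}$ is not explicit, so one must genuinely invoke the finite-speed-of-propagation localization to push the product into the region $|x|\ge 1/2$ where $\tilde U_\la$ is exponentially small, and one must check that the polynomially-growing bound \eqref{est:Z_la} on $Z_{\la,a}$ (whose implicit constant depends on $\la,a$) is indeed beaten by that exponential decay uniformly down to $t=0+$ — which it is, precisely because $e^{-\de/(2\la t)}$ decays faster than any power of $t$. Everything else is bookkeeping with Leibniz's rule and Proposition~\ref{prop:gm}.
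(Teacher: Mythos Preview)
Your approach is exactly the paper's: the paper's proof is a one-liner (``since all the functions are supported away from $0$, the estimate follows from the properties of $\tilde U_\la$, $\tilde W_\la$, and \eqref{est:Z_la}''), and you have correctly unpacked what that sentence means --- the support of $(\psi-1)$, $\nabla\psi$, $\De\psi$ localizes three terms to $|x|\ge 1$, finite speed of propagation localizes $U_\la Z_{\la,a}$ to $|x|\ge 1/2$, and then the exponential decay of $P_\la$ from Proposition~\ref{prop:gm} does the work.

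One small gap: your ``enlarging $C=C(k)$ covers $1/2\le t<1$ trivially (everything is bounded there)'' does not work if $C$ must be independent of $\la$. For $t\ge 1/2$ the right-hand side $Ce^{-\mu/(\la t)}$ is $\le Ce^{-\mu/\la}$, which tends to $0$ as $\la\to 0$, while your bound on the left-hand side in that range is merely $O_{\la}(1)$. In fact finite speed of propagation only gives $Z_{\la,a}(t,\cdot)\equiv 0$ on $B(0,1-t)$, so as $t\uparrow 1$ the available exponential rate $\de(1-t)/(\la t)$ degenerates and no fixed $\mu>0$ can work uniformly. This is harmless for the paper --- the lemma is only invoked for $t\le T_\ep\ll 1$ in Propositions~\ref{prop:regularize} and \ref{prop:modenergyest}, and the paper's own one-line proof tacitly uses the same restriction (the support claim for $U_\la Z_{\la,a}$ is stated earlier only for $0<t<1/2$). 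You should simply state the bound for, say, $0<t<1/2$; nothing else in your argument needs changing.
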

\begin{proof}
Since all the functions are supported away from $0$, the above estimate follows easily from the properties of $\tilde{U}_\la$, $\tilde{W}_\la$, and \eqref{est:Z_la}.
\end{proof}

Now, we construct a solution of \eqref{eq:KMep}.
\begin{Prop}\label{prop:regularize}
Let $0<\ep \leq 1$, and let $\la>0$ sufficiently small so that \eqref{eq:Z} on $\R^2$ may have a blow-up solution $(\tilde{U}_{\la},\tilde{W}_{\la})$ defined above.
Then, there exists a unique solution $(u,r)$ of \eqref{eq:KMep} in $X_{T_{\ep}}$, where $T_{\ep}\sim_{\la} \ep^{\frac{3}{2}}$.
Further, we have $\mathcal{H}[u,r](t)\to 0$ as $t\to 0$.
\end{Prop}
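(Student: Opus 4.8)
The plan is to solve \eqref{eq:KMep} by the contraction mapping principle in the ball $B_R:=\{(u,r)\in X_{T_\ep}:\norm{(u,r)}_{X_{T_\ep}}\le R\}$ for a suitable $R=R(\la)$ and a time $T_\ep\sim_\la\ep^{3/2}$, using the Duhamel formula
\[
u(t)=\int_0^tV_\ep(t-s)\mathcal{N}_u[u,r](s)\,ds,\qquad
r(t)=\int_0^tV_\ep^{w}(t-s)\,|\nabla|\bigl(|u|^2+\bar U_\la u+U_\la\bar u\bigr)(s)\,ds,
\]
where $\mathcal{N}_u$ collects the right-hand side of the Schr\"odinger part and $V_\ep^w$ is the semigroup for $(i\d_t-|\nabla|+i\ep\De^2)$; note there is no initial datum because we want solutions decaying as $t\to0$. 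First I would record the smoothing bound of Lemma~\ref{lem:para} (and its obvious analogue for $V_\ep^w$, whose symbol has the same dissipative factor $e^{-\ep|m|^4(t-s)}$), the decay bounds of Lemma~\ref{lem:UW} for $\nabla^kU_\la,\nabla^kW_\la$, and the exponential decay of the external force from Lemma~\ref{lem:inhomo}. The key quantitative point is that $\mathcal{H}[u,r](t)$ contains the weight $e^{\mu/(2\la t)}$, which matches the decay $e^{-\mu/(\la t)}$ of the forcing term, so the forcing contributes a finite, in fact $O(1)$, amount to the $X_{T_\ep}$-norm uniformly in $\ep$.

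The main work is to estimate, term by term, the contribution of each nonlinear and linear term to each of the four components of $\mathcal{H}$ (the weighted $\dot H^3$, $L^2$ norms of $u$ and the weighted $\dot H^2$, $L^2$ norms of $r$), and to check that every bound carries either a positive power of $T_\ep$ or, via the $\ep^{-l/4}$ smoothing in Lemma~\ref{lem:para} combined with $T_\ep\sim\ep^{3/2}$, a positive power of $\ep$ times $T_\ep^{0+}$, hence is small. For the worst linear term $W_\la u\sim(\la t)^{-2}u$: since $W_\la\in\R$, the $L^2$-estimate of $u$ does not see the top-order piece, and in $\dot H^3$ one integrates by parts so that only $\nabla^lW_\la\cdot\nabla^{3-l}u$ with $l\ge1$ appears; the weight hierarchy built into $\mathcal{H}$ (the components of $u$ are weighted by $t^{-4}$ and $t^0$, those of $r$ by $t^{-10/3},t^{-2/3}$) is exactly designed so that each $\nabla^{3-l}u$ has the extra $t$-decay needed to absorb $(\la t)^{-2-l}$ after also using a smoothing factor $\ep^{-1/4}(t-s)^{-1/4}$ from $V_\ep$; integrating against $\int_0^t(t-s)^{-1/4}s^{-\alpha}e^{\mu/(2\la s)}e^{-\mu/(2\la t)}\,ds$ and using $\ep^{-1/4}T_\ep^{3/4}\lesssim_\la\ep^{1/2}$ gives a factor $\lesssim_\la\ep^{0+}$. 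The cubic and quadratic terms $u\,\Re r$, $|u|^2$, $U_\la\Re r$, $\bar U_\la u$, $U_\la\bar u$ are handled by the algebra property of $H^2,H^3$ on $\T^2$, the interpolation remark after the definition of $\mathcal{H}$, the bounds of Lemma~\ref{lem:UW}, and the same time-integral computation; each comes out $\lesssim_\la T_\ep^{0+}R^2$ or $T_\ep^{0+}R$. I would then choose $R$ so that the forcing contribution is $\le R/2$, and shrink $T_\ep\sim_\la\ep^{3/2}$ so that all the remaining contributions are $\le R/2$ on $B_R$; the difference estimate is identical in structure, giving a contraction. The fixed point is the unique solution in $B_R$; uniqueness in all of $X_{T_\ep}$ follows by a standard continuity/absorption argument since the same estimates apply to any two solutions. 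Finally $\mathcal{H}[u,r](t)\to0$ as $t\to0$ because every term in the Duhamel representation was bounded by an integral $\int_0^t(\cdots)\,ds$ that tends to $0$ with $t$ — equivalently, the fixed point lies in the closed subspace of $X_{T_\ep}$ of pairs with $\lim_{t\to0}\mathcal{H}=0$, which is preserved by the contraction.

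The step I expect to be the main obstacle is the bookkeeping for the $\dot H^3$-estimate of $u$: one must simultaneously (a) exploit that $W_\la$ is real to kill the top-order term in $\tfrac{d}{dt}$-type quantities — here translated into an integration by parts inside the Duhamel term so that only $l\ge1$ derivatives hit $W_\la$ — (b) use precisely the right amount of parabolic smoothing $\ep^{-l/4}(t-s)^{-l/4}$ without losing integrability in $s$ near $s=t$ (so $l\le3$, and for $l=3$ one needs the $\dot H^0$ component of $u$, which is why $u\in L^2$ with the strong weight $t^{-4}$ is part of $\mathcal{H}$), and (c) verify that the resulting powers of $\ep$ and $T_\ep$ combine, through $T_\ep\sim_\la\ep^{3/2}$, into a genuinely small constant. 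Getting the exponents $4,\;2/3,\;10/3$ in the weights to be mutually consistent across all terms — i.e., checking that the ansatz for the norm actually closes — is the delicate part; everything else is routine once Lemmas~\ref{lem:para}, \ref{lem:UW}, and~\ref{lem:inhomo} are in hand.
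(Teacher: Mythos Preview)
Your overall strategy---Duhamel formulation, contraction in the weighted space $X_{T_\ep}$ using Lemmas~\ref{lem:para}, \ref{lem:UW}, \ref{lem:inhomo}, and the choice $T_\ep\sim_\la\ep^{3/2}$---is exactly the paper's approach, and most of what you write is correct.

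There is, however, one genuine misconception in how you propose to treat the worst term $W_\la u$. You claim that ``since $W_\la\in\R$, the $L^2$-estimate of $u$ does not see the top-order piece'' and that in $\dot H^3$ one can ``integrate by parts inside the Duhamel term so that only $l\ge1$ derivatives hit $W_\la$''. Neither of these is available here. The cancellation $\Im\int W_\la|\nabla^k u|^2=0$ coming from realness of $W_\la$ is a feature of the \emph{energy} computation $\frac{d}{dt}\|\nabla^k u\|_{L^2}^2$ (and is indeed exploited later, in Section~\ref{modified}); it has no analogue inside a Duhamel integral $\int_0^tV_\ep(t-s)[W_\la u](s)\,ds$, where one is simply estimating norms of $W_\la u$. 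In particular the Leibniz term $W_\la\nabla^3u$ cannot be integrated away.

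The paper handles $W_\la u$ without any such cancellation, purely by parabolic smoothing plus the weight hierarchy: for the $\dot H^3$ component one takes $l=2$ in Lemma~\ref{lem:para} to reduce to $\|W_\la u\|_{H^1}$, then uses product estimates placing $W_\la$ in $L^2$ (so $\|W_\la\|_{L^2}\lesssim(\la s)^{-1}$) against $\|u\|_{H^{2+}}\lesssim s^{4/3-}e^{-\mu/(2\la s)}\mathcal{H}$; for the $L^2$ component one takes $l=1$ and drops to $\|W_\la u\|_{H^{-1}}\lesssim\|W_\la u\|_{L^{1+}}\lesssim\|W_\la\|_{L^{2+}}\|u\|_{L^2}$. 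The resulting time integrals give factors $\ep^{-1/2}T_\ep^{4/3-}$ and $\ep^{-1/4}T_\ep^{1/2-}$ respectively, both small when $T_\ep\sim_\la\ep^{3/2}$. So your points (b) and (c) are on target; point (a) is neither needed nor correct in this section, and you should drop it.

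Two minor points: the forcing contribution is not merely $O(1)$ but in fact $O(e^{-\mu/(4\la T_\ep)})$, which the paper uses to run the contraction on a \emph{small} ball as well as a large one---this second contraction is what gives $\mathcal{H}[u,r](t)\to0$ directly. And uniqueness in all of $X_{T_\ep}$ is obtained by combining the large-ball uniqueness on a short initial interval with the (easy) uniqueness for the regularized initial-value problem started at a positive time.
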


\begin{proof}
Set
\begin{eqnarray*}
Q_{S,0}&:=&U_\la Z_{\la,a}+(\psi-1)\psi \tilde{U}_{\la}\tilde{W}_{\la}-2\nabla\psi\nabla\tilde{U}_{\la}-\De\psi\tilde{U}_{\la}\\
Q_{S,1}&:=& (W_{\la}+Z_{\la,a})u+U_{\la}\Re r\\
Q_{S,2}&:=&u\Re r\\
Q_{W,1}&:=&|\nabla| (\bar{U}_{\la}u+U_{\la}\bar{u})\\
Q_{W,2}&:=&|\nabla| (|u|^2),
\end{eqnarray*}
and
\begin{eqnarray*}
I_{S,j}(t)&:=&\int_0^tV_{\ep}(t-s)Q_{S,j}(s)\,ds,\quad j=0,1,2,\\
I_{W,j}(t)&:=&\int_0^tU_{\ep}(t-s)Q_{W,j}(s)\,ds,\quad j=1,2.
\end{eqnarray*}
Set
\begin{eqnarray*}
\Phi_{\ep}(u,r):=\(\sum_{j=0}^2I_{S,j}, \sum_{l=1}^2I_{W,l}\).
\end{eqnarray*}
It suffices to show that $\Phi_{\ep}$ is a contraction mapping in $X_{T_{\ep}}$.

{\bf Estimate for $I_{S,0}$.}
\begin{eqnarray*}
||I_{S,0}||_{H^3}&\leq & \int_0^t||Q_{S,0}||_{H^3}\,ds \lesssim \int_0^t e^{-\frac{\mu}{\la s}}\,ds\\
&\lesssim&\la t^2e^{-\frac{\mu}{\la t}}. 
\end{eqnarray*}
Therefore,
\begin{eqnarray*}
\sup_{t\in (0,T_{\ep}]}e^{\frac{\mu}{2\la t}}t^{-4}||I_{S,0}||_{H^3}\lesssim \la T_{\ep}^{-2}e^{-\frac{\mu}{2\la T_{\ep}}}.
\end{eqnarray*}

{\bf Estimate for $I_{S,1}$.}
\begin{eqnarray*}
&||\int_0^t V_{\ep}(t-s)W_{\la}(s)u(s)\,ds||_{L^2}\lesssim 
\ep^{-\frac{1}{4}}\int_0^t(t-s)^{-\frac{1}{4}}||W_{\la}u||_{H^{-1}}\,ds\\
& \lesssim \ep^{-\frac{1}{4}}\int_0^t (t-s)^{-\frac{1}{4}}||W_{\la}u||_{L^{1+}}\,ds\\
&\qquad\lesssim \ep^{-\frac{1}{4}}\int_0^t (t-s)^{-\frac{1}{4}}||W_{\la}||_{L^{2+}}||u||_{L^2}\,ds\\
&\ \ \qquad\qquad\qquad\lesssim  \ep^{-\frac{1}{4}}\la^{-1-}\int_0^t (t-s)^{-\frac{1}{4}}s^{3-}e^{-\frac{\mu}{2\la s}}\,ds||(u,r)||_{X_{T_{\ep}}}\\
&\qquad\qquad\qquad\qquad\quad\lesssim \ep^{-\frac{1}{4}}\la^{-1-}\(\int_0^{t-t^2}\cdots\,ds+\int_{t-t^2}^t\cdots\,ds\)||(u,r)||_{X_{T_{\ep}}},
\end{eqnarray*}
where we have used Lemma \ref{lem:para} for the first inequality and Lemma \ref{lem:UW} for the fourth inequality.
Now, by
\begin{eqnarray*}
\int_0^{t-t^2} (t-s)^{-\frac{1}{4}}s^{3-}e^{-\frac{\mu}{2\la s}}\,ds&\lesssim &t^{-\frac{1}{2}}\int_0^{t-t^2}s^{3-}e^{-\frac{\mu}{2\la s}}\,ds\\
&\lesssim&t^{\frac{9}{2}-}e^{-\frac{\mu}{2\la t}},
\end{eqnarray*}
and
\begin{eqnarray*}
\int_{t-t^2}^t (t-s)^{-\frac{1}{4}}s^{3-}e^{-\frac{\mu}{2\la s}}\,ds&\lesssim &t^{-3-}e^{-\frac{\mu}{2\la t}}\int_{t-t^2}^t(t-s)^{-\frac{1}{4}}\,ds\\
&\lesssim&t^{\frac{9}{2}-}e^{-\frac{\mu}{2\la t}},
\end{eqnarray*}
we have
\begin{eqnarray*}
||\int_0^t V_{\ep}(t-s)W_{\la}(s)u(s)\,ds||_{L^2}&\lesssim & \ep^{-\frac{1}{4}}\la^{-1-}e^{-\frac{\mu}{2\la t}}t^{\frac{9}{2}-}||(u,r)||_{X_{T_{\ep}}}.
\end{eqnarray*}
Therefore,
\begin{eqnarray*}
\sup_{t\in (0,T_{\ep}]}e^{\frac{\mu}{2\la t}}t^{-4}||\int_0^t V_{\ep,\al}(t-s)W_{\la}(s)u(s)\,ds||_{L^2}\lesssim \ep^{-\frac{1}{4}}\la^{-1-}T_{\ep}^{\frac{1}{2}-}||(u,r)||_{X_{T_{\ep}}}.
\end{eqnarray*}
Next, we estimate
\begin{eqnarray*}
&&||\int_0^t V_{\ep}(t-s)W_{\la}(s)u(s)\,ds||_{H^3}\lesssim  \ep^{-\frac{1}{2}}\int_0^t(t-s)^{-\frac{1}{2}}||W_{\la}(s)u(s)||_{H^1}\,ds\\
&\lesssim &\ep^{-\frac{1}{2}}\int_0^t(t-s)^{-\frac{1}{2}}\(||\nabla W||_{L^{2}} ||u||_{H^{1+}}+||W_{\la}||_{L^{2}} ||u||_{H^{2+}}\)\,ds\\
&\lesssim &\ep^{-\frac{1}{2}}\int_0^t(t-s)^{-\frac{1}{2}}\((\la s)^{-2}s^{\frac{8}{3}-}e^{-\frac{\mu}{2\la s}}+(\la s)^{-1}s^{\frac{4}{3}-}e^{-\frac{\mu}{2\la s}}\)\,ds||(u,r)||_{X_{T_{\ep}}}\\
&\lesssim& \ep^{-\frac{1}{2}}\la^{-2}t^{\frac{4}{3}-}e^{-\frac{\la}{2\mu t}}||(u,r)||_{X_{T_{\ep}}}.
\end{eqnarray*}
Therefore,
\begin{eqnarray*}
\sup_{t\in (0,T_{\ep}]}e^{\frac{\mu}{2\la t}}||\int_0^t V_{\ep}(t-s)W_{\la}(s)u(s)\,ds||_{H^3}\lesssim \ep^{-\frac{1}{2}}\la^{-2}T_{\ep}^{\frac{4}{3}-}||(u,r)||_{X_{T_{\ep}}}.
\end{eqnarray*}
We estimate the second term of $I_{S,1}$.
\begin{eqnarray*}
||\int_0^t V_{\ep}(t-s)Z_{\la,a}(s)u(s)\,ds||_{L^2}&\leq &\int_0^t||Z_{\la,a}(s)||_{H^3}||u||_{L^2}\,ds\\
&\lesssim & \la t^7 e^{-\frac{\mu}{2\la t}}||(u,r)||_{X_{T_{\ep}}}.
\end{eqnarray*}
Therefore,
\begin{eqnarray*}
\sup_{t\in (0,T_{\ep}]}t^{-4}e^{\frac{\mu}{2\la t}}||\int_0^t V_{\ep,\al}(t-s)Z_{\la,a}(s)u(s)\,ds||_{L^2}\lesssim \la T_{\ep}^3||(u,r)||_{X_{T_{\ep}}}.
\end{eqnarray*}
\begin{eqnarray*}
||\int_0^t V_{\ep,\al}(t-s)Z_{\la,a}(s)u(s)\,ds||_{H^3}&\leq &\int_0^t||Z_{\la,a}(s)||_{H^3}||u||_{H^3}\,ds\\
&\lesssim & \la t^3 e^{-\frac{\mu}{2\la t}}||(u,r)||_{X_{T_{\ep}}}
\end{eqnarray*}
Therefore,
\begin{eqnarray*}
\sup_{t\in (0,T_{\ep}]}e^{\frac{\mu}{2\la t}}||\int_0^t V_{\ep,\al}(t-s)Z_{\la,a}(s)u(s)\,ds||_{H^3}\lesssim \la T_{\ep}^3||(u,r)||_{X_{T_{\ep}}}
\end{eqnarray*}
We estimate the third term of $I_{S,1}$.
\begin{eqnarray*}
||\int_0^t V_{\ep}(t-s)U_{\la}(s)\Re r(s)\,ds||_{L^2}&\lesssim &\int_0^t(\la s)^{-1}s^{\frac{10}{3}}e^{-\frac{\mu}{2\la s}}\,ds||(u,r)||_{X_{T_{\ep}}}\\
&\lesssim &t^{\frac{13}{3}}e^{-\frac{\mu}{2\la t}}||(u,r)||_{X_{T_{\ep}}}
\end{eqnarray*}
Therefore,
\begin{eqnarray*}
\sup_{t\in (0,T_{\ep}]}t^{-4}e^{\frac{\mu}{2\la t}}||\int_0^t V_{\ep,\al}(t-s)U_{\la}(s)\Re r(s)\,ds||_{L^2} \lesssim T_{\ep}^{\frac{1}{3}}||(u,r)||_{X_{T_{\ep}}}.
\end{eqnarray*}

\begin{eqnarray*}
&&||\int_0^t V_{\ep}(t-s)U_{\la}(s)\Re r(s)\,ds||_{H^3}\lesssim \ep^{-\frac{1}{4}}\int_0^t(t-s)^{-\frac{1}{4}}||U_{\la}\Re r||_{H^2}\,ds\\
&&\lesssim \ep^{-\frac{1}{4}}\int_0^t(t-s)^{-\frac{1}{4}}\(||\nabla^2U_{\la}||_{L^{\infty}}||\Re r||_{L^2}+||U_{\la}||_{L^{\infty}}||\Re r||_{H^2}\)\,ds\\
&&\lesssim \ep^{-\frac{1}{4}}\int_0^t(t-s)^{-\frac{1}{4}}\((\la s)^{-3}s^{\frac{10}{3}}e^{-\frac{\mu}{2\la s}}+(\la s)^{-1}s^{\frac{2}{3}}e^{-\frac{\mu}{2\la s}}\)\,ds||(u,r)||_{X_{T_{\ep}}}\\
&&\lesssim \ep^{-\frac{1}{4}}\(\la^{-3}t^{\frac{11}{6}}+\la^{-1}t^{\frac{7}{6}}\)e^{-\frac{\mu}{2\la t}}||(u,r)||_{X_{T_{\ep}}}.
\end{eqnarray*}
Therefore,
\begin{eqnarray*}
\sup_{t\in (0,T_{\ep}]}e^{\frac{\mu}{2\la t}}||\int_0^t V_{\ep}(t-s)U_{\la}(s)\Re r(s)\,ds||_{H^3}\lesssim \ep^{-\frac{1}{4}}\la^{-3} T_{\ep}^{\frac{6}{7}}||(u,r)||_{X_{T_{\ep}}}.
\end{eqnarray*}
Collecting the above estimates, we have
\begin{eqnarray*}
\sup_{t\in (0,T_{\ep}]}e^{\frac{\mu}{2\la t}}\(||I_{S,1}||_{H^3}+t^{-4}||I_{S,1}||_{L^2}\)\lesssim \ep^{-\frac{1}{2}}\la^{-3}T^{\frac{1}{3}}||(u,r)||_{X_{T_{\ep}}}.
\end{eqnarray*}

{\bf Estimate for $I_{S,2}$}.
\begin{eqnarray*}
||\int_0^t V_{\ep}(t-s) u(s)\Re r(s)\,ds||_{H^3}&\lesssim & \ep^{-\frac{1}{4}}\int_0^t(t-s)^{-\frac{1}{4}}||u||_{H^2}||r||_{H^2}\,ds\\
&\lesssim &\ep^{-\frac{1}{4}}\int_0^t(t-s)^{-\frac{1}{4}}s^2e^{-\frac{\mu}{\la s}}\,ds||(u,r)||_{X_{T_{\ep}}}^2\\
&\lesssim &\ep^{-\frac{1}{4}}t^{\frac{7}{2}}e^{-\frac{\mu}{\la t}}||(u,r)||_{X_{T_{\ep}}}^2
\end{eqnarray*}
Therefore,
\begin{eqnarray*}
\sup_{t\in (0,T_{\ep}]}t^{-4}e^{\frac{\mu}{2\la t}}||\int_0^t V_{\ep,4}(t-s)u(s)\Re r(s)\,ds||_{H^3} \lesssim \ep^{-\frac{1}{4}}T_{\ep}^{-\frac{1}{2}}e^{-\frac{\mu}{2\la T_{\ep}}}||(u,r)||_{X_{T_{\ep}}}^2.
\end{eqnarray*}

{\bf Estimate for $I_{W,1}$}.
\begin{eqnarray*}
||I_{W,1}||_{L^2}&\lesssim & \int_0^t ||U_{\la} u ||_{H^1}\,ds\\
&\lesssim & \int_0^t ||\nabla U_{\la} ||_{L^{\infty}}||u||_{L^2}+||U_{\la}||_{L^{\infty}}||u||_{H^1}\,ds\\
&\lesssim & \int e^{-\frac{\mu}{2\la s}}\(\la^{-2}s^2+\la^{-1}s^{\frac{5}{3}}\)\,ds||(u,r)||_{X_{T_{\ep}}}\\
&\lesssim & \la^{-2}t^{\frac{11}{3}}e^{-\frac{\mu}{2\la t}}||(u,r)||_{X_{T_{\ep}}}.
\end{eqnarray*}
Therefore,
\begin{eqnarray*}
\sup_{t\in (0,T_{\ep}]}e^{\frac{\mu}{2\la t}}t^{-\frac{10}{3}}||I_{W,1}||_{L^2}\lesssim \la^{-2}T_{\ep}^{\frac{1}{3}}||(u,r)||_{X_{T_{\ep}}}.
\end{eqnarray*}
Next,
\begin{eqnarray*}
||I_{W,1}||_{H^2}&\lesssim & \int_0^t ||U_{\la} u ||_{H^3}\,ds\\
&\lesssim & \int_0^t ||\nabla^3 U_{\la} ||_{L^{\infty}}||u||_{L^2}+||U_{\la}||_{L^{\infty}}||u||_{H^3}\,ds\\
&\lesssim & \int e^{-\frac{\mu}{2\la s}}\((\la s)^{-4}s^4+(\la s)^{-1}\)\,ds||(u,r)||_{X_{T_{\ep}}}\\
&\lesssim & \la^{-3}t e^{-\frac{\mu}{2\la t}}||(u,r)||_{X_{T_{\ep}}}
\end{eqnarray*}
Therefore,
\begin{eqnarray*}
\sup_{t\in (0,T_{\ep}]}e^{\frac{\mu}{2\la t}}t^{-\frac{2}{3}}||I_{W,1}||_{H^2}\lesssim \la^{-3}T_{\ep}^{\frac{1}{3}}||(u,r)||_{X_{T_{\ep}}}.
\end{eqnarray*}

{\bf Estimate for $I_{W,2}$}.
\begin{eqnarray*}
||I_{W,2}||_{H^2}&\lesssim &\int_0^t ||u||_{H^2}||u||_{H^3}\,ds\\
&\lesssim &\int_0^ts^{\frac{4}{3}}e^{-\frac{\mu}{\la s}}\,ds||(u,r)||_{X_{T_{\ep}}}^2\\
&\lesssim &\la t^{\frac{10}{3}}e^{-\frac{\mu}{\la t}}||(u,r)||_{X_{T_{\ep}}}^2.
\end{eqnarray*}
Therefore, we have
\begin{eqnarray*}
\sup_{t\in (0,T_{\ep}]}e^{\frac{\mu}{2\la t}}t^{-\frac{10}{3}}||I_{W,2}||_{H^2}\lesssim \la e^{-\frac{\mu}{2\la T_{\ep}}}||(u,r)||_{X_{T_{\ep}}}^2.
\end{eqnarray*}

So far, we have shown that
\begin{eqnarray*}
||\Phi _\ep (u,r)||_{X_{T_\ep}}\lesssim _{\la ,\mu} \ep ^{\frac{1}{2}}T_\ep ^{\frac{1}{3}}||(u,r)||_{X_{T_{\ep}}}+(1+\ep ^{\frac{1}{4}}T_\ep ^{\frac{1}{6}})e^{-\frac{\mu}{4\la T_{\ep}}}||(u,r)||_{X_{T_{\ep}}}^2+e^{-\frac{\mu}{4\la T_{\ep}}}.
\end{eqnarray*}
In the same manner, we have
\begin{eqnarray*}
&&||\Phi _\ep (u_1,r_1)-\Phi _\ep (u_2,r_2)||_{X_{T_\ep}}\\
&&\lesssim _{\la ,\mu} \ep ^{\frac{1}{2}}T_\ep ^{\frac{1}{3}}||(u_1,r_1)-(u_2,r_2)||_{X_{T_{\ep}}}\\
&&\quad +(1+\ep ^{\frac{1}{4}}T_\ep ^{\frac{1}{6}})e^{-\frac{\mu}{4\la T_{\ep}}}\( ||(u_1,r_1)||_{X_{T_{\ep}}}+||(u_2,r_2)||_{X_{T_{\ep}}}\) ||(u_1,r_1)-(u_2,r_2)||_{X_{T_{\ep}}}.
\end{eqnarray*}
Now, we choose $T_\ep\sim _{\la ,\mu} \ep ^{\frac{3}{2}}$ so small that 
\begin{eqnarray*}
&&||\Phi _\ep (u,r)||_{X_{T_\ep}}\leq \frac{1}{2}||(u,r)||_{X_{T_{\ep}}}+C_0e^{-\frac{\mu}{4\la T_{\ep}}}\( ||(u,r)||_{X_{T_{\ep}}}^2+1\),\\
&&||\Phi _\ep (u_1,r_1)-\Phi _\ep (u_2,r_2)||_{X_{T_\ep}}\leq \frac{1}{2}||(u_1,r_1)-(u_2,r_2)||_{X_{T_{\ep}}}\\
&&\qquad +C_0e^{-\frac{\mu}{4\la T_{\ep}}}\( ||(u_1,r_1)||_{X_{T_{\ep}}}+||(u_2,r_2)||_{X_{T_{\ep}}}\) ||(u_1,r_1)-(u_2,r_2)||_{X_{T_{\ep}}}.
\end{eqnarray*}
for some $C_0=C_0(\la ,\mu )>0$.
We may also assume $(4C_0e^{-\frac{\mu}{4\la T_{\ep}}})^2\leq \frac{1}{2}$.
Then, these estimates are sufficient for us to conclude that $\Phi_{\ep}$ is a contraction mapping on a closed ball
\begin{equation}\label{largeball}
\left\{ (u,r)\in X_{T_{\ep}}\Big| \; ||(u,r)||_{X_{T_{\ep}}}\leq (8C_0)^{-1}e^{\frac{\mu}{4\la T_{\ep}}}\right\} ,
\end{equation}
and also on another ball
\begin{equation}\label{smallball}
\left\{ (u,r)\in X_{T_{\ep}}\Big| \; ||(u,r)||_{X_{T_{\ep}}}\leq 4C_0e^{-\frac{\mu}{4\la T_{\ep}}}\right\} ,
\end{equation}
from which the existence of a solution follows.

For the uniqueness in $X_{T_{\ep}}$, let us assume that $(u',r')\in X_{T_\ep}$ is another solution to \eqref{eq:KMep}.
Then, the uniqueness of solution in a ball \eqref{largeball} (with $T_\ep$ replaced by a smaller $t_0$) shows that these two solutions coincide with each other on a sufficiently small time interval $[0,t_0]$.
The coincidence on the whole interval $[0,T_\ep ]$ then follows from the uniqueness of solution for the initial value problem (see Remark~\ref{rem:IVP} below) and a standard continuity argument.

Finally, since the unique solution must coincide with a solution constructed in a small ball \eqref{smallball} (with $T_\ep$ replaced by a smaller $t_1$) , we have
\[ ||(u,r)||_{X_{t_1}}=\sup _{t\in (0,t_1]}\mathcal{H}[u,r](t)\to 0\qquad \text{as}\quad t_1\to 0,\]
which implies that $\mathcal{H}[u,r](t)\to 0$ as $t\to 0$.

This completes the proof of Proposition \ref{prop:regularize}.
\end{proof}

\begin{Rem}\label{rem:IVP}
It is much easier to solve the initial value problem
\begin{equation*}
\left\{
\begin{aligned}
&(i \d_t+\De+i\ep\De^2) u = u\Re r + (W_{\la}+Z_{\la,a})u+U_{\la}\Re r\\&\qquad \qquad \qquad +\(U_\la Z_{\la,a}+(\psi-1)\psi \tilde{U}_{\la}\tilde{W}_{\la}-2\nabla\psi\nabla\tilde{U}_{\la}-\De\psi\tilde{U}_{\la}\),\\
& (i\d_t-|\nabla|+i\ep\De^2)r = |\nabla| \( |u|^2 +\bar{U}_{\la}u+U_{\la}\bar{u}\),\quad (t,x)\in [t_0,t_0+T]\times \T ^2, \\
&(u(t_0,x),r(t_0,x))\in H^3(\T^2) \times H^2(\T^2) ,
\end{aligned}
\right.
\end{equation*}
starting from $t=t_0>0$.
By a standard argument, we obtain a unique solution $(u,r)\in C([t_0,t_0+T];H^3(\T^2)\times H^2(\T^2))$ with $T>0$ depending on
\[ ||u(t_0,\cdot )||_{H^3},\quad ||r(t_0,\cdot )||_{H^2},\quad \sup _{t\geq t_0}|| U_\la, W_\la, Z_{\la ,a}, \emph{external force terms}||_{H^3}.\]
\end{Rem}


\section{Modified Energy}\label{modified}
In this section we are devoted to giving an a priori estimate on the $X_T$ norm of solutions to \eqref{eq:KMep} which is uniform in $\ep \in (0,1]$.

Recall some inequalities that will be frequently used below.
\begin{Lem}[Gagliardo-Nirenberg]
Let $k>k_0\geq 0$ and $k_0\leq l\leq k-1$.
Then, we have
\[ \norm{\nabla ^l\phi}_{L^4(\T^2)}\lesssim \begin{cases} \norm{\phi}_{L^2(\T^2)}^{1-\frac{1}{2k}}\norm{\nabla ^k\phi}_{L^2(\T^2)}^{\frac{1}{2k}}+\norm{\phi}_{L^2(\T^2)},&\text{if } k_0=l=0,\\[10pt]
\norm{\nabla ^{k_0}\phi}_{L^2(\T^2)}^{\frac{1}{k-k_0}(k-l-\frac{1}{2})}\norm{\nabla ^k\phi}_{L^2(\T^2)}^{\frac{1}{k-k_0}(l-k_0+\frac{1}{2})},&\text{otherwise}.\end{cases}\]
\end{Lem}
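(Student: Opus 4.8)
The plan is to deduce this interpolation inequality from the endpoint Sobolev embedding $H^{1/2}(\T^2)\hookrightarrow L^4(\T^2)$ together with an elementary H\"older interpolation of Sobolev norms carried out on the Fourier side. The first observation is that, under the hypotheses $0\le k_0\le l\le k-1$, there are exactly two cases: either $l=0$, which forces $k_0=0$ (this is the first alternative in the statement), or $l\ge 1$; in either case $k\ge 1$. When $l\ge 1$ the function $\nabla^l\phi$ has vanishing spatial mean, so that $\widehat{\nabla^l\phi}$ is supported on $\Z^2\setminus\{0\}$ and, using $\langle m\rangle\sim|m|$ there,
\[ \Lebn{\nabla^l\phi}{4}\lesssim \norm{\nabla^l\phi}_{H^{1/2}(\T^2)}\lesssim \hSobn{\phi}{l+1/2}. \]

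For the case $l\ge 1$ I would then interpolate $\hSobn{\phi}{l+1/2}$ between $\hSobn{\phi}{k_0}$ and $\hSobn{\phi}{k}$. Writing $l+\tfrac12=\theta k_0+(1-\theta)k$ with $\theta=\tfrac1{k-k_0}(k-l-\tfrac12)$, the constraint $k_0\le l\le k-1$ ensures $\theta\in(0,1)$, and H\"older's inequality with exponents $\tfrac1\theta$, $\tfrac1{1-\theta}$ applied to
\[ \hSobn{\phi}{l+1/2}^2=\sum_{m\in\Z^2\setminus\{0\}}\bigl(|m|^{2k_0}|\hat\phi(m)|^2\bigr)^{\theta}\bigl(|m|^{2k}|\hat\phi(m)|^2\bigr)^{1-\theta} \]
gives $\hSobn{\phi}{l+1/2}\le\hSobn{\phi}{k_0}^{\theta}\hSobn{\phi}{k}^{1-\theta}$. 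Since $1-\theta=\tfrac1{k-k_0}(l-k_0+\tfrac12)$ and $\hSobn{\phi}{k_0}=\norm{\nabla^{k_0}\phi}_{L^2}$, $\hSobn{\phi}{k}=\norm{\nabla^{k}\phi}_{L^2}$, this is exactly the ``otherwise'' estimate.

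For the remaining case $l=k_0=0$ one cannot discard the zero mode, so I would instead start from $\Lebn{\phi}{4}\lesssim\Sobn{\phi}{1/2}$ and interpolate $\Sobn{\phi}{1/2}$ between $\Lebn{\phi}{2}$ and $\Sobn{\phi}{k}$ (here $\tfrac12=\tfrac1{2k}\cdot k$ and $k\ge 1$), obtaining $\Sobn{\phi}{1/2}\lesssim\Lebn{\phi}{2}^{1-\frac1{2k}}\Sobn{\phi}{k}^{\frac1{2k}}$. Bounding $\Sobn{\phi}{k}\lesssim\Lebn{\phi}{2}+\norm{\nabla^k\phi}_{L^2}$ and using $(a+b)^{1/(2k)}\le a^{1/(2k)}+b^{1/(2k)}$ then yields $\Sobn{\phi}{1/2}\lesssim\Lebn{\phi}{2}+\Lebn{\phi}{2}^{1-\frac1{2k}}\norm{\nabla^k\phi}_{L^2}^{\frac1{2k}}$, which is the first alternative.

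The only ingredient that is not purely mechanical is the endpoint embedding $H^{1/2}(\T^2)\hookrightarrow L^4(\T^2)$ --- equivalently, restricted to mean-zero functions, $\dot H^{1/2}(\T^2)\hookrightarrow L^4(\T^2)$ --- which I would simply quote (it follows from the Sobolev embedding on $\R^2$ by localization, or in Fourier variables from the discrete bilinear bound $\norm{\hat f\ast\hat g}_{\ell^2(\Z^2)}\lesssim\norm{\langle\cdot\rangle^{1/2}\hat f}_{\ell^2}\norm{\langle\cdot\rangle^{1/2}\hat g}_{\ell^2}$). Apart from that the proof is just bookkeeping of the interpolation exponents, and the only point requiring care is keeping track of the zero Fourier mode (present for $l=0$, absent for $l\ge 1$), which is precisely what distinguishes the two alternatives in the statement.
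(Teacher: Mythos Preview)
Your proof is correct and follows essentially the same route as the paper: reduce the general inequality by Fourier-side interpolation to the single endpoint estimate $\dot H^{1/2}(\T^2)\hookrightarrow L^4(\T^2)$ (equivalently $\norm{\phi}_{L^4}\lesssim\norm{\phi}_{L^2}^{1/2}\norm{\nabla\phi}_{L^2}^{1/2}$ for mean-zero $\phi$), with the zero Fourier mode accounting for the extra $\norm{\phi}_{L^2}$ term when $l=k_0=0$. The only difference is that you quote this endpoint embedding as known, whereas the paper supplies a short self-contained proof of it via Hausdorff--Young ($\norm{\phi}_{L^4}\lesssim\norm{\hat\phi}_{\ell^{4/3}}$), a dyadic decomposition in frequency, H\"older on each shell, and optimization over the splitting scale; both arguments are standard and amount to the same thing.
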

\begin{proof}
By an easy interpolation argument, it suffices to show the following simple estimate:
\[ \norm{\phi}_{L^4(\T^2)}\lesssim \norm{\phi}_{L^2(\T^2)}^{\frac{1}{2}}\norm{\nabla \phi}_{L^2(\T^2)}^{\frac{1}{2}}\]
for $\phi \in H^1$ such that $\hat{\phi}(0)=0$.

Using the Hausdorff-Young inequality to go to the frequency space and applying the H\"older inequality, we have
\begin{eqnarray*}
&&\norm{\phi}_{L^4(\T^2)}\lesssim \tnorm{\hat{\phi}}_{\ell ^{4/3}(\Z ^2)}\leq \sum _{j=0}^{\I}\tnorm{\hat{\phi}}_{\ell ^{4/3}(2^j\leq |k|<2^{j+1})}\\
&\lesssim &\sum _{j<J_0}2^{\frac{j}{2}}\tnorm{\hat{\phi}}_{\ell ^2(\Z^2)}+\sum _{j\geq J_0}2^{-\frac{j}{2}}\tnorm{\widehat{|\nabla |\phi}}_{\ell ^2(\Z^2)},
\end{eqnarray*}
where $J_0\geq 0$ will be chosen in a moment.
If $\norm{\phi}_{L^2(\T^2)}>\norm{\nabla \phi}_{L^2(\T^2)}$, we choose $J_0=0$ to obtain the claim.
Otherwise, the last line is estimated by
\[ 2^{\frac{J_0}{2}}\norm{\phi}_{L^2(\T^2)}+2^{-\frac{J_0}{2}}\norm{\nabla \phi}_{L^2(\T^2)}.\]
We choose $J_0$ such that $2^{J_0}\leq \norm{\nabla \phi}_{L^2(\T^2)}/\norm{\phi}_{L^2(\T^2)}<2^{J_0+1}$, then the claim follows.
\end{proof}

\begin{Lem}[Young]
Let $a,b\geq 0$ and $1<p,q<\I$, $\frac{1}{p}+\frac{1}{q}=1$.
Then, we have
\[ ab \leq \frac{1}{p}a^p+\frac{1}{q}b^q.\]
\end{Lem}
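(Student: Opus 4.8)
The plan is to derive the inequality from the convexity of the exponential function (equivalently, the concavity of the logarithm), which is the shortest route. First I would dispose of the degenerate cases: if $a=0$ or $b=0$ the left-hand side vanishes while the right-hand side is nonnegative, so the inequality is trivial. Hence from now on I may assume $a,b>0$, so that $\log a$ and $\log b$ are well defined.

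For the main case, write
\[
ab=\exp(\log a+\log b)=\exp\Big(\tfrac{1}{p}\log a^{p}+\tfrac{1}{q}\log b^{q}\Big).
\]
Since $\tfrac{1}{p}+\tfrac{1}{q}=1$ with $\tfrac{1}{p},\tfrac{1}{q}\in(0,1)$, the quantities $\log a^{p}$ and $\log b^{q}$ are combined by a genuine convex combination, so the convexity of $t\mapsto e^{t}$ gives
\[
\exp\Big(\tfrac{1}{p}\log a^{p}+\tfrac{1}{q}\log b^{q}\Big)\le \tfrac{1}{p}\exp(\log a^{p})+\tfrac{1}{q}\exp(\log b^{q})=\tfrac{1}{p}a^{p}+\tfrac{1}{q}b^{q},
\]
which is the claim. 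Convexity of the exponential can be taken as known or checked in one line from $(e^{t})''=e^{t}>0$.

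Alternatively, one can give a purely one-variable calculus proof: fix $b>0$ and set $f(a):=\tfrac{1}{p}a^{p}+\tfrac{1}{q}b^{q}-ab$ for $a\ge 0$. Then $f'(a)=a^{p-1}-b$ vanishes exactly at $a_{0}=b^{1/(p-1)}=b^{q/p}$ (using $q/p=q-1=1/(p-1)$, which follows from $\tfrac{1}{p}+\tfrac{1}{q}=1$), and $f''>0$ there, so $a_{0}$ is the global minimum on $[0,\infty)$. A short computation using $p+q=pq$ gives $f(a_{0})=b^{q}-b^{q}=0$, hence $f\ge 0$ everywhere, which is again the assertion. I expect no real obstacle here; the only points requiring a little care are the bookkeeping of the conjugate-exponent identities ($q/p=1/(p-1)$ and $p+q=pq$) and the separate treatment of the boundary case $ab=0$, where the logarithm in the first argument is unavailable.
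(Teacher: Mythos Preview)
Your proof is correct; both the convexity argument and the one-variable calculus alternative are standard and complete. The paper itself does not prove this lemma at all --- it merely recalls Young's inequality as a classical fact to be used later --- so there is nothing to compare against.
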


Let us fix $\ep >0$ and a solution $(u,r)\in X_T$ to \eqref{eq:KMep} arbitrarily.
Write $\mathcal{H}(t)$ to denote $\mathcal{H}[u,r](t)$ for simplicity.
In the following, constants may depend on $\la,a,\mu$, but not on $\ep$ and $t$.
The desired a priori estimate will be given in Corollary~\ref{cor:apriori} at the end of this section.

We begin with a standard energy estimate.
\begin{Prop}\label{prop:energyest}
There exists $T_0>0$ independent of $\ep$ such that we have
\begin{equation*}
\begin{split}
&\frac{d}{dt}\mathcal{H}(t)^2+\frac{\mu}{\la t^2}\mathcal{H}(t)^2+2\ep e^{\frac{\mu}{\la t}}\hSobn{u(t)}{5}^2+2\ep t^{-\frac{4}{3}}e^{\frac{\mu}{\la t}}\hSobn{r(t)}{4}^2\\
\leq ~&2e^{\frac{\mu}{\la t}}\Im \int (u+U_\la )\nabla \De (\Re r)\nabla\De \bar{u}\quad +Ct^{-\frac{5}{3}}\mathcal{H}(t)^2+C(\mathcal{H}(t)+\mathcal{H}(t)^3) 
\end{split}
\end{equation*}
for any $t\in (0,\min \{ T_0,\,T\})$.
\end{Prop}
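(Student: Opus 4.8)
The plan is to prove the inequality as a weighted energy identity read directly off \eqref{eq:KMep}: I would differentiate each of the four pieces of $\mathcal H(t)^2$, isolate the single term that genuinely loses a derivative on $r$, and control everything else by counting powers of $t$.

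More precisely, write $\mathcal H(t)^2=e^{\mu/\la t}\hSobn u3^2+t^{-8}e^{\mu/\la t}\Lebn u2^2+t^{-4/3}e^{\mu/\la t}\hSobn r2^2+t^{-20/3}e^{\mu/\la t}\Lebn r2^2$ and differentiate term by term, using $\d_t u=i\De u-\ep\De^2u-i(\text{RHS of the }u\text{-equation})$ and $\d_t r=-i|\nabla|r-\ep\De^2r-i|\nabla|(|u|^2+\bar U_\la u+U_\la\bar u)$. Differentiating the weights yields exactly $-\frac{\mu}{\la t^2}\mathcal H(t)^2$ together with terms $\le0$, which I discard; the former is moved to the left-hand side. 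The dispersive parts $i\De u$, $-i|\nabla|r$ contribute nothing by skew-adjointness; the dissipative parts contribute $-2\ep\hSobn u5^2$ to $\frac{d}{dt}\hSobn u3^2$ and $-2\ep\hSobn r4^2$ to $\frac{d}{dt}\hSobn r2^2$, which after multiplication by the weights are exactly the two $\ep$-terms on the left of the claim (the dissipative contributions from the $L^2$ pieces are $\le0$ and dropped). For the delicate piece $\frac{d}{dt}\hSobn u3^2$ I would represent $\hSobn u3^2=\Lebn{\nabla\De u}2^2$, distribute $\nabla\De$ over the nonlinearity by the Leibniz rule, and use that the terms carrying all three derivatives on $u$, namely $2\Im\int(W_\la+Z_{\la,a}+\Re r)|\nabla\De u|^2$, vanish because $W_\la$, $Z_{\la,a}$ and $\Re r$ are real-valued; the same reality cancellation removes the $L^2$-level term $2\Im\int(W_\la+Z_{\la,a}+\Re r)|u|^2$. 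This cancellation is essential: were it absent, $W_\la u\sim(\la t)^{-2}u$ would produce a term of size $t^{-2}\mathcal H(t)^2$, not dominated by the good $\frac{\mu}{\la t^2}$ term. The one surviving top-order term in $\frac{d}{dt}\hSobn u3^2$, where all three derivatives fall on $\Re r$, is $2\Im\int(u+U_\la)\nabla\De(\Re r)\nabla\De\bar u$ (the $u$ and $U_\la$ contributions combining because the Schr\"odinger right-hand side carries both $u\Re r$ and $U_\la\Re r$); since we control only $\hSobn r2$ this term cannot be absorbed here, so I would keep it weighted by $e^{\mu/\la t}$ on the right, to be removed by the modified energy of Section~\ref{modified}.

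It then remains to estimate: the Leibniz commutators (at most two derivatives on $r$, or at most two on $u$ paired with derivatives on $W_\la$, $Z_{\la,a}$, $U_\la$ or an extra factor $u$), the cross terms in the $L^2$ and $\hSobn r2$ identities, and the wave-part nonlinearities $|\nabla|(|u|^2+\bar U_\la u+U_\la\bar u)$. For these I would redistribute derivatives by the Gagliardo--Nirenberg lemma and the interpolation noted after the definition of $X_{T_\ep}$ (so e.g. $\hSobn u\ell\lesssim t^{\cdots}e^{-\mu/2\la t}\mathcal H$, and likewise for $r$), bound the coefficients by $\Lebn{\nabla^kU_\la}{\infty}\lesssim(\la t)^{-k-1}$ and $\Lebn{\nabla^kW_\la}{\infty}\lesssim(\la t)^{-k-2}$ (Lemma~\ref{lem:UW}) and $\Sobn{Q_{S,0}}k\lesssim e^{-\mu/\la t}$ (Lemma~\ref{lem:inhomo}, with the same $\mu$ as in the weight), multiply by the relevant weight, and track powers of $t$. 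The outcome should be: every term linear in $(u,r)$ with a $U_\la$ or $W_\la$ coefficient is $\lesssim t^{-5/3}\mathcal H(t)^2$; the genuinely nonlinear terms ($u\Re r$ commutators, $|u|^2$) come with positive powers of $t$ and are $\lesssim\mathcal H(t)^3$; and $Q_{S,0}$ gives $\lesssim\mathcal H(t)$. Summing the four pieces and taking $T_0>0$ small enough (depending on $\la,a,\mu$, not on $\ep$) for all these bounds to hold on $(0,T_0)$ gives the inequality.

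The hard part is not any single estimate but the bookkeeping: one must check that, after the reality cancellations and the extraction of the derivative-losing term, no surviving term carries a $t$-exponent worse than $-5/3$, so that it is dominated by $\frac{\mu}{\la t^2}\mathcal H(t)^2$ in the Gronwall step to follow. The worst cases turn out to be $\nabla W_\la\cdot\nabla^2u$ paired with $\nabla^3u$ in the Schr\"odinger energy and $U_\la\nabla^3u$ paired with $\nabla^2r$ in the wave energy, both landing exactly at $t^{-5/3}$; it is precisely to make this work out that the weights $t^{-4}$, $t^{-2/3}$, $t^{-10/3}$ in $\mathcal H$ are chosen as they are.
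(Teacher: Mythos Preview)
Your proposal is correct and follows essentially the same approach as the paper: differentiate each of the four weighted pieces of $\mathcal H(t)^2$, use the reality of $W_\la+Z_{\la,a}+\Re r$ to kill the top-order diagonal terms, isolate the single derivative-losing contribution $2e^{\mu/\la t}\Im\int(u+U_\la)\nabla\De(\Re r)\nabla\De\bar u$, and bound all remaining Leibniz commutators and wave nonlinearities by power counting with Lemmas~\ref{lem:UW}--\ref{lem:inhomo} and the interpolation bounds. Your identification of the two borderline $t^{-5/3}$ cases matches the paper's computation.
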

\begin{Rem}
(i) The first term in the right hand side is unfavorable, because it contains the third derivative of $r$ and thus cannot be controlled by $\mathcal{H}(t)$.
In order to cancel this term, we will introduce a modified energy later.

(ii) We do not neglect beneficial $\ep$ terms in the left hand side, which will be exploited later.
Also, the term $t^{-2}\mathcal{H}(t)^2$ will be used to absorb the diverging quadratic term in the right hand side.
\end{Rem}

\begin{proof}
Recall the definition of $\mathcal{H}(t)$.

{\bf Estimate for $t^{-8}e^{\frac{\mu}{\la t}}\Lebn{u(t)}{2}^2$}.
Using the equation, we have
\begin{eqnarray*}
&&\frac{d}{dt}\( t^{-8}e^{\frac{\mu}{\la t}}\Lebn{u(t)}{2}^2\) +\( \frac{8}{t}+\frac{\mu}{\la t^2}\)t^{-8}e^{\frac{\mu}{\la t}}\Lebn{u(t)}{2}^2\\
&=&2 t^{-8}e^{\frac{\mu}{\la t}}\Re \int \( -\ep \De ^2u-i\(-\De u +u\Re r+(W_\la +Z_{\la,a})u+U_\la \Re r+Q_{S,0}\) \)\bar{u}\\
&=&-2\ep t^{-8}e^{\frac{\mu}{\la t}}\hSobn{u(t)}{2}^2+2 t^{-8}e^{\frac{\mu}{\la t}}\Im \int \( U_\la (\Re r)\bar{u}+Q_{S,0}\bar{u}\) .
\end{eqnarray*}
We discard the first term and estimate the integral by
\begin{eqnarray*}
&&t^{-8}e^{\frac{\mu}{\la t}}\( \Lebn{U_\la}{\I}\Lebn{r}{2}\Lebn{u}{2}+\Lebn{Q_{S,0}}{2}\Lebn{u}{2}\)\\
&\lesssim &t^{-8}e^{\frac{\mu}{\la t}}\( t^{-1}t^{\frac{10}{3}}e^{-\frac{\mu}{2\la t}}t^4e^{-\frac{\mu}{2\la t}}\mathcal{H}(t)^2+e^{-\frac{\mu}{\la t}}t^4e^{-\frac{\mu}{2\la t}}\mathcal{H}(t)\)\\
&\lesssim &t^{-\frac{5}{3}}\mathcal{H}(t)^2+\mathcal{H}(t)
\end{eqnarray*}
for sufficiently small $t$.
Consequently, we obtain
\begin{equation}\label{energy1-1}
\frac{d}{dt}\( t^{-8}e^{\frac{\mu}{\la t}}\Lebn{u(t)}{2}^2\) +\frac{\mu}{\la t^2}t^{-8}e^{\frac{\mu}{\la t}}\Lebn{u(t)}{2}^2\leq C\( t^{-\frac{5}{3}}\mathcal{H}(t)^2+\mathcal{H}(t)\) .
\end{equation}

{\bf Estimate for $e^{\frac{\mu}{\la t}}\hSobn{u(t)}{3}^2$}.
We have
\begin{eqnarray*}
&&\frac{d}{dt}\( e^{\frac{\mu}{\la t}}\hSobn{u(t)}{3}^2\)+\frac{\mu}{\la t^2}e^{\frac{\mu}{\la t}}\Lebn{\nabla^3u(t)}{2}^2\\
&=&2e^{\frac{\mu}{\la t}}\Re \int \nabla\De\big( -\ep \De^2u\\[-5pt]
&&\qquad \qquad -i\(-\De u +(u+U_\la )\Re r+(W_\la +Z_{\la,a})u+Q_{S,0}\)\big) \nabla \De \bar{u}\\
&\leq &-2\ep e^{\frac{\mu}{\la t}}\hSobn{u(t)}{5}^2+2e^{\frac{\mu}{\la t}}\Im \int (u+U_\la )\nabla \De (\Re r)\nabla\De \bar{u}\\
&&+Ce^{\frac{\mu}{\la t}}\sum _{l=1}^3\( \Lebn{\nabla ^lu\nabla ^{3-l}r}{2}+\Lebn{\nabla ^lU_\la \nabla ^{3-l}r}{2}\)\Lebn{\nabla^3u}{2}\\
&&+Ce^{\frac{\mu}{\la t}}\sum _{l=1}^3\Lebn{\nabla ^l(W_\la +Z_{\la,a})\nabla ^{3-l}u}{2}\Lebn{\nabla^3u}{2}\\
&&+Ce^{\frac{\mu}{\la t}}\Lebn{\nabla ^3Q_{S,0}}{2}\Lebn{\nabla^3u}{2}.
\end{eqnarray*}
Note that the integral of $(W_\la +Z_{\la ,a})\nabla \De u\nabla \De \bar{u}$ vanishes.
We keep the first line and estimate the others.
For the second line,
\begin{align*}
&\sum _{l=1}^3\Lebn{\nabla ^lu\nabla ^{3-l}r}{2}\leq \Lebn{\nabla ^3u}{2}\Lebn{r}{\I}+\Lebn{\nabla ^2u}{4}\Lebn{\nabla r}{4}+\Lebn{\nabla u}{\I}\Lebn{\nabla ^2r}{2}\\
\lesssim &~\Lebn{\nabla ^3u}{2}\Sobn{r}{1+}+\Lebn{\nabla ^3u}{2}^{\frac{1}{2}}\Lebn{\nabla ^2u}{2}^{\frac{1}{2}}\Lebn{\nabla ^2r}{2}^{\frac{1}{2}}\Lebn{\nabla ^1r}{2}^{\frac{1}{2}}+\Sobn{u}{2+}\Lebn{\nabla ^2r}{2}\\[5pt]
\lesssim &~\( t^{2-}+t^{\frac{2}{3}}t^{\frac{1}{3}}t^{1}+t^{\frac{4}{3}-}t^{\frac{2}{3}}\) e^{-\frac{\mu}{\la t}}\mathcal{H}(t)^2\quad \sim t^{2-}e^{-\frac{\mu}{\la t}}\mathcal{H}(t)^2,\\
&\sum _{l=1}^3\Lebn{\nabla ^lU_\la \nabla ^{3-l}r}{2}\leq \sum _{l=1}^3\Lebn{\nabla ^lU_\la}{\I}\hSobn{r}{3-l}\lesssim \sum _{l=1}^3t^{-1-l}t^{\frac{4}{3}l-\frac{2}{3}}e^{-\frac{\mu}{2\la t}}\mathcal{H}(t)\\
&\qquad \lesssim ~t^{-\frac{4}{3}}e^{-\frac{\mu}{2\la t}}\mathcal{H}(t).
\end{align*}
For the third line, we have
\begin{align*}
&\sum _{l=1}^3\Lebn{\nabla ^l(W_\la +Z_{\la,a})\nabla ^{3-l}u}{2}\leq \sum _{l=1}^3\Lebn{\nabla ^l(W_\la +Z_{\la,a})}{\I}\hSobn{u}{3-l}\\[-5pt]
&\qquad \lesssim \sum _{l=1}^3t^{-2-l}t^{\frac{4}{3}l}e^{-\frac{\mu}{2\la t}}\mathcal{H}(t)\lesssim t^{-\frac{5}{3}}e^{-\frac{\mu}{2\la t}}\mathcal{H}(t).
\end{align*}
Finally,
\[ e^{\frac{\mu}{\la t}}\Lebn{\nabla ^3Q_{S,0}}{2}\Lebn{\nabla^3u}{2}\lesssim e^{-\frac{\mu}{2\la t}}\mathcal{H}(t).\]
Consequently, we obtain the following bound for small $t$:
\begin{equation}
\begin{split}
&\frac{d}{dt}\( e^{\frac{\mu}{\la t}}\hSobn{u(t)}{3}^2\)+\frac{\mu}{\la t^2}e^{\frac{\mu}{\la t}}\Lebn{\nabla^3u(t)}{2}^2+2\ep e^{\frac{\mu}{\la t}}\hSobn{u(t)}{5}^2\\
&\leq 2e^{\frac{\mu}{\la t}}\Im \int (u+U_\la )\nabla \De (\Re r)\nabla\De \bar{u}\quad +C\( \mathcal{H}(t)^3+t^{-\frac{5}{3}}\mathcal{H}(t)^2+\mathcal{H}(t)\) .
\end{split}
\end{equation}

{\bf Estimate for $t^{-\frac{20}{3}}e^{\frac{\mu}{\la t}}\Lebn{r(t)}{2}^2$}.
\begin{eqnarray*}
&&\frac{d}{dt}\( t^{-\frac{20}{3}}e^{\frac{\mu}{\la t}}\Lebn{r(t)}{2}^2\)+\( \frac{20}{3t}+\frac{\mu}{\la t^2}\)t^{-\frac{20}{3}}e^{\frac{\mu}{\la t}}\Lebn{r(t)}{2}^2\\
&=&2 t^{-\frac{20}{3}}e^{\frac{\mu}{\la t}}\Re \int \( -\ep \De ^2r-i|\nabla |\( r+ |u|^2 +\bar{U}_\la u+U_\la \bar{u}\) \)\bar{r}\\
&=&-2\ep t^{-\frac{20}{3}}e^{\frac{\mu}{\la t}}\hSobn{r(t)}{2}^2+2 t^{-\frac{20}{3}}e^{\frac{\mu}{\la t}}\Im \int \( |u|^2 +\bar{U}_\la u+U_\la \bar{u}\) |\nabla |\bar{r}.
\end{eqnarray*}
For the estimate of the integral, we use the following:
\begin{align*}
\Lebn{|u|^2 +\bar{U}_\la u+U_\la \bar{u}}{2}&\lesssim \Sobn{u}{1+}\Lebn{u}{2}+\Lebn{U_\la}{\I}\Lebn{u}{2}\\
&\lesssim t^{\frac{20}{3}-}e^{-\frac{\mu}{\la t}}\mathcal{H}(t)^2+t^3e^{-\frac{\mu}{2\la t}}\mathcal{H}(t).
\end{align*}
Discarding the $\ep$ term, we have
\begin{equation}
\frac{d}{dt}\( t^{-\frac{20}{3}}e^{\frac{\mu}{\la t}}\Lebn{r(t)}{2}^2\)+\frac{\mu}{\la t^2}t^{-\frac{20}{3}}e^{\frac{\mu}{\la t}}\Lebn{r(t)}{2}^2\leq C\( \mathcal{H}(t)^3+t^{-\frac{5}{3}}\mathcal{H}(t)^2\) 
\end{equation}
for small $t$.

{\bf Estimate for $t^{-\frac{4}{3}}e^{\frac{\mu}{\la t}}\hSobn{r(t)}{2}^2$}.
\begin{eqnarray*}
&&\frac{d}{dt}\( t^{-\frac{4}{3}}e^{\frac{\mu}{\la t}}\hSobn{r(t)}{2}^2\)+\( \frac{4}{3t}+\frac{\mu}{\la t^2}\) t^{-\frac{4}{3}}e^{\frac{\mu}{\la t}}\hSobn{r(t)}{2}^2\\
&=&2 t^{-\frac{4}{3}}e^{\frac{\mu}{\la t}}\Re \int \De \( -\ep \De^2 r-i|\nabla |\( r+ |u|^2 +\bar{U}_\la u+U_\la \bar{u}\) \)\De\bar{r}\\
&=&-2\ep t^{-\frac{4}{3}}e^{\frac{\mu}{\la t}}\hSobn{r(t)}{4}^2+2 t^{-\frac{4}{3}}e^{\frac{\mu}{\la t}}\Im \int \De |\nabla |\( |u|^2 +\bar{U}_\la u+U_\la \bar{u}\) \De\bar{r}.
\end{eqnarray*}
Noticing that
\begin{align*}
&\Lebn{\De |\nabla |\( |u|^2 +\bar{U}_\la u+U_\la \bar{u}\)}{2}\lesssim \Sobn{u}{3}^2+\sum _{l=0}^3\Lebn{\nabla ^lU_\la}{\I}\Lebn{\nabla ^{3-l}u}{2}\\
&\qquad \lesssim e^{-\frac{\mu}{\la t}}\mathcal{H}(t)^2+t^{-1}e^{-\frac{\mu}{2\la t}}\mathcal{H}(t),
\end{align*}
we obtain
\begin{equation}\label{energy1-4}
\begin{split}
&\frac{d}{dt}\( t^{-\frac{4}{3}}e^{\frac{\mu}{\la t}}\hSobn{r(t)}{2}^2\)+\frac{\mu}{\la t^2}t^{-\frac{4}{3}}e^{\frac{\mu}{\la t}}\hSobn{r(t)}{2}^2+2\ep t^{-\frac{4}{3}}e^{\frac{\mu}{\la t}}\hSobn{r(t)}{4}^2\\
&\qquad \leq C\( \mathcal{H}(t)^3+t^{-\frac{5}{3}}\mathcal{H}(t)^2\) 
\end{split}
\end{equation}
for small $t$.

We conclude the proof by collecting \eqref{energy1-1}--\eqref{energy1-4}.
\end{proof}

We introduce some additional terms into the energy so that the worst term will be cancelled out.
Define the modified energy $E(t)=E[u,r](t)$ by
\[ E(t):=\mathcal{H}(t)^2+2e^{\frac{\mu}{\la t}}\Re \int _{\T^2}(u+U_\la )\De \bar{u}\De \Re r+e^{\frac{\mu}{\la t}}\Sobn{u(t)}{1}^{10}. \]
In the energy estimate, the second term of $E(t)$ will produce an unfavorable term, which is exactly the same as that coming from $\mathcal{H}(t)^2$, except for the sign.
Therefore, we can eliminate the term including $\nabla ^3r(t)$ and close the energy estimate.
The last term in $E(t)$ ensures the positivity of the modified energy, as we will see in the next lemma.
\begin{Lem}\label{lem:modenergy}
we have
\[ E(t)\sim \mathcal{H}(t)^2+e^{\frac{\mu}{\la t}}\Sobn{u(t)}{1}^{10}\]
for any sufficiently small $t>0$.
\end{Lem}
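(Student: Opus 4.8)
\emph{Proof strategy (plan).} The plan is to treat the cross term
\[ \mathcal{I}(t):=2e^{\frac{\mu}{\la t}}\Re \int _{\T^2}(u+U_\la )\De \bar{u}\,\De \Re r\,dx \]
as an error and to show that $|\mathcal{I}(t)|\le \tfrac12\mathcal{H}(t)^2$ for all sufficiently small $t>0$. Granting this, since $E(t)=\mathcal{H}(t)^2+\mathcal{I}(t)+e^{\frac{\mu}{\la t}}\Sobn{u(t)}{1}^{10}$ one gets at once
\[ \tfrac12\mathcal{H}(t)^2+e^{\frac{\mu}{\la t}}\Sobn{u(t)}{1}^{10}\le E(t)\le \tfrac32\mathcal{H}(t)^2+e^{\frac{\mu}{\la t}}\Sobn{u(t)}{1}^{10}, \]
which is the asserted equivalence (the term $e^{\frac{\mu}{\la t}}\Sobn{u}{1}^{10}$ simply passes through both sides).

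First I would extract from the definition of $\mathcal{H}$, together with the Plancherel interpolation $\Lebn{\nabla ^2\phi}{2}\le \Lebn{\phi}{2}^{1/3}\Lebn{\nabla ^3\phi}{2}^{2/3}$, the pointwise bounds, valid for $0<t\le 1$,
\[ \Lebn{u(t)}{2}\le t^4e^{-\frac{\mu}{2\la t}}\mathcal{H}(t),\qquad \Lebn{\De u(t)}{2}\le t^{\frac43}e^{-\frac{\mu}{2\la t}}\mathcal{H}(t),\qquad \Lebn{\De r(t)}{2}\le t^{\frac23}e^{-\frac{\mu}{2\la t}}\mathcal{H}(t), \]
and hence $\Sobn{u(t)}{2}\lesssim e^{-\frac{\mu}{2\la t}}\mathcal{H}(t)$. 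Using the embedding $H^2(\T^2)\hookrightarrow L^\infty(\T^2)$, the bound $\Lebn{U_\la(t)}{\infty}\lesssim (\la t)^{-1}$ from Lemma~\ref{lem:UW}, and H\"older's inequality, I would then estimate
\[ |\mathcal{I}(t)|\le 2e^{\frac{\mu}{\la t}}\bigl(\Lebn{u(t)}{\infty}+\Lebn{U_\la(t)}{\infty}\bigr)\Lebn{\De u(t)}{2}\Lebn{\De r(t)}{2}\lesssim \bigl(e^{-\frac{\mu}{2\la t}}\mathcal{H}(t)+(\la t)^{-1}\bigr)t^2\mathcal{H}(t)^2, \]
where the two factors $e^{-\frac{\mu}{2\la t}}$ coming from $\Lebn{\De u}{2}\Lebn{\De r}{2}$ exactly cancel the weight $e^{\frac{\mu}{\la t}}$; that is, $|\mathcal{I}(t)|\lesssim t^2e^{-\frac{\mu}{2\la t}}\mathcal{H}(t)^3+\la ^{-1}t\,\mathcal{H}(t)^2=\bigl(t^2e^{-\frac{\mu}{2\la t}}\mathcal{H}(t)+\la ^{-1}t\bigr)\mathcal{H}(t)^2$.

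To conclude I would use that $(u,r)$ is a \emph{solution} of \eqref{eq:KMep}, so that by Proposition~\ref{prop:regularize} we have $\mathcal{H}(t)\to 0$ as $t\to 0$. Hence the continuous function $t\mapsto t^2e^{-\frac{\mu}{2\la t}}\mathcal{H}(t)+\la ^{-1}t$ tends to $0$ as $t\to 0$, so there is $t_0>0$ with $|\mathcal{I}(t)|\le \tfrac12\mathcal{H}(t)^2$ for $0<t\le t_0$, which is exactly the claim. (Equivalently, one may phrase and prove the lemma under the a priori hypothesis $\mathcal{H}(t)\le 1$, which is precisely what is propagated by the continuity argument of the next subsection; then $t^2e^{-\frac{\mu}{2\la t}}\mathcal{H}(t)^3\le t^2\mathcal{H}(t)^2$ and the conclusion holds with a threshold depending only on $\la$.)

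The one genuine difficulty is the cubic contribution $t^2e^{-\frac{\mu}{2\la t}}\mathcal{H}(t)^3$: each of the three factors in $\mathcal{I}(t)$ is of size $e^{-\frac{\mu}{2\la t}}\mathcal{H}(t)$ up to powers of $t$, and the weight $e^{\frac{\mu}{\la t}}$ can absorb only two of the three copies of $e^{-\frac{\mu}{2\la t}}$, leaving one copy and a cubic power of $\mathcal{H}$. This cubic quantity is \emph{not} dominated by $\mathcal{H}(t)^2+e^{\frac{\mu}{\la t}}\Sobn{u}{1}^{10}$ for unrestricted $\mathcal{H}(t)$---and the term $e^{\frac{\mu}{\la t}}\Sobn{u}{1}^{10}$ cannot rescue it, since $\Sobn{u(t)}{1}\lesssim e^{-\frac{\mu}{2\la t}}\mathcal{H}(t)$ already forces $e^{\frac{\mu}{\la t}}\Sobn{u(t)}{1}^{10}\lesssim e^{-\frac{4\mu}{\la t}}\mathcal{H}(t)^{10}$---so the argument must, and does, exploit the smallness of $\mathcal{H}(t)$ near $t=0$; the role of $e^{\frac{\mu}{\la t}}\Sobn{u}{1}^{10}$ in $E$ is to guarantee positivity in the modified energy estimate of the following subsection rather than in this lemma.
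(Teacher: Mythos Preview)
Your estimate of the cross term is correct, but the route differs from the paper in an important way and your conditional conclusion is weaker than what is actually needed.

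The paper does \emph{not} invoke smallness of $\mathcal{H}(t)$; instead it uses an $L^4$--$L^4$--$L^2$ H\"older split on the $u$-piece, together with Gagliardo--Nirenberg
\[
\Lebn{u}{4}\Lebn{\De u}{4}\lesssim \Sobn{u}{1}^{5/4}\Lebn{\nabla^3 u}{2}^{3/4},
\]
and then Young's inequality with exponents $(8,\tfrac{8}{3},2)$ to obtain
\[
Ce^{\frac{\mu}{\la t}}\Lebn{u}{4}\Lebn{\De u}{4}\Lebn{\De r}{2}\le \tfrac12 e^{\frac{\mu}{\la t}}\Sobn{u}{1}^{10}+\tfrac14 e^{\frac{\mu}{\la t}}\Lebn{\nabla^3 u}{2}^2+Ct^{4/3}\cdot t^{-4/3}e^{\frac{\mu}{\la t}}\Lebn{\De r}{2}^2,
\]
which together with the easy $U_\la$-part gives $|\mathcal{I}(t)|\le \tfrac12 e^{\frac{\mu}{\la t}}\Sobn{u}{1}^{10}+\tfrac12\mathcal{H}(t)^2$ unconditionally for small $t$. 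The exponent $10$ in the modified energy is thus not incidental: it is exactly $\tfrac54\cdot 8$, chosen so that the Young split absorbs the $u$-factor into the tenth-power term. So your closing remark---that $e^{\frac{\mu}{\la t}}\Sobn{u}{1}^{10}$ plays no role in \emph{this} lemma---is precisely backwards: in the paper's argument it is the mechanism that makes the equivalence unconditional.

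Your version, by contrast, lands on $|\mathcal{I}(t)|\lesssim t^2e^{-\frac{\mu}{2\la t}}\mathcal{H}(t)^3+\la^{-1}t\,\mathcal{H}(t)^2$, which forces either (a) appealing to $\mathcal{H}(t)\to 0$ from Proposition~\ref{prop:regularize}, or (b) assuming $\mathcal{H}(t)\le 1$ a priori. Option (a) yields a threshold $t_0$ that may depend on the particular solution and hence on $\ep$, whereas the whole point of this section is an a~priori bound \emph{uniform in $\ep$}; Lemma~\ref{lem:modenergy} is invoked at the end of Proposition~\ref{prop:modenergyest} and again in Corollary~\ref{cor:apriori}, both of which need the smallness threshold to be $\ep$-independent. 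Option (b) is salvageable but turns the argument into a bootstrap: you would have to thread the hypothesis $\mathcal{H}(t)\le 1$ through the proof of Proposition~\ref{prop:modenergyest} and close it by continuity in Corollary~\ref{cor:apriori}. That can be done, but it is extra work you have not written, and it is exactly what the paper's unconditional lemma avoids.
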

\begin{proof}
By the H\"older inequality,
\begin{eqnarray*}
&&\left| 2e^{\frac{\mu}{\la t}}\Re \int (u+U_\la )\De \bar{u}\De \Re r\right| \\
&\leq &Ce^{\frac{\mu}{\la t}}\( \Lebn{u}{4}\Lebn{\De u}{4}\Lebn{\De r}{2}+t^{-1}\Lebn{\De u}{2}\Lebn{\De r}{2}\) .
\end{eqnarray*}
Using the Gagliardo-Nirenberg and the Young inequalities, we have
\begin{eqnarray*}
&&Ce^{\frac{\mu}{\la t}}\Lebn{u}{4}\Lebn{\De u}{4}\Lebn{\De r}{2}\\
&\leq &Ce^{\frac{\mu}{\la t}}\( \Lebn{u}{2}^{1/2}\Lebn{\nabla u}{2}^{1/2}+\Lebn{u}{2}\) \Lebn{\nabla u}{2}^{1/4}\Lebn{\nabla ^3u}{2}^{3/4}\Lebn{\De r}{2}\\
&\leq &Ce^{\frac{\mu}{\la t}}\Sobn{u}{1}^{5/4}\Lebn{\nabla ^3u}{2}^{3/4}\Lebn{\De r}{2}\\
&\leq &\frac{1}{2}e^{\frac{\mu}{\la t}}\( \Sobn{u}{1}^{5/4}\) ^8+\frac{1}{4}e^{\frac{\mu}{\la t}}\( \Lebn{\nabla ^3u}{2}^{\frac{3}{4}}\) ^{\frac{8}{3}}+Ct^{\frac{4}{3}}\cdot t^{-\frac{4}{3}}e^{\frac{\mu}{\la t}}\Lebn{\De r}{2}^2\\
&\leq &\frac{1}{2}e^{\frac{\mu}{\la t}}\Sobn{u}{1}^{10}+\frac{1}{4}\mathcal{H}(t)^2
\end{eqnarray*}
if $t>0$ is sufficiently small.
Also,
\[ Ct^{-1}e^{\frac{\mu}{\la t}}\Lebn{\De u}{2}\Lebn{\De r}{2}\leq Ct\cdot t^{-\frac{4}{3}}e^{\frac{\mu}{2\la t}}\Lebn{\De u}{2}\cdot t^{-\frac{2}{3}}e^{\frac{\mu}{2\la t}}\Lebn{\De r}{2}\leq \frac{1}{4}\mathcal{H}(t)^2\]
if $t>0$ is sufficiently small, which implies the claim.
\end{proof}

Now, we state the key estimate.
\begin{Prop}\label{prop:modenergyest}
There exists $T_0>0$ independent of $\ep$ such that we have
\[ \frac{d}{dt}E(t)+\ep e^{\frac{\mu}{\la t}}\hSobn{u(t)}{5}^2+\ep t^{-\frac{4}{3}}e^{\frac{\mu}{\la t}}\hSobn{r(t)}{4}^2\leq C(1+E(t))^3\]
for $t\in (0,\min \{ T_0,\,T\} )$.
\end{Prop}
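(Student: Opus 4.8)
The plan is to differentiate $E(t)=\mathcal{H}(t)^2+C(t)+e^{\frac{\mu}{\la t}}\Sobn{u(t)}{1}^{10}$, where $C(t):=2e^{\frac{\mu}{\la t}}\Re\int_{\T^2}(u+U_\la)\De\bar u\,\De\Re r$, treating the three pieces separately. For $\mathcal{H}(t)^2$ I would just quote Proposition~\ref{prop:energyest}: its only unfavorable contribution is the term $B(t):=2e^{\frac{\mu}{\la t}}\Im\int(u+U_\la)\nabla\De(\Re r)\nabla\De\bar u$, which is not controllable by $\mathcal{H}(t)$ because it carries $\nabla^3 r$. The whole point of the added cross term is that $\frac{d}{dt}C(t)$ reproduces $-B(t)$ up to acceptable terms, so that $B(t)$ cancels. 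I will also keep, rather than discard, the favorable terms $\frac{\mu}{\la t^2}\mathcal{H}(t)^2$ and the $\ep$-terms from Proposition~\ref{prop:energyest}, as well as the favorable term produced by differentiating the weight in $e^{\frac{\mu}{\la t}}\Sobn{u}{1}^{10}$.

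For $\frac{d}{dt}C(t)$ I would use \eqref{eq:KMep} to write $\d_t u=i\De u-\ep\De^2u-i\big(u\Re r+(W_\la+Z_{\la,a})u+U_\la\Re r+Q_{S,0}\big)$, hence $\d_t\bar u=-i\De\bar u-\ep\De^2\bar u+i\overline{(\cdots)}$, and, using that $|u|^2+\bar U_\la u+U_\la\bar u$ is real, $\d_t\Re r=|\nabla|\Im r-\ep\De^2\Re r$. Distributing $\d_t$ over the weight and the three factors of $C(t)$, the decisive contribution is the one in which $\d_t$ hits $\De\bar u$: its principal part $2e^{\frac{\mu}{\la t}}\Re\int(u+U_\la)(-i\De^2\bar u)\De\Re r=2e^{\frac{\mu}{\la t}}\Im\int(u+U_\la)\De^2\bar u\,\De\Re r$ becomes, after a single integration by parts,
\[ -2e^{\frac{\mu}{\la t}}\Im\int(u+U_\la)\nabla\De(\Re r)\nabla\De\bar u-2e^{\frac{\mu}{\la t}}\Im\int\nabla(u+U_\la)(\De\Re r)\nabla\De\bar u=-B(t)+(\text{good}), \]
where the second integral is controlled by $\Lebn{\nabla u}{\I}\lesssim\Sobn{u}{2+}$ and $\Lebn{\nabla U_\la}{\I}\lesssim(\la t)^{-2}$ (Lemma~\ref{lem:UW}) together with the interpolated weighted bounds noted after the definition of $\mathcal{H}$, yielding a term $\lesssim\mathcal{H}(t)^3$ plus a mildly singular term $\lesssim t^{-4/3}\mathcal{H}(t)^2$. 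The contribution where $\d_t$ hits $u+U_\la$: the $i\De u$ piece vanishes since $\Re\big(i|\De u|^2\De\Re r\big)=0$, the $\d_tU_\la$ piece is bounded by Lemma~\ref{lem:UW}, and the remaining pieces are treated as below. The contribution where $\d_t$ hits $\De\Re r$ equals $2e^{\frac{\mu}{\la t}}\Re\int(u+U_\la)\De\bar u\,|\nabla|^3\Im r$ up to an $\ep$-term; moving $|\nabla|^2=-\De$ off $\Im r$ using self-adjointness of $|\nabla|$ leaves $\lesssim e^{\frac{\mu}{\la t}}\Sobn{(u+U_\la)\De\bar u}{1}\hSobn{r}{2}$, again $\lesssim\mathcal{H}(t)^3+t^{-4/3}\mathcal{H}(t)^2$. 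The $\ep$-terms arising in this computation (with up to six derivatives on $u$ or four on $r$) are absorbed, via interpolation and Young's inequality with a small parameter, into the favorable $\ep e^{\frac{\mu}{\la t}}\hSobn{u}{5}^2$ and $\ep t^{-4/3}e^{\frac{\mu}{\la t}}\hSobn{r}{4}^2$ (here $\ep\le1$ and $e^{\frac{\mu}{\la t}}\ge1$ leave ample room); the residual nonlinear pieces (involving $W_\la$, $Z_{\la,a}$, $Q_{S,0}$, $U_\la$ and $u,r$) are $\lesssim(1+E(t))^3$ by Lemmas~\ref{lem:UW} and \ref{lem:inhomo} and the weighted norms; and the weight-derivative term $-\frac{\mu}{\la t^2}C(t)$ is bounded, by the computation in the proof of Lemma~\ref{lem:modenergy}, by $\frac{\mu}{2\la t^2}e^{\frac{\mu}{\la t}}\Sobn{u}{1}^{10}+Ct^{-2}\mathcal{H}(t)^2$ with coefficient strictly less than $\frac{\mu}{\la t^2}$ in front of the first summand (using the small $t$-factor furnished by the Gagliardo--Nirenberg estimate).

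For the last piece, $\frac{d}{dt}\big(e^{\frac{\mu}{\la t}}\Sobn{u}{1}^{10}\big)=-\frac{\mu}{\la t^2}e^{\frac{\mu}{\la t}}\Sobn{u}{1}^{10}+5e^{\frac{\mu}{\la t}}\Sobn{u}{1}^{8}\frac{d}{dt}\Sobn{u}{1}^2$, and the favorable first term absorbs the $\frac{\mu}{2\la t^2}e^{\frac{\mu}{\la t}}\Sobn{u}{1}^{10}$ produced above. In $\frac{d}{dt}\Sobn{u}{1}^2=\frac{d}{dt}\big(\Lebn{u}{2}^2+\Lebn{\nabla u}{2}^2\big)$ the contributions of $\De u$ and $i\ep\De^2u$ vanish or are favorable, the contributions of $u\Re r$ and $(W_\la+Z_{\la,a})u$ either vanish by reality or reduce, after one integration by parts at the $H^1$-level, to commutator terms bounded by Lemma~\ref{lem:UW} and the weighted norms, and the remaining terms ($U_\la\Re r$, $Q_{S,0}$) are bounded by Lemmas~\ref{lem:UW}, \ref{lem:inhomo}; since $\Sobn{u}{1}^{8}\lesssim\big(e^{-\frac{\mu}{\la t}}E(t)\big)^{4/5}$ by Lemma~\ref{lem:modenergy}, the whole piece is $\lesssim(1+E(t))^3$. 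Collecting: $B(t)$ cancels; the mildly singular terms $t^{-5/3}\mathcal{H}(t)^2$ (from Proposition~\ref{prop:energyest}) and the $t^{-4/3}\mathcal{H}(t)^2$-type and $Ct^{-2}\mathcal{H}(t)^2$ terms above are absorbed into $\frac{\mu}{\la t^2}\mathcal{H}(t)^2$ for $t\le T_0$ sufficiently small; the $\frac{\mu}{2\la t^2}e^{\frac{\mu}{\la t}}\Sobn{u}{1}^{10}$ into $\frac{\mu}{\la t^2}e^{\frac{\mu}{\la t}}\Sobn{u}{1}^{10}$; the unfavorable $\ep$-terms into a fraction of $\ep e^{\frac{\mu}{\la t}}\hSobn{u}{5}^2+\ep t^{-4/3}e^{\frac{\mu}{\la t}}\hSobn{r}{4}^2$; and, by Lemma~\ref{lem:modenergy} ($\mathcal{H}(t)^2+e^{\frac{\mu}{\la t}}\Sobn{u}{1}^{10}\sim E(t)$), what remains is $\lesssim(1+E(t))^3$, which is the claim.

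The main obstacle is the bookkeeping in $\frac{d}{dt}C(t)$: one must carry out the single integration by parts precisely enough to see the clean cancellation of $B(t)$, and then verify that each of the numerous remainder terms — especially those carrying $\nabla^4u$, $\nabla^6u$, $\nabla^3r$ or $\nabla^4r$, none of which is directly controlled by $\mathcal{H}(t)$ — is either $\lesssim(1+E(t))^3$, or absorbable by an $\ep$-favorable term via interpolation and the parabolic smoothing, or absorbable by the $t^{-2}$-favorable terms; keeping track of the correct powers of $t$ and of the weights $e^{\frac{\mu}{2\la t}}$ in every product is where the real work lies.
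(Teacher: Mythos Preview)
Your proposal is correct and follows the same architecture as the paper: differentiate $\mathcal{H}(t)^2$ via Proposition~\ref{prop:energyest}, differentiate the cross term $C(t)$ and extract the cancellation of the bad $\nabla^3r$ term through one integration by parts of the $-i\De^2\bar u$ contribution, absorb the unfavorable $\ep$-terms (with $\nabla^4u$, $\nabla^6u$, $\nabla^4r$) into the dissipative $\ep$-terms from Proposition~\ref{prop:energyest} via interpolation and Young, and bound the $\Sobn{u}{1}^{10}$-piece crudely; finally absorb the mildly singular $t^{-5/3}\mathcal{H}^2$, $t^{-4/3}\mathcal{H}^2$ terms into $\frac{\mu}{\la t^2}\mathcal{H}^2$ for small $t$ and invoke Lemma~\ref{lem:modenergy}.

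One minor divergence worth noting: for the weight-derivative contribution $-\frac{\mu}{\la t^2}C(t)$ you invoke the bound from Lemma~\ref{lem:modenergy} and then absorb the resulting $\frac{\mu}{2\la t^2}\mathcal{H}(t)^2$ and $\frac{\mu}{2\la t^2}e^{\frac{\mu}{\la t}}\Sobn{u}{1}^{10}$ into the two favorable $\frac{\mu}{\la t^2}$-terms. The paper instead estimates this term directly via $\Lebn{u+U_\la}{\I}\Lebn{\De u}{2}\Lebn{\De r}{2}$ and obtains only a $t^{-1}\mathcal{H}(t)^2$ contribution, which leaves the full $\frac{\mu}{\la t^2}\mathcal{H}(t)^2$ available for the remaining absorptions. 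Both routes close, but the paper's direct estimate is slightly cleaner and avoids the tight bookkeeping of constants (exactly half of each favorable term consumed). Also, the $\d_tU_\la$ bound you cite is not literally in Lemma~\ref{lem:UW}; it requires a separate direct computation giving $\Lebn{\d_tU_\la}{\I}\lesssim t^{-3}$, as the paper notes.
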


\begin{proof}
We start the proof with the estimate for $e^{\frac{\mu}{\la t}}\Sobn{u(t)}{1}^{10}$,
\begin{eqnarray*}
&&\frac{d}{dt}\( e^{\frac{\mu}{\la t}}\Sobn{u(t)}{1}^{10}\) +\frac{\mu}{\la t^2}e^{\frac{\mu}{\la t}}\Sobn{u(t)}{1}^{10}=5e^{\frac{\mu}{\la t}}\Sobn{u(t)}{1}^8\frac{d}{dt}\( \Sobn{u(t)}{1}^2\) \\
&=&5e^{\frac{\mu}{\la t}}\Sobn{u(t)}{1}^8\cdot 2\Re \int \\
&&\< \nabla \> \( -\ep \De^2 u-i\(-\De u +(u+U_\la )\Re r+(W_\la +Z_{\la,a})u+Q_{S,0}\)\)\< \nabla \> \bar{u}\\
&\leq &Ce^{\frac{\mu}{\la t}}\Sobn{u(t)}{1}^8\Lebn{(u+U_\la )\Re r+(W_\la +Z_{\la,a})u+Q_{S,0}}{2}\Sobn{u}{2}.
\end{eqnarray*}
In the last inequality we have discarded the $\ep$ term.
Since we have
\begin{eqnarray*}
&&\Lebn{(u+U_\la )\Re r+(W_\la +Z_{\la,a})u+Q_{S,0}}{2}\\
&\lesssim &\( \Sobn{u}{1+}+t^{-1}\)\Lebn{r}{2}+t^{-2}\Lebn{u}{2}+\Lebn{Q_{S,0}}{2}\\
&\lesssim &t^{6-}e^{-\frac{\mu}{\la t}}\mathcal{H}(t)^2+t^2e^{-\frac{\mu}{2\la t}}\mathcal{H}(t)+e^{-\frac{\mu}{\la t}},
\end{eqnarray*}
the last line is estimated by $C\( 1+\Sobn{u(t)}{1}^{10}\)\( \mathcal{H}(t)^3+\mathcal{H}(t)\)$.
In particular,
\begin{equation}\label{energy2}
\frac{d}{dt}\( e^{\frac{\mu}{\la t}}\Sobn{u(t)}{1}^{10}\)\leq C\( 1+e^{\frac{\mu}{\la t}}\Sobn{u(t)}{1}^{10}\)\( \mathcal{H}(t)^3+\mathcal{H}(t)\) .
\end{equation}


Next, we estimate
\begin{align}
\frac{d}{dt}&\( 2e^{\frac{\mu}{\la t}}\Re \int (u+U_\la )\De \bar{u}\De \Re r\) \notag \\
=&-\frac{2\mu}{\la t^2}e^{\frac{\mu}{\la t}}\Re \int (u+U_\la )\De \bar{u}\De \Re r\label{energy3a}\\
&+2e^{\frac{\mu}{\la t}}\Re \int \d_tu \De \bar{u}\De \Re r\label{energy3b}\\
&+2e^{\frac{\mu}{\la t}}\Re \int \d_tU_\la \De \bar{u}\De \Re r\label{energy3c}\\
&+2e^{\frac{\mu}{\la t}}\Re \int (u+U_\la )\De \d_t\bar{u}\De \Re r\label{energy3d}\\
&+2e^{\frac{\mu}{\la t}}\Re \int (u+U_\la )\De \bar{u}\De \d_t \Re r .\label{energy3e}
\end{align}

{\bf Estimate for \eqref{energy3a} and \eqref{energy3c}} is easy.
Note that a direct calculation implies $\Lebn{\d_tU_\la (t)}{\I}\lesssim t^{-3}$.
We have
\begin{equation}\label{energy3-1}
|(\ref{energy3a})+(\ref{energy3c})|\lesssim e^{\frac{\mu}{\la t}}\( t^{-2}\Lebn{u}{\I}+t^{-3}\) \Lebn{\De u}{2}\Lebn{\De r}{2}\lesssim \mathcal{H}(t)^3+t^{-1}\mathcal{H}(t)^2.
\end{equation}

{\bf Estimate for \eqref{energy3b}}.
From the equation, \eqref{energy3b} becomes
\[ 2e^{\frac{\mu}{\la t}}\Re \int \!\Big[ -\ep \De ^2 u-i\(-\De u +(u+U_\la )\Re r+(W_\la +Z_{\la,a})u+Q_{S,0}\) \Big] \De \bar{u}\De \Re r.\]
Only the first term is a bit tricky, since it contains $\nabla ^4u$.
Using the Gagliardo-Nirenberg inequality followed by the Young, we evaluate it as 
\begin{align*}
2\ep e^{\frac{\mu}{\la t}}\left| \int \De ^2 u\De \bar{u}\De \Re r\right| &\leq 2\ep e^{\frac{\mu}{\la t}}\Lebn{\De ^2 u}{4}\Lebn{\De u}{4}\Lebn{\De r}{2}\\
&\leq C\ep e^{\frac{\mu}{\la t}}\Lebn{\nabla ^5u}{2}^{\frac{3}{4}}\Lebn{\nabla ^3u}{2}^{\frac{3}{4}}\Lebn{\nabla ^2u}{2}^{\frac{1}{2}}\Lebn{\De r}{2}\\
&\leq \frac{\ep}{2}e^{\frac{\mu}{\la t}}\hSobn{u(t)}{5}^2+C\ep e^{\frac{\mu}{\la t}}\Lebn{\nabla ^3u}{2}^{\frac{6}{5}}\Lebn{\nabla ^2u}{2}^{\frac{4}{5}}\Lebn{\De r}{2}^{\frac{8}{5}}\\
&\leq \frac{\ep}{2}e^{\frac{\mu}{\la t}}\hSobn{u(t)}{5}^2+C\mathcal{H}(t)^{\frac{18}{5}}.
\end{align*}
The next term with $\De u$ vanishes.
The other parts are treated as usual,
\begin{eqnarray*}
&&2e^{\frac{\mu}{\la t}}\left| \int \Big[ (u+U_\la )\Re r+(W_\la +Z_{\la,a})u+Q_{S,0}\Big] \De \bar{u}\De \Re r\right| \\
&\lesssim &e^{\frac{\mu}{\la t}}\Lebn{(u+U_\la )\Re r+(W_\la +Z_{\la,a})u+Q_{S,0}}{\I}\Lebn{\De u}{2}\Lebn{\De r}{2}\\
&\lesssim &\mathcal{H}(t)^4+\mathcal{H}(t)^3+\mathcal{H}(t)^2.
\end{eqnarray*}
Collecting them, we have
\begin{equation}
|(\ref{energy3b})|\leq \frac{\ep}{2}e^{\frac{\mu}{\la t}}\hSobn{u(t)}{5}^2+C\( \mathcal{H}(t)^4+\mathcal{H}(t)^2\) .
\end{equation}
The $\ep$ term will be absorbed into the similar one appearing in the energy estimate for $\mathcal{H}(t)^2$ (Proposition~\ref{prop:energyest}).

{\bf Estimate for \eqref{energy3d}}.
This is equal to
\begin{align*}
&2e^{\frac{\mu}{\la t}}\Re \int (u+U_\la )\De \Big[ -\ep \De ^2 \bar{u}\Big] \De \Re r\\
&+2e^{\frac{\mu}{\la t}}\Re \int (u+U_\la )\De \Big[ -i\De \bar{u}\Big] \De \Re r\\
&+2e^{\frac{\mu}{\la t}}\Re \int (u+U_\la )\De \Big[ i\( (\bar{u}+\bar{U}_\la) \Re r+(W_\la +Z_{\la,a})\bar{u}+\bar{Q}_{S,0}\) \Big] \De \Re r\\
&=:(\ref{energy3d}\text{a})+(\ref{energy3d}\text{b})+(\ref{energy3d}\text{c}).
\end{align*}

(\ref{energy3d}a) contains the highest derivative.
For this, we first reduce derivatives on $\bar{u}$ by an integration by parts, and then make an argument similar to the case \eqref{energy3b},
\begin{align*}
&2\ep e^{\frac{\mu}{\la t}}\left| \int (u+U_\la )\De ^3\bar{u}\De \Re r\right| \\
&\leq 2\ep e^{\frac{\mu}{\la t}}\left| \int \nabla (u+U_\la )\nabla \De ^2\bar{u}\De \Re r\right| +2\ep e^{\frac{\mu}{\la t}}\left| \int (u+U_\la )\nabla \De ^2\bar{u}\nabla \De \Re r\right| \\
&\leq 2\ep e^{\frac{\mu}{\la t}}\Lebn{\nabla \De ^2u}{2}\( \Lebn{\nabla (u+U_\la )}{\I}\Lebn{\De r}{2}+\Lebn{u+U_\la}{\I}\Lebn{\nabla \De r}{2}\) \\
&\leq \frac{\ep}{4}e^{\frac{\mu}{\la t}}\hSobn{u(t)}{5}^2+C\ep e^{\frac{\mu}{\la t}}\( \Lebn{\nabla (u+U_\la )}{\I}^2\Lebn{\De r}{2}^2+\Lebn{u+U_\la}{\I}^2\Lebn{\nabla \De r}{2}^2\) .
\end{align*}
Again, the first term will be absorbed.
However, the remaining parts are still beyond the control of $\mathcal{H}(t)$.
For these terms, we have to exploit another gain of $2\ep t^{-\frac{4}{3}}e^{\frac{\mu}{\la t}}\hSobn{r(t)}{4}^2$ in the energy estimate for $\mathcal{H}(t)^2$.
That's exactly why in the previous section we have also added the same amount of viscosity to the wave equation as to the Schr\"odinger part, in spite of no loss of derivative in the wave part.
We estimate them as follows:
\begin{eqnarray*}
&&C\ep e^{\frac{\mu}{\la t}}\Lebn{\nabla (u+U_\la )}{\I}^2\Lebn{\De r}{2}^2\\
&\leq &C\ep e^{\frac{\mu}{\la t}}\( \Sobn{u}{2+}+t^{-2}\) ^2\Lebn{\nabla ^4r}{2}\Lebn{r}{2}\\
&\leq &\frac{\ep}{4}t^{-\frac{4}{3}}e^{\frac{\mu}{\la t}}\hSobn{r(t)}{4}^2+C\ep t^{\frac{4}{3}}e^{\frac{\mu}{\la t}}\( \Sobn{u}{2+}+t^{-2}\) ^4\Lebn{r}{2}^2\\
&\leq &\frac{\ep}{4}t^{-\frac{4}{3}}e^{\frac{\mu}{\la t}}\hSobn{r(t)}{4}^2+C\( \mathcal{H}(t)^6+\mathcal{H}(t)^2\) ,
\end{eqnarray*}
\begin{eqnarray*}
&&C\ep e^{\frac{\mu}{\la t}}\Lebn{u+U_\la}{\I}^2\Lebn{\nabla \De r}{2}^2\\
&\leq &C\ep e^{\frac{\mu}{\la t}}\( \Sobn{u}{1+}+t^{-1}\) ^2\Lebn{\nabla ^4r}{2}\Lebn{\nabla ^2r}{2}\\
&\leq &\frac{\ep}{4}t^{-\frac{4}{3}}e^{\frac{\mu}{\la t}}\hSobn{r(t)}{4}^2+C\ep t^{\frac{4}{3}}e^{\frac{\mu}{\la t}}\( \Sobn{u}{1+}+t^{-1}\) ^4\Lebn{\nabla ^2r}{2}^2\\
&\leq &\frac{\ep}{4}t^{-\frac{4}{3}}e^{\frac{\mu}{\la t}}\hSobn{r(t)}{4}^2+C\( \mathcal{H}(t)^6+t^{-\frac{4}{3}}\mathcal{H}(t)^2\) .
\end{eqnarray*}
Consequently, we have
\[ |(\ref{energy3d}\text{a})|\leq \frac{\ep}{4}e^{\frac{\mu}{\la t}}\hSobn{u(t)}{5}^2+\frac{\ep}{2}t^{-\frac{4}{3}}e^{\frac{\mu}{\la t}}\hSobn{r(t)}{4}^2+C\( \mathcal{H}(t)^6+t^{-\frac{4}{3}}\mathcal{H}(t)^2\) .\]

By an integration by parts, (\ref{energy3d}b) is equal to
\[ -2e^{\frac{\mu}{\la t}}\Im \int \nabla (u+U_\la )\nabla \De \bar{u}\De \Re r-2e^{\frac{\mu}{\la t}}\Im \int (u+U_\la )\nabla \De \bar{u}\nabla \De \Re r.\]
The first term is easily estimated, while the second one is unfavorable because of $\nabla ^3r$ and no $\ep$.
In fact, it is this term that cancels with the similar term occurring in the energy estimate for $\mathcal{H}(t)^2$.
We have
\begin{align*}
(\ref{energy3d}\text{b})&\leq 2e^{\frac{\mu}{\la t}}\Lebn{\nabla (u+U_\la )}{\I}\Sobn{u}{3}\Sobn{r}{2}-2e^{\frac{\mu}{\la t}}\Im \int (u+U_\la )\nabla \De \bar{u}\nabla \De \Re r\\
&\leq C\( \mathcal{H}(t)^3+t^{-\frac{4}{3}}\mathcal{H}(t)^2\) -2e^{\frac{\mu}{\la t}}\Im \int (u+U_\la )\nabla \De \bar{u}\cdot \nabla \De \Re r.
\end{align*}

For the third one, we see that
\begin{align*}
|(\ref{energy3d}\text{c})|\leq ~&2e^{\frac{\mu}{\la t}}\left| \int (u+U_\la )\De \Big[ \bar{u}\Re r+\bar{Q}_{S,0}\Big] \De \Re r\right| \\
&+2e^{\frac{\mu}{\la t}}\left| \int u\De \Big[ \bar{U}_\la \Re r+(W_\la +Z_{\la,a})\bar{u}\Big] \De \Re r\right| \\
&+2e^{\frac{\mu}{\la t}}\left| \int U_\la \De \Big[ \bar{U}_\la \Re r+(W_\la +Z_{\la,a})\bar{u}\Big] \De \Re r\right| \\
\lesssim ~&e^{\frac{\mu}{\la t}}\Lebn{u+U_\la}{\I}\( \Sobn{u}{2}\Sobn{r}{2}+\Sobn{Q_{S,0}}{2}\) \Lebn{\De r}{2}\\
&+e^{\frac{\mu}{\la t}}\Lebn{u}{\I}\( \Sobn{U_\la}{2}\Sobn{r}{2}+\Sobn{W_\la +Z_{\la,a}}{2}\Sobn{u}{2}\) \Lebn{\De r}{2}\\
&+e^{\frac{\mu}{\la t}}\Lebn{U_\la}{\I}\Lebn{\De r}{2}\\
&\quad \times \sum _{l=0}^2\( \Lebn{\nabla ^lU_\la}{\I}\Lebn{\nabla ^{2-l}r}{2}+\Lebn{\nabla ^l(W_\la +Z_{\la,a})}{\I}\Lebn{\nabla ^{2-l}u}{2}\) \\
\lesssim ~&\mathcal{H}(t)^4+\mathcal{H}(t)^3+t^{-1}\mathcal{H}(t)^2+\mathcal{H}(t).
\end{align*}

Finally, we have
\begin{equation}
\begin{split}
(\ref{energy3d})\leq &~\frac{\ep}{4}e^{\frac{\mu}{\la t}}\hSobn{u(t)}{5}^2+\frac{\ep}{2}t^{-\frac{4}{3}}e^{\frac{\mu}{\la t}}\hSobn{r(t)}{4}^2\\
&-2e^{\frac{\mu}{\la t}}\Im \int (u+U_\la )\nabla \De \bar{u}\cdot \nabla \De \Re r\\
&+C\( \mathcal{H}(t)^6+t^{-\frac{4}{3}}\mathcal{H}(t)^2+\mathcal{H}(t) \) .
\end{split}
\end{equation}

{\bf Estimate for \eqref{energy3e}} is similar to that for \eqref{energy3d}.
We see that
\begin{align*}
(\ref{energy3e})=~&2e^{\frac{\mu}{\la t}}\Re \int (u+U_\la )\De \bar{u}\De \Big[ -\ep \De ^2\Re r+|\nabla |\Im r\Big] \\
=~&-2\ep e^{\frac{\mu}{\la t}}\Re \int \De \big[ (u+U_\la )\De \bar{u}\big] \De ^2\Re r\\
&-2e^{\frac{\mu}{\la t}}\Re \int \nabla \big[ (u+U_\la )\De \bar{u}\big] \nabla |\nabla |\Im r\\
=:\;& (\ref{energy3e}\text{a})+(\ref{energy3e}\text{b}).
\end{align*}

It is easy to bound (\ref{energy3e}b) by $C(\mathcal{H}(t)^3+t^{-1/3}\mathcal{H}(t)^2)$.
On the other hand,
\begin{align*}
|(\ref{energy3e}\text{a})|\leq ~&2\ep e^{\frac{\mu}{\la t}}\Lebn{\De \big[ (u+U_\la )\De \bar{u}\big]}{2}\Lebn{\De ^2\Re r}{2}\\
\leq ~&\frac{\ep}{2}t^{-\frac{4}{3}}e^{\frac{\mu}{\la t}}\hSobn{r}{4}^2+C\ep t^{\frac{4}{3}}e^{\frac{\mu}{\la t}}\Lebn{\De \big[ (u+U_\la )\De \bar{u}\big]}{2}^2.
\end{align*}
For the estimate of the last term, we see that
\begin{eqnarray*}
&&C\ep t^{\frac{4}{3}}e^{\frac{\mu}{\la t}}\Lebn{\De (u+U_\la )}{4}^2\Lebn{\De u}{4}^2\\
&\leq &C\ep t^{\frac{4}{3}}e^{\frac{\mu}{\la t}}\( \Lebn{\nabla ^3u}{2}^{\frac{1}{2}}\Lebn{\nabla ^2u}{2}^{\frac{1}{2}}+t^{-\frac{5}{2}}\) ^2\Lebn{\nabla ^5u}{2}\Lebn{u}{2}\\
&\leq &\frac{\ep}{12}e^{\frac{\mu}{\la t}}\hSobn{u(t)}{5}^2+C\ep t^{\frac{8}{3}}e^{\frac{\mu}{\la t}}\( \Lebn{\nabla ^3u}{2}^{\frac{1}{2}}\Lebn{\nabla ^2u}{2}^{\frac{1}{2}}+t^{-\frac{5}{2}}\) ^4\Lebn{u}{2}^2\\
&\leq &\frac{\ep}{12}e^{\frac{\mu}{\la t}}\hSobn{u(t)}{5}^2+C\( \mathcal{H}(t)^6+\mathcal{H}(t)^2\) ,
\end{eqnarray*}
\begin{eqnarray*}
&&2C\ep t^{\frac{4}{3}}e^{\frac{\mu}{\la t}}\Lebn{\nabla (u+U_\la )}{\I}^2\Lebn{\nabla \De ^2u}{2}^2\\
&\leq &2C\ep t^{\frac{4}{3}}e^{\frac{\mu}{\la t}}\( \Sobn{u}{2+}+t^{-2}\) ^2\Lebn{\nabla ^5u}{2}\Lebn{\nabla u}{2}\\
&\leq &\frac{\ep}{12}e^{\frac{\mu}{\la t}}\hSobn{u(t)}{5}^2+C\ep t^{\frac{8}{3}}e^{\frac{\mu}{\la t}}\( \Sobn{u}{2+}+t^{-2}\) ^4\Lebn{\nabla u}{2}^2\\
&\leq &\frac{\ep}{12}e^{\frac{\mu}{\la t}}\hSobn{u(t)}{5}^2+C\( \mathcal{H}(t)^6+\mathcal{H}(t)^2\) ,
\end{eqnarray*}
\begin{eqnarray*}
&&C\ep t^{\frac{4}{3}}e^{\frac{\mu}{\la t}}\Lebn{u+U_\la}{\I}^2\Lebn{\De ^2u}{2}^2\\
&\leq &C\ep t^{\frac{4}{3}}e^{\frac{\mu}{\la t}}\( \Sobn{u}{1+}+t^{-1}\) ^2\Lebn{\nabla ^5u}{2}\Lebn{\nabla ^3u}{2}\\
&\leq &\frac{\ep}{12}e^{\frac{\mu}{\la t}}\hSobn{u(t)}{5}^2+C\ep t^{\frac{8}{3}}e^{\frac{\mu}{\la t}}\( \Sobn{u}{1+}+t^{-1}\) ^4\Lebn{\nabla ^3u}{2}^2\\
&\leq &\frac{\ep}{12}e^{\frac{\mu}{\la t}}\hSobn{u(t)}{5}^2+C\( \mathcal{H}(t)^6+t^{-\frac{4}{3}}\mathcal{H}(t)^2\) .
\end{eqnarray*}
Hence, we have
\begin{equation}\label{energy3-4}
\begin{split}
|(\ref{energy3e})|\leq &~\frac{\ep}{2}t^{-\frac{4}{3}}e^{\frac{\mu}{\la t}}\hSobn{r(t)}{4}^2+\frac{\ep}{4}e^{\frac{\mu}{\la t}}\hSobn{u(t)}{5}^2+C\( \mathcal{H}(t)^6+t^{-\frac{4}{3}}\mathcal{H}(t)^2 \) .
\end{split}
\end{equation}

Finally, we collect \eqref{energy3-1}--\eqref{energy3-4} to obtain
\begin{equation}\label{energy3}
\begin{split}
&\frac{d}{dt}\( 2e^{\frac{\mu}{\la t}}\Re \int (u+U_\la )\De \bar{u}\De \Re r\) \\
&\leq \ep e^{\frac{\mu}{\la t}}\hSobn{u(t)}{5}^2+\ep t^{-\frac{4}{3}}e^{\frac{\mu}{\la t}}\hSobn{r(t)}{4}^2-2e^{\frac{\mu}{\la t}}\Im \int (u+U_\la )\nabla \De \bar{u}\cdot \nabla \De \Re r\\
&+C\( \mathcal{H}(t)^6+t^{-\frac{4}{3}}\mathcal{H}(t)^2+\mathcal{H}(t) \) ,
\end{split}
\end{equation}
and combine Proposition~\ref{prop:energyest} with \eqref{energy2}, \eqref{energy3} to obtain
\begin{align*}
&\frac{d}{dt}E(t)+\frac{\mu}{\la t^2}\mathcal{H}(t)^2+\ep e^{\frac{\mu}{\la t}}\hSobn{u(t)}{5}^2+\ep t^{-\frac{4}{3}}e^{\frac{\mu}{\la t}}\hSobn{r(t)}{4}^2\\
&\qquad \leq Ct^{-\frac{5}{3}}\mathcal{H}(t)^2+C\( 1+\mathcal{H}(t)^2+e^{\frac{\mu}{\la t}}\Sobn{u(t)}{1}^{10}\) ^3.
\end{align*}

At the end, we take $t>0$ sufficiently small and use Lemma~\ref{lem:modenergy} to conclude the proof.
\end{proof}

\begin{Cor}\label{cor:apriori}
There exist $T_0>0$ and $C_0>0$ independent of $\ep$ such that any solution $(u,r)\in X_T$ to \eqref{eq:KMep} on a time interval $(0,T]$ with $0<T<T_0$ satisfies $||(u,r)||_{X_T}\leq C_0$.
\end{Cor}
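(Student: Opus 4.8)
The plan is to convert the differential inequality of Proposition~\ref{prop:modenergyest} into a uniform bound by a soft ODE comparison, once we know that the modified energy $E(t)=E[u,r](t)$ tends to $0$ as $t\to 0^+$. Throughout, all constants are independent of $\ep$, since those in Proposition~\ref{prop:modenergyest} and the implicit constants in Lemma~\ref{lem:modenergy} are.

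First I would check that $E(t)\to 0$ as $t\to 0^+$. Any solution $(u,r)\in X_T$ coincides on a sufficiently small initial interval with the solution constructed in Proposition~\ref{prop:regularize} (by the uniqueness proved there), so $\mathcal H[u,r](t)\to 0$ as $t\to 0^+$. Moreover the interpolation remark following the definition of $X_{T_\ep}$ gives, pointwise in $t$, $e^{\frac{\mu}{2\la t}}t^{-8/3}\Sobn{u(t)}{1}\lesssim \mathcal H(t)$, whence $e^{\frac{\mu}{\la t}}\Sobn{u(t)}{1}^{10}\lesssim t^{80/3}e^{-\frac{4\mu}{\la t}}\mathcal H(t)^{10}\to 0$. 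By Lemma~\ref{lem:modenergy}, $E(t)\sim \mathcal H(t)^2+e^{\frac{\mu}{\la t}}\Sobn{u(t)}{1}^{10}$, so indeed $E(t)\to 0$. Note also that $E$ is $C^1$ on $(0,T]$, which is exactly what makes the inequality in Proposition~\ref{prop:modenergyest} meaningful.

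Next I would discard the two nonnegative viscosity terms on the left of the inequality in Proposition~\ref{prop:modenergyest}, leaving $\frac{d}{dt}E(t)\le C(1+E(t))^3$ on $(0,\min\{T_0,T\})$. Put $g(t):=(1+E(t))^{-2}$; then $g'(t)=-2(1+E(t))^{-3}E'(t)\ge -2C$, so for $0<t_0<t<\min\{T_0,T\}$ we have $g(t)\ge g(t_0)-2C(t-t_0)$. Letting $t_0\to 0^+$ and using $E(t_0)\to 0$ (hence $g(t_0)\to 1$) gives $g(t)\ge 1-2Ct$. Shrinking $T_0$ to $\min\{T_0,\,1/(4C)\}$ — still $\ep$-independent — we obtain $g(t)\ge 1/2$, that is $E(t)\le \sqrt{2}-1$, for every $t\in(0,T]$ as soon as $0<T<T_0$.

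Finally, Lemma~\ref{lem:modenergy} gives $\mathcal H(t)^2\lesssim E(t)\le \sqrt{2}-1$ for all $t\in(0,T]$, so $\|(u,r)\|_{X_T}=\sup_{t\in(0,T]}\mathcal H(t)\le C_0$ with $C_0$ depending only on $\la,a,\mu$. I do not anticipate a genuine difficulty here: the only delicate point is the legitimacy of integrating the differential inequality all the way down to $t=0$, which is precisely what the vanishing $E(t)\to 0$ (supplied by Proposition~\ref{prop:regularize} together with Lemma~\ref{lem:modenergy}) makes possible; the remaining steps form an elementary comparison, and the $\ep$-uniformity is inherited automatically.
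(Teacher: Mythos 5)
Your proof is correct and follows essentially the same route as the paper: both establish $E(t)\to 0$ via Proposition~\ref{prop:regularize} and Lemma~\ref{lem:modenergy}, then integrate the differential inequality of Proposition~\ref{prop:modenergyest} through the substitution $(1+E(t))^{-2}$ down to $t=0$ to get an $\ep$-independent bound on $E$, hence on $\mathcal{H}$. Your extra care about uniqueness and the $H^1$ term only makes explicit what the paper leaves implicit.
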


\begin{proof}
Since $(u,r)\in X_T$, we see from Proposition~\ref{prop:regularize} that $\mathcal{H}(t)\to 0$ and thus $0\leq E(t)\lesssim \mathcal{H}(t)^2+\mathcal{H}(t)^{10}\to 0$ as $t\to 0$. 
By Proposition~\ref{prop:modenergyest}, we have
\[ \frac{d}{dt}\( -\frac{1}{(1+E(t))^2}\) =\frac{2}{(1+E(t))^3}\frac{d}{dt}E(t)\leq 2C\]
for $0<t<T$, where $C>0$ is the constant appearing in Proposition~\ref{prop:modenergyest}.
Integrating it on $(0,t)$, we have
\[ 1-\frac{1}{(1+E(t))^2}\leq 2Ct.\]
Thus, we have
\[ E(t)\leq \frac{1}{(1-2Ct)^{1/2}}-1\]
for $0<t<(2C)^{-1}$.
In particular, we have $\mathcal{H}(t)^2\lesssim E(t)\leq 1$ for $0<t<3/(8C)$.
\end{proof}

We are now in a position to prove Theorem~\ref{Thm:perturb}.
We will use the Ascoli-Arzel\`a theorem (see, for instance, \cite{KNbook}) to obtain a solution of \eqref{eq:KM} from approximate solutions.
\begin{Thm}[Ascoli-Arzel\`a]\label{thm:AA}
Let $X$ be a compact Hausdorff space and $Y$ a metric space.
Then a subset $F$ of $C(X;Y)$ is compact in the topology of uniform convergence if and only if it is equicontinuous, pointwise relatively compact and closed.
\end{Thm}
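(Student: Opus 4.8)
The plan is to prove the two implications separately, taking the topology of uniform convergence on $C(X;Y)$ to be the one induced by the uniformity whose basic entourages are $\{(f,g):\sup_{x\in X}d_Y(f(x),g(x))<\epsilon\}$, and recalling that this makes $C(X;Y)$ Hausdorff since $Y$ is. Here \emph{equicontinuous} means: for every $x\in X$ and $\epsilon>0$ there is a neighborhood $U$ of $x$ with $d_Y(f(y),f(x))<\epsilon$ for all $y\in U$ and all $f\in F$; and \emph{pointwise relatively compact} means each fiber $F(x):=\{f(x):f\in F\}$ has compact closure in $Y$. The necessity direction (compactness forces the three conditions) is routine; sufficiency is the heart of the matter.

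For necessity I would argue as follows. Closedness is immediate because $C(X;Y)$ is Hausdorff and compact subsets of Hausdorff spaces are closed. Pointwise relative compactness follows since each evaluation $e_x:f\mapsto f(x)$ is $1$-Lipschitz, hence continuous into $Y$, so $F(x)=e_x(F)$ is compact. For equicontinuity I would use that a compact subset of a uniform space is totally bounded: given $x_0$ and $\epsilon>0$, cover $F$ by finitely many sup-balls $B(f_i,\epsilon/3)$, invoke continuity of each $f_i$ at $x_0$ to produce a common neighborhood $U$ of $x_0$ on which every $f_i$ varies by less than $\epsilon/3$, and approximate an arbitrary $f\in F$ by a nearby $f_i$ via the triangle inequality to get $d_Y(f(x),f(x_0))<\epsilon$ on $U$ uniformly in $f$.

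The sufficiency direction rests on one key lemma: on an equicontinuous family the topology of pointwise convergence coincides with that of uniform convergence. Since the uniform topology is always finer than the pointwise one, and since a topology is determined by its convergent nets, it suffices to show that an equicontinuous net converges uniformly whenever it converges pointwise; this uses compactness of $X$ to reduce a modulus-of-continuity estimate to finitely many points $x_1,\dots,x_n$ whose neighborhoods cover $X$, followed by the three-$\epsilon$ splitting $d_Y(f_\alpha(y),f(y))\le d_Y(f_\alpha(y),f_\alpha(x_i))+d_Y(f_\alpha(x_i),f(x_i))+d_Y(f(x_i),f(y))$. Granting the lemma, the plan is: view $F\subseteq Y^X$; by pointwise relative compactness $F$ lies in $\prod_{x}\overline{F(x)}$, which is compact by Tychonoff's theorem, so the pointwise closure $\overline{F}^{\,p}$ is compact in the pointwise topology. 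One then checks that $\overline{F}^{\,p}$ is again equicontinuous and consists of continuous functions (both by passing to the pointwise limit in the equicontinuity estimate), whence $\overline{F}^{\,p}\subseteq C(X;Y)$ and, by the lemma, its pointwise and uniform topologies agree; thus $\overline{F}^{\,p}$ is uniformly compact. Finally, since $F$ is closed in $C(X;Y)$ and $F\subseteq\overline{F}^{\,p}$, it is a closed subset of a uniformly compact set and hence compact.

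I expect the main obstacle to be the key lemma together with its careful deployment on the \emph{enlarged} family $\overline{F}^{\,p}$ rather than on $F$ itself: one must verify that both equicontinuity and continuity survive the pointwise closure before the lemma applies, and that agreement of convergent nets genuinely forces equality of the two topologies on that closure. The remaining ingredients — Tychonoff's theorem, continuity of evaluations, and total boundedness of compact uniform spaces — are standard and enter only as black boxes.
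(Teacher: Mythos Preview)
The paper does not prove this theorem; it merely states it and cites Kelley--Namioka \cite{KNbook} as a reference, so there is no ``paper's own proof'' to compare against. Your outline is a correct rendition of the standard textbook argument (indeed essentially the one in Kelley--Namioka): necessity via total boundedness of compact uniform spaces, sufficiency via Tychonoff on $\prod_x \overline{F(x)}$ together with the lemma that pointwise and uniform convergence coincide on equicontinuous families, after checking that equicontinuity and continuity pass to the pointwise closure.
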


\begin{proof}[Proof of Theorem~\ref{Thm:perturb}]
For any $0<\ep \leq 1$, the solution $(u^\ep ,r^\ep )$ of \eqref{eq:KMep} on $(0,T_\ep ]$ constructed in Proposition~\ref{prop:regularize} is, by the solvability of the initial value problem (Remark~\ref{rem:IVP}) and the a priori estimate (Corollary~\ref{cor:apriori}), uniquely extended to the solution on $(0,T_0]$ belonging to $X_{T_0}$ and satisfying $||(u^\ep ,r^\ep )||_{X_{T_0}}\leq C_0$.
Then, for any (sufficiently small) $\de _0>0$, the family of functions $\{ (u^\ep ,r^\ep )\} _{0<\ep \leq 1}$ is uniformly bounded in $C([\de _0,T_0];H^3\times H^2)$.
Therefore, the Rellich-Kondrachov compactness theorem (see \cite{Adams}, for instance) implies that it is pointwise relatively compact in $C([\de _0,T_0];H^1\times L^2)$.

Let $\de _0\leq t<s\leq T_0$ and $0<\ep \leq 1$.
Using the equation, we see that
\begin{align*}
&\Sobn{u^\ep (s)-u^\ep (t)}{1}\leq \int _t^s\Sobn{\d_t u^\ep (\tau )}{1}d\tau \\
&\leq \int _t^s\bigg( \Sobn{u^\ep (\tau )}{3}+\ep \Sobn{u^\ep (\tau )}{5}+\Sobn{u^\ep (\tau )r^\ep (\tau )}{1}\\
&\quad +\Sobn{(W_\la +Z_{\la ,a})(\tau )u^\ep (\tau )}{1}+\Sobn{U_\la (\tau )r^\ep (\tau )}{1}+\Sobn{Q_{S,0}(\tau )}{1}\bigg) \,d\tau .
\end{align*}
We apply the a priori estimate for $(u^\ep ,r^\ep )$ in $X_{T_0}$ to bound the above integral by $C_{\de _0}(s-t)$, except for the term $\ep \int _t^s ||u^\ep (\tau )||_{H^5}\,d\tau$.
To evaluate this one, we integrate the estimate in Proposition~\ref{prop:modenergyest} to obtain
\[ \ep \int _t^se^{\frac{\mu}{\la \tau}}\hSobn{u^\ep (\tau )}{5}^2\,d\tau \lesssim \int _t^s (1+E[u^\ep ,r^\ep ](\tau ))^3d\tau +\max _{\tau =t,s}|E[u^\ep ,r^\ep ](\tau )|\lesssim 1,\]
where we have used $E(t)\lesssim 1$ which was shown in the proof of Corollary~\ref{cor:apriori}.
Therefore, by the Cauchy-Schwarz inequality, we have
\[ \ep \int _t^s\hSobn{u^\ep (\tau )}{5}\,d\tau \leq \ep ^\frac{1}{2}(s-t)^{\frac{1}{2}}\( \ep \int _t^s\hSobn{u^\ep (\tau )}{5}^2\,d\tau \) ^{\frac{1}{2}}\lesssim (s-t)^{\frac{1}{2}},\]
thus obtain
\[ \Sobn{u^\ep (s)-u^\ep (t)}{1}\lesssim (s-t)^{\frac{1}{2}}.\]
In the same manner, we also have
\[ \Lebn{r^\ep (s)-r^\ep (t)}{2}\lesssim (s-t)^{\frac{1}{2}},\]
concluding that $\{ (u^\ep ,r^\ep )\} _{0<\ep \leq 1}$ is equicontinuous in $C([\de _0,T_0];H^1\times L^2)$.

Applying Theorem~\ref{thm:AA}, we find a sequence $\ep _n\to 0$ such that $(u^{\ep _n},r^{\ep _n})$ converges strongly in $C([\de _0,T_0];H^1\times L^2)$.
Since $\de _0>0$ is arbitrary, writing the limit as $(u,r)\in C((0,T_0];H^1\times L^2)$, we have the a priori estimate for $(u,r)$:
\[ \sup _{0<t<T_0}\( t^{-\frac{8}{3}}e^{\frac{\mu}{2\la t}}\Sobn{u(t)}{1}+t^{-\frac{10}{3}}e^{\frac{\mu}{2\la t}}\Lebn{r(t)}{2}\) \lesssim 1,\]
namely, $(u,r)(t)$ decays exponentially in $H^1\times L^2$ as $t\to 0$.

It then suffices to show that the limit $(u,r)$ constructed above satisfies the integral equation associated to \eqref{eq:KM} on $(\de _0,T_0)\times \T^2$ for any $\de _0>0$ in the sense of distribution.
Let $V_\al (t):=e^{it\De -\al t\De ^2}$ be as in Lemma~\ref{lem:para}.
Since $||V_\al (t)||_{H^s\to H^s}\leq 1$ for $\al \geq 0$ and $s\in \R$, we see that
\begin{align*}
&\norm{\int _{\de _0}^t V_{\ep _n} (t-t')[u^{\ep _n}r^{\ep _n}_R](t')\,dt'-\int _{\de _0}^t V_0(t-t')[u\Re r](t')\,dt'}_{L^{\I}((0,T_0);H^{-1})}\\
&\leq \int _{\de _0}^{T_0}\Sobn{[u^{\ep _n}r^{\ep _n}_R](t')-[u\Re r](t')}{-1}dt'\\
&\quad +\sup _{\de _0<t<T_0}\int _{\de _0}^{t}\Sobn{(V_{\ep _n}(t-t')-V_0(t-t'))[u\Re r](t')}{-1}dt'.
\end{align*}
The first line in the right hand side tends to $0$ as $n\to \I$ because of the convergence $(u^{\ep _n}-u,r^{\ep _n}-r)\to (0,0)$ in $H^1\times L^2$ and the estimate
\[ ||fg||_{H^{-1}(\T^2)}\lesssim ||f||_{H^1(\T^2)}||g||_{L^2(\T^2)},\]
which follows from the Sobolev embedding.
The second line also converges to $0$, since $V_\al (t):H^{-1}\to H^{-1}$ converges to $V_0(t)$ as $\al \to 0$ in the strong operator topology uniformly in $t\in (0,T_0)$.
Consequently, we have
\begin{align*}
&\iint _{(\de _0,T_0)\times \T^2}\int _{\de _0}^t V_{\ep_n}(t-t')[u^{\ep _n}r^{\ep _n}_R](t',x)\,dt'\,\overline{\varphi (t,x)}\,dxdt\\
&\quad \to \quad \iint _{(\de _0,T_0)\times \T^2}\int _{\de _0}^t V_0(t-t')[u\Re r](t',x)\,dt'\,\overline{\varphi (t,x)}\,dxdt
\end{align*}
as $n\to \I$ for any $\varphi \in C^\I _0((\de _0,T_0)\times \T^2)$.
It is easier to show the convergences for the nonlinearity $(W_\la +Z_{\la ,a})u+U_\la \Re r$.
Similarly, we see that
\[ V_\ep (t-\de _0)u^\ep (\de _0)\quad \to \quad V_0(t-\de _0)u(\de _0)\qquad \text{in}\quad C([\de _0,T_0];H^1)\]
as $\ep \to 0$.
We can show that $r$ satisfies the equation in the same manner.
\end{proof}

We conclude this section by explaining the idea for the multi-point blow-up problem.
By a space translation, we may assume that all of given $p$ points in $\T^2$ are actually in $(-\pi ,\pi )^2$.
Following the idea of Godet~\cite{Godet} for the case of \eqref{eq:NLS}, we set our blow-up solution $(u,n)$ to \eqref{eq:Z} as
\begin{gather*}
u:=\sum _{j=1}^pU_j+v,\qquad n:=\sum _{j=1}^pW_j+w,\\
U_j(t,x):=\psi _j(x)\tilde{U}_{\la}(t,x-x_j),\quad W_j(t,x):=\psi _j(x)\tilde{W}_{\la}(t,x-x_j),
\end{gather*}
where $0<\la \ll 1$, $\psi _j(x):=\psi (\frac{x-x_j}{R})$, and $R>0$ is taken to be sufficiently small so that the $p$ balls $B(x_j,2R)$, the support of $\psi _j$, may be mutually disjoint and $\cup _{j=1}^pB(x_j,2R)\subset (-\pi ,\pi )^2$.
Thanks to the support property that $\psi _i\psi _j\equiv 0$ for $1\leq i\neq j\leq p$, there is no interaction between different blow-up portions, and we may treat the multi-point blow-up problem just like the one-point blowup.
In fact, if $(u,n)$ solves \eqref{eq:Z}, then $(v,w)$ will be a solution of
\begin{equation*}
\left\{
\begin{aligned}
&(i \d_t+\De) v = vw + (\textstyle\sum _jW_jv+\sum _jU_jw)+F_1,\\
& (\d_{tt}-\De) w= \De |v|^2 +\De(\textstyle\sum _j\bar{U}_jv+\sum _jU_j\bar{v})+F_2 ,
\end{aligned}
\right.
\end{equation*}
where
\begin{align*}
F_1:=&\textstyle\sum _j(\psi _j-1)\psi _j\tilde{U}_{\la}(t,x-x_j)\tilde{W}_{\la}(t,x-x_j)\\
&-\textstyle\sum _j\( 2\nabla\psi _j\nabla\tilde{U}_{\la}(t,x-x_j)+\De\psi _j\tilde{U}_{\la}(t,x-x_j)\) ,\\
F_2:=&\textstyle\sum _j\( \De(|\psi _j\tilde{U}_{\la}(t,x-x_j)|^2)-\psi _j\De(|\tilde{U}_{\la}(t,x-x_j)|^2)\) \\
&+\textstyle\sum _j\( 2\nabla\psi _j\nabla\tilde{W}_{\la}(t,x-x_j)+\De\psi _j\tilde{W}_{\la}(t,x-x_j)\) ,
\end{align*}
in which there is no interacting term like $\tilde{U}_{\la}(t,x-x_i)\tilde{W}_{\la}(t,x-x_j)$.
Since $F_2$ vanishes around blow-up points, the solution $Z=Z_{\la ,a}$ to the problem
\begin{equation*}
\left\{
\begin{aligned}
&(\d_{tt}-\De) Z= F_2,\\
&Z(0,x)=0,\quad \d_tZ(0,x)=a\textstyle\sum _j\psi _j(x)(1-\psi _j(x))
\end{aligned}
\right.
\end{equation*}
also vanishes on a neighborhood of each $x_j$ for a small time and satisfies the estimate like \eqref{est:Z_la}.
Imitating the argument in section~\ref{formulation}, we consider blow-up solutions of the form $(\sum _jU_j+v,\sum _jW_j+Z+\Re r)$, with $(v,r)$ satisfying
\begin{equation*}
\left\{
\begin{aligned}
&(i \d_t+\De) v = v\Re r + (\textstyle\sum _jW_j+Z)v+\sum _jU_j\Re r+\sum _jU_jZ+F_1,\\
&(i\d_t- |\nabla |) r= |\nabla | \( |v|^2 +\textstyle\sum _j\bar{U}_jv+\sum _jU_j\bar{v}\) ,\\
&(v(t),r(t))\to (0,0)\quad \text{in}~H^1\times L^2.
\end{aligned}
\right.
\end{equation*}
Note that the external force term $\sum _jU_jZ+F_1$ decays exponentially as $t\to 0$, which allows us to show by following the estimates in section~\ref{regularization} that the regularized version of the above problem has an exponentially-decaying solution.
For the a~priori estimate, we introduce the modified energy
\[ E[v,r](t):=\mathcal{H}[v,r](t)^2+2e^{\frac{\mu}{\la t}}\Re \int _{\T^2}(v+\textstyle\sum\limits _{j=1}^pU_j)\De \bar{v}\De \Re r+e^{\frac{\mu}{\la t}}\Sobn{v(t)}{1}^{10}\]
and just follow the proof in this section.
Consequently, we can construct a solution $(v,r)$ of the above problem, which gives a finite time blow-up solution $(u,n)$ of \eqref{eq:Z}.
This solution $(u,n)$ belongs to the energy class if we choose appropriate $a\in \R$.

Similarly to the proof of Theorem~\ref{Thm:blowup} in section~\ref{formulation}, we can verify that
\begin{gather*}
\int _{\T^2}|u(t,x)|^2\,dx\equiv p\int _{\R^2}|P_\la (x)|^2\,dx,\\
\lim _{t\to 0}(\la t)^2\int _{\T^2}|\nabla u(t,x)|^2\,dx=p\int _{\R^2}|\nabla P_\la (x)|^2\,dx,\\
\lim _{t\to 0}(\la t)^2\int _{\T^2}|n(t,x)|^2\,dx=p\int _{\R^2}|N_\la (x)|^2\,dx,
\end{gather*}
and
\[ \lim _{t\to 0}t^2\norm{n_t(t)}_{\hat{H}^{-1}(\T^2)}^2=p\norm{xN_\la}_{L^2(\R^2)}^2,\]
thus obtaining (i) and (ii) of Corollary~\ref{cor:blowup}.
Proof for (iii) is also a trivial modification of the one-point blow-up case.


\section{$L^2$ concentration and nonexistence of minimal mass blow-up solution}\label{sec:concentration}
In this section, we prove Theorem \ref{thm:concentration} and Corollary \ref{cor:nonexistence}.
We first observe that if the solution $(u,n)$ of \eqref{eq:Z} blows up in finite time, then $||\nabla u||_{L^2}$ must diverge.
\begin{Lem}\label{lem:bupu}
Suppose $(u,n)$ be a solution of \eqref{eq:Z} which blows up at $T\in(0,\infty)$.
Then, $||\nabla u(t)||_{L^2}\to \infty$ as $t\to T$.
\end{Lem}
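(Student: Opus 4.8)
The plan is to argue by contradiction, combining the conservation laws for \eqref{eq:Z} with the local well-posedness in the energy space. Suppose $\|\nabla u(t)\|_{L^2}$ does not tend to $\infty$ as $t\to T$; then there are a constant $M<\infty$ and a sequence $t_n\to T$ with $\|\nabla u(t_n)\|_{L^2}\le M$. Since the mass $\|u(t)\|_{L^2}=\|u_0\|_{L^2}$ is conserved, it follows that $\|u(t_n)\|_{H^1}$ stays bounded, and the main work will be to deduce that then $\|n(t_n)\|_{L^2}$ and $\|n_t(t_n)\|_{\hat{H}^{-1}}$ stay bounded as well, which contradicts the blow-up of the solution at $T$.

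For the wave part I would write it in the first-order form $\partial_tn+\operatorname{div}v=0$, $\partial_tv+\nabla n=-\nabla(|u|^2)$ with $v(t)\in L^2(\T^2;\R^2)$, as in the introduction, so that the conserved quantity \eqref{eq:energy} becomes $\mathcal{E}=\|\nabla u\|_{L^2}^2+\tfrac12\bigl(\|n\|_{L^2}^2+\|v\|_{L^2}^2\bigr)+\int_{\T^2}n|u|^2\,dx$. By the Gagliardo--Nirenberg inequality on $\T^2$ and mass conservation, $\bigl|\int n|u|^2\bigr|\le\|n\|_{L^2}\|u\|_{L^4}^2\le\tfrac14\|n\|_{L^2}^2+C\|u\|_{L^2}^2\|u\|_{H^1}^2\le\tfrac14\|n\|_{L^2}^2+C(1+\|\nabla u\|_{L^2}^2)$, with $C$ depending only on $\|u_0\|_{L^2}$. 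Inserting this into $\mathcal{E}=\mathcal{E}(0)$ and absorbing the $\tfrac14\|n\|_{L^2}^2$ term yields, uniformly in $t$,
\[ \|n(t)\|_{L^2}^2+\|v(t)\|_{L^2}^2\;\lesssim\;1+\|\nabla u(t)\|_{L^2}^2,\]
the implicit constant depending only on $\mathcal{E}(0)$ and $\|u_0\|_{L^2}$. Since $n_t=-\operatorname{div}v$, the definition of the $\hat{H}^{-1}$ norm (Remark~\ref{hatH}) gives $\|n_t(t)\|_{\hat{H}^{-1}}=\|\mathbb{P}_Gv(t)\|_{L^2}\le\|v(t)\|_{L^2}$, hence $\|n_t(t)\|_{\hat{H}^{-1}}\lesssim1+\|\nabla u(t)\|_{L^2}$ too.

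Collecting the three bounds, $\|u(t)\|_{H^1}+\|n(t)\|_{L^2}+\|n_t(t)\|_{\hat{H}^{-1}}\lesssim1+\|\nabla u(t)\|_{L^2}$, so along the sequence $t_n\to T$ the full energy norm is bounded by a constant depending only on $M$ and the conserved quantities. I would then invoke the local well-posedness of \eqref{eq:Z} in the energy space (\cite{Ki}): the local existence time of a solution starting from data of norm $\le K$ is bounded below by some $\tau=\tau(K)>0$. Taking $(u,n,n_t)(t_n)$ as initial data for $n$ so large that $T-t_n<\tau$, the solution would extend to $[t_n,t_n+\tau]$, hence beyond $T$, contradicting that $(u,n)$ blows up at $T$. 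Therefore $\|\nabla u(t)\|_{L^2}\to\infty$ as $t\to T$. The only genuinely nontrivial point is this final continuation step, which hinges on the uniform-in-data lower bound for the local existence time; everything else is a direct manipulation of the conservation laws and the Gagliardo--Nirenberg inequality.
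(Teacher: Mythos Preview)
Your argument is correct and follows essentially the same route as the paper: bound the energy, then show that a bounded $\|\nabla u(t_n)\|_{L^2}$ forces $\|n(t_n)\|_{L^2}$ and $\|v(t_n)\|_{L^2}$ to be bounded as well, contradicting blow-up via the local theory. The paper arrives at the same contradiction using the rewriting $\mathcal{E}=\|\nabla u\|_{L^2}^2-\tfrac12\|u\|_{L^4}^4+\tfrac12\int(n+|u|^2)^2+\tfrac12\|v\|_{L^2}^2$ instead of your direct Gagliardo--Nirenberg estimate, but this is only a cosmetic difference.

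One point is worth flagging. You invoke the first-order form $\partial_t n+\nabla v=0$ and exact conservation $\mathcal{E}(t)=\mathcal{E}(0)$, which (as the introduction explains) requires $n_1\in\hat{H}^{-1}$. The lemma, however, is used in the proof of Corollary~\ref{cor:nonexistence}, where only $n_1\in H^{-1}$ is assumed. The paper handles this by setting $v=-\nabla^{-1}(\partial_t n-\hat{n}_1(0))$ and observing that then $\frac{d}{dt}\mathcal{E}=\hat{n}_1(0)\int_{\T^2}(n+|u|^2)\lesssim 1+\mathcal{E}$, so Gronwall still gives $\mathcal{E}(t)\lesssim_T 1$ on $[0,T)$. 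Once you have this uniform bound on $\mathcal{E}$, the rest of your argument goes through verbatim. So your proof is complete for $n_1\in\hat{H}^{-1}$ and needs only this small adjustment in the general case.
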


\begin{proof}
First, by the local well-posedness and conservation of $||u||_{L^2}$, we have
\begin{eqnarray}\label{eq:bupcond}
||\nabla u(t)||_{L^2}+||n(t)||_{L^2}+||v(t)||_{L^2}\to \infty,\ \mathrm{as}\ t\to T,
\end{eqnarray}
where $v=-\nabla^{-1}\(\d_t n-\hat{n}_1(0)\)$.
Since the energy $\mathcal{E}$ satisfies
\begin{eqnarray*}
\frac{d}{dt}\mathcal{E}(u(t),n(t),v(t))=\int_{\T^2}\hat{n}_1(0)(n(t)+|u(t)|^2)\lesssim 1+ \mathcal{E}(u(t),n(t),v(t)),
\end{eqnarray*}
the Gronwall inequality imply
\begin{eqnarray*}
\mathcal{E}(u(t),n(t),v(t))\lesssim_T 1,\ t\in[0,T).
\end{eqnarray*}
Now, suppose there exists $t_n\to T$ such that $||\nabla u(t_n)||_{L^2}\lesssim 1$.
Then, we have
\begin{eqnarray*}
1&\gtrsim&\mathcal{E}(u(t_n),n(t_n),v(t_n))\\
&=&||\nabla u(t_n)||_{L^2}^2-\frac{1}{2}||u(t_n)||_{L^4}^4+\frac{1}{2}\int_{\T^2}\(n(t_n)+|u(t_n)|^2\)^2+\frac{1}{2}||v(t_n)||_{L^2}^2\\
&\gtrsim & -1 + \frac{1}{2}||v(t_n)||_{L^2}^2.
\end{eqnarray*}
Therefore, we see $||v(t_n)||_{L^2}^2$ is bounded.
Further, by the H\"older inequality, we have
\begin{eqnarray*}
1&\gtrsim&\mathcal{E}(u(t_n),n(t_n),v(t_n))\\
&\gtrsim& 1+ \frac{1}{2}||n(t_n)||_{L^2}\(||n(t_n)||_{L^2}-1\).
\end{eqnarray*}
Therefore, we have $||n(t_n)||_{L^2}^2$ bounded.
However, this contradicts with \eqref{eq:bupcond}.
Therefore, we have the conclusion.
\end{proof}

We next introduce a modification of the well known concentration compactness lemma.
\begin{Prop}[Modification of Proposition 1.7.6 of \cite{CazSemi}]\label{prop:cc}
Let $\la_n\to \infty$ and set $\T_n^2:=(\R/2\pi\la_n\Z)^2$.
Let $u_n\in H^1(\T_n^2)$, $||u_n||_{L^2(\T_n^2)}^2\to M$, $||\nabla u_n||_{L^2(\T_n^2)}\lesssim 1$.
Set
\begin{eqnarray*}
\mu:=\lim_{R\to\infty}\liminf_{n\to\infty}\sup_{y\in\T_n^2}\int_{|x-y|<R}|u_n(x)|^2\,dx.
\end{eqnarray*}
Then, there exists a subsequence $\{u_k\}\subset \{u_n\}$ with the following properties.
\begin{enumerate}
\item[$(\mathrm{i})$]
If $\mu=0$, then $\int_{\T_k^2}|u_k|^4\to 0$ as $k\to 0$.
\item[$(\mathrm{ii})$]
For arbitrary $l\geq 1$, there exist $v_{j,k}$ for $j=1,\cdots,l$ and $w_{l,k}$ such that
\begin{itemize}
\item[$(\mathrm{a})$]
For $j=1,\cdots,l$, there exists $y_{j,k}$ such that $\mathrm{supp}v_{j,k}\subset \{x\in \T_k^2 | |x-y_{j,k}|<\la_k/4\}$.
Further, considering $v_{j,k}$ as a function on $\R^2$, we have $v_{j,k}\rightharpoonup v_j$ weakly in $H^1(\R^2)$ and $v_{j,k}\to v_j$ in $L^2\cap L^4(\R^2)$ as $k\to \infty$.
\item[$(\mathrm{b})$]
$\mathrm{supp}v_{j,k}$ and $\mathrm{supp}w_{l,k}$ are pairwise disjoint.
Further, $\sum_{j=1}^l|v_{j,k}|+|w_{l,k}|\leq |u_k|$ and $\sum_{j=1}^l||v_k||_{H^1}+||w_k||_{H^1}\lesssim_l ||u_k||_{H^1}$.
\item[$(\mathrm{c})$]
$||v_{j,k}||_{L^2}^2\to \mu_j$ and $||w_{l,k}||_{L^2}^2\to M-\sum_{j=1}^l\mu_j$, where $\mu_1=\mu$ and $\mu_j=\lim_{R\to\infty}\liminf_{n\to\infty}\sup_{y\in\T_k^2}\int_{|x-y|<R}|w_{j-1,k}(x)|^2\,dx$ for $j\geq 2$.
Further, $\mu_j$ is monotonically nonincreasing and $\mu_l\to 0$ as $l\to \infty$.
\item[$(\mathrm{d})$]
$\int_{\T_k^2}|u_k|^r-\sum_{j=1}^l|v_{j,k}|^r-|w_{l,k}|^r\,dx\to 0$, $r=2,4$.
\item[$(\mathrm{e})$]
$\liminf_{k\to\infty}\int_{\T_k^2}|\nabla u_k|^2-\sum_{j=1}^l|\nabla v_k|^2-|\nabla w_k|^2\,dx\geq 0.$
\item[$(\mathrm{f})$]
$\int_{\T_k^2}|w_{l,k}|^4\lesssim \mu_l$ for sufficiently large $k$.
\end{itemize}
\end{enumerate}

\end{Prop}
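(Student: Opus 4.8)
The plan is to follow the proof of Proposition~1.7.6 in \cite{CazSemi}, modifying it to account for the fact that we work on a sequence of tori $\T_k^2$ whose size grows rather than on a fixed domain. The point that makes the transfer of the Euclidean arguments possible is that, since $\la_k\to\I$, any ball $B(y,\la_k/4)\subset\T_k^2$ is isometric to a ball in $\R^2$.

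First I would prove~(i). By the Gagliardo-Nirenberg inequality on a ball of radius~$1$ one has $\norm{u_k}_{L^4(B(y,1))}^4\lesssim\norm{u_k}_{L^2(B(y,1))}^2\norm{u_k}_{H^1(B(y,1))}^2$; covering $\T_k^2$ by unit balls with uniformly bounded overlap and summing gives $\int_{\T_k^2}|u_k|^4\,dx\lesssim\bigl(\sup_{y}\int_{B(y,1)}|u_k|^2\,dx\bigr)\norm{u_k}_{H^1(\T_k^2)}^2$. If $\mu=0$ then $\liminf_{k}\sup_{y}\int_{B(y,1)}|u_k|^2\,dx=0$, so, passing to a subsequence and using $\norm{u_k}_{H^1}\lesssim 1$, the right-hand side tends to~$0$. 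The same estimate applied with $w_{l,k}$ in place of $u_k$, together with the bound $\sup_{y}\int_{B(y,1)}|w_{l,k}|^2\,dx\lesssim\mu_l$ valid for large~$k$ (a consequence of the definition of $\mu_{l+1}$ and of $\mu_{l+1}\le\mu_l$), yields~(f).

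For~(ii) I would argue inductively. If $\mu_1=\mu>0$, choose $R_1>0$ and cores $y_{1,k}\in\T_k^2$ with $\int_{B(y_{1,k},R_1)}|u_k|^2\,dx\ge\mu/2$ for all large~$k$, regard $B(y_{1,k},\la_k/4)$ as a subset of $\R^2$, and cut off $u_k$ there with a nonnegative smooth $\chi_{1,k}$ chosen so that the transition region carries asymptotically no mass; set $v_{1,k}:=\chi_{1,k}u_k$. Boundedness in $H^1$ yields, along a subsequence, $v_{1,k}\rightharpoonup v_1$ weakly in $H^1(\R^2)$; since the mass of $|v_{1,k}|^2$ is captured in $B(y_{1,k},R_1)$ up to $o(1)$, the Rellich-Kondrachov theorem promotes this to strong convergence $v_{1,k}\to v_1$ in $L^2\cap L^4(\R^2)$. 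Setting $w_{1,k}:=(1-\chi_{1,k})u_k$, defining $\mu_2$ from $w_{1,k}$ as $\mu$ was defined from $u_k$, and iterating produces all the pieces. The cutoffs are taken nonnegative with $\sum_{j}\chi_{j,k}\le 1$, which gives the pointwise bound $\sum_{j}|v_{j,k}|+|w_{l,k}|\le|u_k|$ and the $H^1$ estimates of~(b); their supports can be made pairwise disjoint because, by the argument of \cite{CazSemi}, the cores $y_{j,k}$ diverge from one another as $k\to\I$ (once the bubble at $y_{i,k}$ has been removed, the translated remainders converge weakly to the corresponding profile and their local masses subtract off). Property~(c) follows from $\norm{v_{j,k}}_{L^2}^2\to\mu_j$ together with $\sum_{j}\mu_j\le M$, which forces $\mu_j\to0$; (d) holds because $\int|u_k|^r\,dx\to0$ on the transition regions for $r=2,4$; and (e) is the asymptotic orthogonality of the gradients coming from weak $H^1$ convergence and the separation of supports.

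The main obstacle is the bookkeeping in~(ii): the cutoffs must be selected so that simultaneously the pieces have pairwise disjoint supports, the pointwise bound $\sum_{j}|v_{j,k}|+|w_{l,k}|\le|u_k|$ holds, the $L^2$ and $L^4$ masses split additively up to $o(1)$, and the $H^1$-norms stay under control, all while the ambient torus changes with~$k$. None of this differs in substance from \cite{CazSemi}; the only genuinely new ingredient is the passage from $\T_k^2$ to $\R^2$, which is harmless precisely because $\la_k\to\I$. A more detailed sketch is given in the appendix.
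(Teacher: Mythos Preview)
Your proposal is correct and follows essentially the same route as the paper's proof: both rely on the local $L^4$ estimate (your covering argument is the paper's Lemma~\ref{lem:177}) for (i) and (f), and both extract the bubbles by iterated smooth cutoffs exactly as in Cazenave's Proposition~1.7.6, with the passage from $\T_k^2$ to $\R^2$ justified by $\la_k\to\infty$. The paper is slightly more explicit on two points you leave implicit: it proves a preliminary lemma guaranteeing that the concentration radii $R_k\to\infty$ can be chosen with $R_k<\la_k/4$ and with $\rho_k(u_k,R_k)-\rho_k(u_k,R_k/2)\to 0$ (this is what makes precise your phrase ``the transition region carries asymptotically no mass''), and it isolates as the genuinely new step the verification that the concentration function of $v_{j,k}$, computed now on $\R^2$, still equals $\mu_j$, which is what allows Cazenave's $\R^2$ result to be invoked directly for the strong $L^2\cap L^4$ convergence in (a).
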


We are now in a position to prove Theorem \ref{thm:concentration}.
\begin{proof}[Proof of Theorem \ref{thm:concentration}]
Let $(u,n)$ be a solution of \eqref{eq:Z} and suppose that it blows up at time $T\in(0,\infty)$.
By Lemma \ref{lem:bupu}, we have $||\nabla u(t)||_{L^2(\T^2)}\to\infty$ as $t\to T$.
Set
\begin{eqnarray*}
\mathcal{E}_0(u,n)&:=& \int_{\T^2}|\nabla u|^2+\frac{1}{2}\int_{\T^2}n^2+\int_{\T^2}n|u|^2,\\
\tilde{\mathcal{E}}(u)&:=&\int_{\T^2}|\nabla u|^2-\frac{1}{2}\int_{\T^2}|u|^4.
\end{eqnarray*}
Then, we have
\begin{eqnarray*}
\tilde{\mathcal{E}}(u)\leq \tilde{\mathcal{E}}(u)+\frac{1}{2}\int_{\T^2}(|u|^2+n)^2=\mathcal{E}_0(u,n).
\end{eqnarray*}
Take $t_n\to T$ and set $\la_n:=||\nabla u(t_n)||_{L^2(\T^2)}$.
Then, we have $\la_n\to\infty$. 
Set
\begin{eqnarray*}
U_n(x)&:=&\la_n^{-1}u(t_n,\la_n^{-1}x),\\
N_n(x)&:=&\la_n^{-2}n(t_n,\la_n^{-1}x).
\end{eqnarray*}
Then, we have $||U_n||_{L^2(\T_n^2)}=||u_0||_{L^2(\T^2)}$ and $||\nabla U_n||_{L^2(\T_n^2)}=1$,
where $\T_n^2=(\R/2\pi\la_n\Z)^2$.
Further, since $\mathcal{E}_0\leq \mathcal{E}\lesssim 1$, we have
\begin{eqnarray*}
\mathcal{E}_{0n}(U_n,N_n)=\la_n^{-2}\mathcal{E}_0(u(t_n),n(t_n))\to 0,
\end{eqnarray*}
where $\mathcal{E}_{0n}(U_n,N_n)=\int_{\T_n^2}|\nabla U_n|^2+\frac{1}{2}N_n^2+N_n|U_n|^2$.
Therefore, we have
\begin{eqnarray*}
\limsup_{n\to\infty}\tilde{\mathcal{E}}_n(U_n)\leq \limsup_{n\to\infty}\mathcal{E}_{0n}(U_n,N_n)=0,
\end{eqnarray*}
where $\tilde{\mathcal{E}}_n(U_n)=\int_{\T_n^2}|\nabla U_n|^2-\frac{1}{2}|U_n|^4$.

We prove the theorem by contradiction.
So, we assume there exist $R_0$, $\de_0$ and $n_0$ such that for $n\geq n_0$, we have
\begin{eqnarray*}
\sup_{y\in\T^2}\int_{|x-y|<R_0}|u(t_n,x)|^2\,dx\leq M_0-\de_0.
\end{eqnarray*}
We now apply Proposition \ref{prop:cc} to $U_n$.
First,
\begin{eqnarray*}
\mu&=&\lim_{R\to \infty}\liminf_{n\to \infty}\sup_{y\in\T_n^2}\int_{|x-y|<R}|U_n(x)|^2\,dx\\
&=&\lim_{R\to \infty}\liminf_{n\to \infty}\sup_{y\in\T_n^2}\int_{|x-y|<R/\la_n^2}|u(t_n,x)|^2\,dx\\
&\leq &M_0-\de_0.
\end{eqnarray*}
Therefore, we have $\mu\in[0,M_0-\de_0]$.
Let $U_k$ be the subsequences given by Proposition \ref{prop:cc}.
Now, suppose $\mu=0$, then we have
\begin{eqnarray*}
0&=& \liminf_{k\to\infty}\int_{\T_n^2}|U_k|^4\,dx\\
&=& 2\liminf_{k\to \infty} \(\int_{\T_n^2}|\nabla U_k|^2\,dx-\tilde{\mathcal{E}}_n(U_n)\)\\
&=&2-\limsup_{k\to \infty}\tilde{\mathcal{E}}_n(U_n)\geq 2.
\end{eqnarray*}
Therefore, this is a contradiction.
So, we have $\mu\in (0,M_0-\de_0]$.
We now use Proposition \ref{prop:cc}.
Thus, for every $l\in\N$, we have $v_{1,k},\cdots,v_{l,k}$ and $w_{l,k}$ which satisfy the properties of Proposition \ref{prop:cc}.
Since $||v_{j,k}||_{L^2}^2\to \mu_{j}\leq M_0-\de_0$, we have by the Gagliardo-Nirenberg inequality on $\R^2$ that
\begin{eqnarray*}
\int_{\R^2}|\nabla v_{j,k}|^2-\frac{1}{2}|v_{j,k}|^4\geq c\int_{\R^2}|\nabla v_{j,k}|^2,
\end{eqnarray*}
for some $c>0$.
Take $l_1$ sufficiently large such that $c||\nabla v_1||_{L^2}^2>||w_{l_1,k}||_{L^4(\T_k^2)}^4$ for sufficiently large $k$.
Then we have
\begin{eqnarray*}
0&\geq &\limsup_{k\to \infty}\tilde{\mathcal{E}}_n(U_n)\\
&\geq &\limsup_{k\to \infty}\(\sum_{j=1}^{l_1}\tilde{\mathcal{E}}_n(v_{j,k})+\tilde{\mathcal{E}}_n(w_{l_1,k})\)\\
&\geq & \limsup_{k\to \infty}\(c \sum_{j=1}^{l_1}||\nabla v_{j,k}||_{L^2}^2-||w_{l_1,k}||_{L^4(\T_k^2)}^4\)\\
&\geq &  c||\nabla v_0||_{L^2}^2-||w_{l_1,k}||_{L^4(\T_k^2)}^4>0.
\end{eqnarray*}
This is a contradiction.
Therefore, we have the conclusion of the Theorem.
\end{proof}

Using Theorem \ref{thm:concentration}, we can show Corollary \ref{cor:nonexistence} by following the argument by Glangetas and Merle \cite{GM94b}.
Before proving Corollary \ref{cor:nonexistence}, we introduce a sharp Gagliardo-Nirenberg inequality on $\T^2$.

\begin{Thm}[\cite{CM08}]\label{thm:gn}
Let $u\in H^1(\T^2)$.
Then there exists $B>0$ such that 
\begin{eqnarray*}
\int_{\T^2}|u|^4\leq \(2M_0^{-1}\int_{\T^2}|\nabla u|^2\,dx+B\int_{\T^2}|u|^2\,dx\)\(\int_{\T^2}|u|^2\,dx\).
\end{eqnarray*}
\end{Thm}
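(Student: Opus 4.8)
The plan is to read Theorem~\ref{thm:gn} as the ``second best constant'' problem for the Gagliardo--Nirenberg inequality on the compact manifold $\T^2$ (here $M_0=\Lebn{Q}{2}^2$, so $2M_0^{-1}$ is exactly the sharp $\R^2$ constant). Two things must be shown: that the coefficient $2M_0^{-1}$ of $\Lebn{\nabla u}{2}^2$ is optimal (it cannot be lowered, however large $B$ is taken), and that with this sharp coefficient a \emph{finite} $B$ exists. The first point is the easy one: testing against a family of suitably cut-off bumps $u_\rho(x)=\rho^{-1}Q(\rho^{-1}x)$ concentrating at a single point and letting $\rho\to0$ recovers the sharp $\R^2$ inequality $\Lebn{v}{4}^4\le 2M_0^{-1}\Lebn{\nabla v}{2}^2\Lebn{v}{2}^2$, so $2M_0^{-1}$ is optimal; testing against a constant $u\equiv c$ moreover gives $B\ge|\T^2|^{-1}>0$, so a strictly positive $B$ is genuinely needed. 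The real content is the existence of a finite $B$, which I would establish by contradiction.

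Suppose no finite $B$ works. Since all three quantities are homogeneous of degree $4$ in $u$, I normalize $\Lebn{u}{2}=1$ and set $J(u):=\Lebn{u}{4}^4-2M_0^{-1}\Lebn{\nabla u}{2}^2$; the assumption produces $u_n$ with $\Lebn{u_n}{2}=1$ and $J(u_n)\to+\infty$. The non-sharp Gagliardo--Nirenberg inequality bounds $\Lebn{u_n}{4}^4$ by a constant times $\Lebn{\nabla u_n}{2}^2+1$, forcing $\Lebn{\nabla u_n}{2}\to\infty$. Then, exactly as in the proof of Theorem~\ref{thm:concentration}, I rescale: with $\la_n:=\Lebn{\nabla u_n}{2}\to\infty$ and $V_n(x):=\la_n^{-1}u_n(\la_n^{-1}x)$ on $\T_n^2:=(\R/2\pi\la_n\Z)^2$ one computes $\Lebn{V_n}{2}=\Lebn{\nabla V_n}{2}=1$ and $\Lebn{u_n}{4}^4=\la_n^2\Lebn{V_n}{4}^4$, so that $J(u_n)=\la_n^2\bigl(\Lebn{V_n}{4}^4-2M_0^{-1}\bigr)\to+\infty$. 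The task reduces to contradicting $\la_n^2(\Lebn{V_n}{4}^4-2M_0^{-1})\to+\infty$.

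I would then feed $\{V_n\}$ into the modified concentration--compactness lemma (Proposition~\ref{prop:cc}) with $M=1$. Decomposing $V_k$ into profiles $v_{j,k}\to v_j$ (strongly in $L^2\cap L^4(\R^2)$ by (a)) plus a remainder $w_{l,k}$, applying the sharp $\R^2$ inequality to each $v_j$, and using (c)--(f) yields
\[ \limsup_k\Lebn{V_k}{4}^4\le\sum_{j=1}^l2M_0^{-1}\Lebn{\nabla v_j}{2}^2\mu_j+C\mu_l\le 2M_0^{-1}\Big(\sum_j\Lebn{\nabla v_j}{2}^2\Big)+C\mu_l\le 2M_0^{-1}+C\mu_l, \]
where I used $\mu_j\le1$, the bound $\sum_j\Lebn{\nabla v_j}{2}^2\le1$ from (e) and weak lower semicontinuity, and finally $\mu_l\to0$ as $l\to\infty$. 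Combined with $\liminf_k\Lebn{V_k}{4}^4\ge2M_0^{-1}$ this gives $\Lebn{V_k}{4}^4\to2M_0^{-1}$ along the subsequence, and tracing the equality cases shows the mass concentrates into a single bubble $v_1$ saturating the sharp $\R^2$ inequality, i.e. $v_1$ is a translate/rescaling of $Q$.

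The main obstacle is that this crude analysis only delivers $\Lebn{V_k}{4}^4-2M_0^{-1}=o(1)$, whereas the contradiction requires this excess to decay \emph{faster} than $\la_k^{-2}$. Extracting that rate is the genuine content of \cite{CM08}: it demands a quantitative blow-up analysis near the concentration point, namely a sharp expansion of $\Lebn{V_k}{4}^4$ in the concentration scale, controlled through the Euler--Lagrange equation satisfied by near-maximizers together with a Pohozaev-type identity on $\T^2$. Flatness of $\T^2$ removes the curvature contributions present in the general Riemannian setting and simplifies this expansion, but the delicate step — ruling out a loss at order $\la_k^{-2}$ by comparing the localized profile with $Q$ and absorbing the cutoff errors — remains the crux, and it is there that essentially all the effort would go.
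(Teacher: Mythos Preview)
The paper does not prove Theorem~\ref{thm:gn}; it is simply quoted from \cite{CM08} and used as a black box in Lemma~\ref{lem:energy} and the proof of Corollary~\ref{cor:nonexistence}. So there is no ``paper's own proof'' to compare against.

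Your outline is a faithful description of how second-best-constant results of this type are proved, and your identification of where the real work lies is accurate. The concentration-compactness reduction you sketch does yield $\limsup_k\Lebn{V_k}{4}^4\le 2M_0^{-1}$, but --- as you say --- this is only $o(1)$, while the contradiction hypothesis $\la_n^2(\Lebn{V_n}{4}^4-2M_0^{-1})\to+\infty$ can coexist with that as long as the excess decays more slowly than $\la_n^{-2}$. Bridging that gap is exactly the content of \cite{CM08}: one must upgrade the soft compactness statement to a quantitative one, typically by showing that near-extremals are, after rescaling, $H^1$-close to a translate of $Q$, and then expanding the functional to second order around $Q$ to see that the defect is $O(\la^{-2})$ (the flatness of $\T^2$ indeed kills the scalar-curvature term that appears on a general closed manifold). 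Your proposal correctly flags this step but does not carry it out, so what you have written is an honest outline rather than a proof; the missing estimate is not a minor detail but the theorem itself.
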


Using Theorem \ref{thm:gn}, we have the following lemma.

\begin{Lem}\label{lem:energy}
Let $\mathcal{E}$ be the energy defined in $(\ref{eq:energy})$ and let $B$ given in Theorem \ref{thm:gn}.
Then, we have the following.
\begin{enumerate}
\item[$(\mathrm{i})$]
If $||u_0||_{L^2(\T^2)}^2=M_0-\ep_0$, $\ep_0>0$, then 
\begin{eqnarray*}
\mathcal{E}(u,n,v)+M_0B||u||_{L^2}^2\sim_{\ep_0} ||u||_{H^1}^2+||n||_{L^2}^2+||v||_{L^2}^2.
\end{eqnarray*}
\item[$(\mathrm{ii})$]
If $||u_0||_{L^2(\T^2)}^2=M_0$, then
\begin{eqnarray*}
\int_{\T^2}\(|u|^2+n\)^2\,dx+||v||_{L^2}^2\lesssim \mathcal{E}(u,n,v)+\frac{M_0^2B}{2}\lesssim ||u||_{H^1}^2+||n||_{L^2}^2+||v||_{L^2}^2.
\end{eqnarray*}
\end{enumerate}

\end{Lem}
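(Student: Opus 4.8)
The plan is to reduce both statements to one algebraic identity together with the sharp Gagliardo--Nirenberg inequality (Theorem~\ref{thm:gn}), keeping in mind that the conservation of $\Lebn{u}{2}$ makes $\Lebn{u(t)}{2}^2$ a fixed positive constant, equal to $M_0-\ep_0$ in case~(i) and to $M_0$ in case~(ii). Completing the square in the cross term of $\mathcal{E}$ yields
\[ \mathcal{E}(u,n,v)=\tilde{\mathcal{E}}(u)+\tfrac12\int_{\T^2}\bigl(|u|^2+n\bigr)^2\,dx+\tfrac12\Lebn{v}{2}^2,\qquad \tilde{\mathcal{E}}(u):=\Lebn{\nabla u}{2}^2-\tfrac12\Lebn{u}{4}^4, \]
which follows from $\int n|u|^2=\tfrac12\int(|u|^2+n)^2-\tfrac12\int n^2-\tfrac12\int|u|^4$, while Theorem~\ref{thm:gn} reads $\Lebn{u}{4}^4\le\frac{2}{M_0}\Lebn{u}{2}^2\Lebn{\nabla u}{2}^2+B\Lebn{u}{2}^4$.

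For the upper bounds in (i) and (ii) I would combine the identity with $\tilde{\mathcal{E}}(u)\le\Lebn{\nabla u}{2}^2$, the crude estimate $\int(|u|^2+n)^2\lesssim\Lebn{u}{4}^4+\Lebn{n}{2}^2$, and $\Lebn{u}{4}^4\lesssim\Lebn{\nabla u}{2}^2+\Lebn{u}{2}^2$ (the last from Theorem~\ref{thm:gn} and $\Lebn{u}{2}^2\le M_0$). This gives $\mathcal{E}\lesssim\Sobn{u}{1}^2+\Lebn{n}{2}^2+\Lebn{v}{2}^2$; the additive constants $M_0B\Lebn{u}{2}^2$ in (i) and $\tfrac12 M_0^2B$ in (ii) are harmless, since $\Sobn{u}{1}^2\ge\Lebn{u}{2}^2$ is itself a positive constant and absorbs them.

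The mass hypotheses enter only in the lower bounds. In case~(i), inserting Theorem~\ref{thm:gn} into $\tilde{\mathcal{E}}(u)$ gives $\tilde{\mathcal{E}}(u)\ge\frac{\ep_0}{M_0}\Lebn{\nabla u}{2}^2-\frac{B}{2}\Lebn{u}{2}^4$, and since $\Lebn{u}{2}^4\le M_0\Lebn{u}{2}^2$ the last term is dominated by $M_0B\Lebn{u}{2}^2$; hence
\[ \mathcal{E}+M_0B\Lebn{u}{2}^2\gtrsim_{\ep_0}\Lebn{\nabla u}{2}^2+\Lebn{u}{2}^2+\Lebn{v}{2}^2+\int_{\T^2}(|u|^2+n)^2\,dx, \]
and $\Lebn{n}{2}^2$ is recovered from $\Lebn{n}{2}\le\Lebn{n+|u|^2}{2}+\Lebn{u}{4}^2$ together with $\Lebn{u}{4}^4\lesssim_{\ep_0}\Lebn{\nabla u}{2}^2+\Lebn{u}{2}^2$. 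In case~(ii) the coefficient $1-\Lebn{u}{2}^2/M_0$ vanishes, so one only obtains $\tilde{\mathcal{E}}(u)\ge-\frac{B}{2}M_0^2$; fed back into the identity this gives $\int(|u|^2+n)^2+\Lebn{v}{2}^2\le2\bigl(\mathcal{E}+\tfrac12M_0^2B\bigr)$, which is exactly the one-sided bound of (ii) and explains why no reverse inequality is available there.

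Everything is routine once the completion-of-square identity is recorded; the only point requiring a little care is the bookkeeping of the additive constants and the use of mass conservation to treat $\Lebn{u}{2}^2$ as a fixed positive number, so that constants may be traded against $\Sobn{u}{1}^2$. I do not expect any genuine obstacle.
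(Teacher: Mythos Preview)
Your argument is correct. For part~(ii) it coincides with the paper's proof, which records exactly the same completion-of-square identity $\mathcal{E}=\tilde{\mathcal{E}}(u)+\tfrac12\int(|u|^2+n)^2+\tfrac12\Lebn{v}{2}^2$ and combines it with $\tilde{\mathcal{E}}(u)+\tfrac12 M_0^2B\ge 0$ from Theorem~\ref{thm:gn}.

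For part~(i) there is a minor but genuine difference in route. The paper does \emph{not} pass through the completed square; instead it bounds the cross term directly via Cauchy--Schwarz and Young,
\[
\Bigl|\int n|u|^2\Bigr|\le \frac{\Lebn{n}{2}^2}{2(1+2\delta)}+\Bigl(\tfrac12+\delta\Bigr)\Lebn{u}{4}^4,
\]
then applies Theorem~\ref{thm:gn} to $\Lebn{u}{4}^4$ and chooses $\delta$ small. This keeps a positive fraction of $\Lebn{n}{2}^2$ in the lower bound from the outset. Your approach instead controls $\int(|u|^2+n)^2$ and then recovers $\Lebn{n}{2}^2$ afterward via the triangle inequality $\Lebn{n}{2}\le\Lebn{n+|u|^2}{2}+\Lebn{u}{4}^2$. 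The paper's version is marginally more direct (no detour to retrieve $\Lebn{n}{2}^2$), while yours has the aesthetic advantage of using a single identity for both parts; neither buys anything substantive over the other.
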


\begin{proof}
(i): First, by Theorem \ref{thm:gn}, we have
\begin{eqnarray*}
\left|\int_{\T^2}n|u|^2\,dx\right|&\leq &||n||_{L^2}||u||_{L^4}^2\\
&\leq &\frac{||n||_{L^2}^2}{2(1+2\de)}+\(\frac{1}{2}+\de\)\(M_0-\ep_0\)\(\frac{2||\nabla u||_{L^2}^2}{M_0}+B||u||_{L^2}^2\).
\end{eqnarray*}
By taking $\de>0$ sufficiently small, we have
\begin{eqnarray*}
\left|\int_{\T^2}n|u|^2\,dx\right| \leq \(\frac{1}{2}-\frac{\ep_0}{2}\)||n||_{L^2}^2+\(1-\frac{\ep_0}{2}\)\int_{\T^2}|\nabla u|^2\,dx+\frac{M_0B}{2}||u||_{L^2}^2. 
\end{eqnarray*}
Therefore, we have
\begin{eqnarray*}
\mathcal{E}(u,n,v)+M_0B||u||_{L^2}^2\geq  \frac{\ep_0}{2}\Lebn{\nabla u}{2}^2+\frac{M_0B}{2}||u||_{L^2}^2+\frac{\ep_0}{2}\Lebn{n}{2}^2+\frac{1}{2}\Lebn{v}{2}^2.
\end{eqnarray*}
This implies that $\mathcal{E}(u,n,v)+M_0B||u||_{L^2}^2$ is equivalent to the square of the norm of $H^1\times L^2 \times L^2$.

(ii): By Theorem \ref{thm:gn}, we have
\begin{eqnarray*}
||\nabla u||_{L^2}^2-\frac{1}{2}||u||_{L^4}^4+\frac{M_0^2B}{2}\geq 0.
\end{eqnarray*}
Therefore, by the following identity, we have the conclusion.
\begin{equation*}
\mathcal{E}(u,n,v)=||\nabla u||_{L^2}^2-\frac{1}{2}||u||_{L^4}^4+\frac{1}{2}\int_{\T^2}\(|u|^2+n\)^2\,dx+\frac{1}{2}||v||_{L^2}^2.
\qedhere
\end{equation*}
\end{proof}

\begin{proof}[Proof of Corollary \ref{cor:nonexistence}]
We first consider the case $||u_0||_{L^2(\T^2)}< ||Q||_{L^2(\R^2)}$,
Since the energy $\mathcal{E}$ satisfies
\begin{eqnarray*}
\frac{d}{dt}\mathcal{E}(u,n,v)=\int_{\T^2}\hat{n}_1(0)(n+|u|^2)\lesssim 1+ \mathcal{E}(u,n,v),
\end{eqnarray*}
where $v=-\nabla^{-1}\(\d_t n-\hat{n}_1(0)\)$.
By Gronwall's inequality, we have
\begin{eqnarray*}
\mathcal{E}(u(t),n(t),v(t))\lesssim_T 1,\ t\in[0,T).
\end{eqnarray*}
Therefore, by Lemma \ref{lem:energy}, we have the conclusion.

For the case $||u_0||_{L^2(\T^2)}= ||Q||_{L^2(\R^2)}$, following Glangetas and Merle \cite{GM94b}, we argue by contradiction.
So, we assume there exists $T>0$ such that
\begin{eqnarray*}
||u(t)||_{H^1}+||n(t)||_{L^2}+||v(t)||_{L^2}\to\infty,\ t\to T.
\end{eqnarray*}
Now, as the previous case, we have
\begin{eqnarray*}
\mathcal{E}(u(t),n(t),v(t))\lesssim_T 1,\ t\in[0,T).
\end{eqnarray*}
Therefore, we have
\begin{eqnarray*}
||u(t)||_{L^2}^2+\int_{\T^2}(n(t)+|u(t)|^2)^2+||v(t)||_{L^2}^2\lesssim_T 1,\ t\in[0,T).
\end{eqnarray*}
Next, for $t\in[0,T)$,
\begin{eqnarray*}
||n||_{H^{-1}}&\lesssim &1+\int_0^t||n_t(s)||_{H^{-1}}\,ds\\
&\lesssim & 1+\int_0^t||\nabla v(s)||_{H^{-1}}\,ds\\
&\lesssim & 1+\int_0^t||v(s)||_{L^2}\,ds\lesssim 1.
\end{eqnarray*}
Therefore, we have
\begin{eqnarray*}
|| |u(t)|^2||_{H^{-1}}&\leq & ||n(t)||_{H^{-1}}+||n(t)+|u(t)|^2||_{H^{-1}}\\
&\lesssim & 1+ ||n(t)+|u(t)|^2||_{L^2}\lesssim 1.
\end{eqnarray*}
Finally, by Theorem \ref{thm:concentration}, we have $|u(t_n,x-x_n)|^2\rightharpoonup M_0\de_{x=0}$ as $t_n\to T$ in the distribution sense, where $\de_{x=0}$ is a delta function.
On the other hand, $|u(t_n,x-x_n)|^2$ is bounded in $H^{-1}$. So, by taking a subsequence of $|u(t_n,x-x_n)|^2$, it has a weak limit.
This implies $M_0\de_{x=0}\in H^{-1}$.
However since $\de_{x=0}\notin H^{-1}$, this is a contradiction.
\end{proof}

In the case $||u_0||_{L^2}<M_0$ and $n_1\in \hat{H}^{-1}$, we have the conservation of energy.
This implies that $||(u,n,n_t)||_{H^1\times L^2\times \hat{H}^{-1}}$ is bounded globally in time.
However, in the case $||u_0||_{L^2}<M_0$ and $n_1\in H^{-1}\setminus\hat{H}^{-1}$, there exists a global grow-up solution.

\begin{Prop}
For arbitrary $M$, there exists a time global solution $(u,n)$ of $($\ref{eq:Z}$)$ with $||u||_{L^2}^2=M$ such that $||(u(t),n(t),n_t(t))||_{{H^1}\times L^2\times H^{-1}}\to  \infty$ as $t\to \infty$.
\end{Prop}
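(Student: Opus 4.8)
The plan is to exhibit an explicit \emph{spatially homogeneous} solution. I look for $u(t,x)=A\,e^{i\fy(t)}$ with a real constant $A>0$ to be chosen (the case $M=0$, where $u\equiv 0$, is even simpler) and $n(t,x)=\nu(t)$ independent of $x$. For such an ansatz $\Delta u\equiv 0$ and $|u|^2\equiv A^2$, so the Schr\"odinger equation in \eqref{eq:Z} collapses to the pointwise identity $-\dot\fy(t)=\nu(t)$, while the wave equation collapses to $\ddot\nu(t)=\Delta\nu+\Delta(A^2)=0$. I therefore take $\nu(t)=\al+\be t$ and $\fy(t)=-\al t-\tfrac{\be}{2}t^2$ with arbitrary real constants $\al,\be$, and fix $A$ by $A^2|\T^2|=M$, i.e. $A=\sqrt{M}/(2\pi)$.

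The resulting pair
\[ (u,n)=\Bigl(\tfrac{\sqrt M}{2\pi}\,e^{-i(\al t+\be t^2/2)},\ \al+\be t\Bigr) \]
is smooth in $(t,x)$, defined for all $t\ge 0$, solves \eqref{eq:Z} with data $(u_0,n_0,n_1)=(\tfrac{\sqrt M}{2\pi},\al,\be)\in H^1(\T^2)\times L^2(\T^2)\times H^{-1}(\T^2)$, belongs to $C([0,\infty);H^1(\T^2)\times L^2(\T^2)\times H^{-1}(\T^2))$, and satisfies $\Lebn{u(t)}{2}^2=M$ for all $t$; in particular it is a time-global solution. It remains only to read off the norms. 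Since $\nabla u\equiv 0$ and $\Lebn{u(t)}{2}$ is constant, $\Sobn{u(t)}{1}$ is constant in $t$; since $n_t(t)\equiv\be$ is constant in both variables, $\norm{n_t(t)}_{H^{-1}(\T^2)}$ is a $t$-independent finite number; but $\Lebn{n(t)}{2}$ equals a positive constant times $|\al+\be t|$. Hence, choosing $\be\ne 0$, we get $\Lebn{n(t)}{2}\to\infty$ and therefore $\norm{(u(t),n(t),n_t(t))}_{H^1\times L^2\times H^{-1}}\to\infty$ as $t\to\infty$, which is the assertion.

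There is essentially no obstacle: the solution is explicit and each claim above is an elementary computation. The only point worth recording is the consistency with the rest of the paper. The datum $n_1=\be$ is a nonzero constant, so $\hat n_1(0)\ne 0$, i.e. $n_1\in H^{-1}(\T^2)\setminus\hat H^{-1}(\T^2)$; consequently the conserved energy $\mathcal E$ of \eqref{eq:energy} is unavailable for this solution, which is precisely why grow-up is not excluded here --- in contrast to the case $n_1\in\hat H^{-1}$ and $\Lebn{u_0}{2}^2<M_0$, where Lemma~\ref{lem:energy} together with the (near-)conservation of $\mathcal E$ forces $\norm{(u,n,n_t)}_{H^1\times L^2\times\hat H^{-1}}$ to stay bounded (cf.\ Corollary~\ref{cor:nonexistence}). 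More conceptually, the spatial mean $\hat n(t,0)$ solves $\d_{tt}\hat n(t,0)=0$, hence grows linearly when $\hat n_1(0)\ne 0$, and it enters the Schr\"odinger equation only as a spatially homogeneous real potential, which can be gauged away by $u\mapsto e^{i\int_0^t\hat n(s,0)\,ds}u$ without affecting $|u|^2$; this is the structural reason behind the explicit solution above.
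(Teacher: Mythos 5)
Your proposal is correct and is essentially the paper's own argument: the paper exhibits the explicit spatially homogeneous solution $(u,n)=(ae^{-iat^2/2},bt)$ and concludes from $\|n(t)\|_{L^2}\sim t$, which is exactly your ansatz (you merely add the constant offset $\al$, normalize $A$ so that $\Lebn{u}{2}^2=M$, and in fact tacitly correct the paper's phase, which should be driven by the coefficient of $n$ rather than by $a$ when $a\ne b$). No further comment is needed.
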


\begin{proof}
One can easily check that $(u(t),n(t))=(ae^{-\frac{iat^2}{2}}, bt)$ is the solution of (\ref{eq:Z}) for any $a,b\in\R$.
Since $||n(t)||_{L^2}\sim t$, we have the conclusion of the Proposition.
\end{proof}

\appendix
\section{Proof of Proposition \ref{prop:cc}}
Proposition \ref{prop:cc} is a small modification of Proposition 1.7.6 of \cite{CazSemi}.
However, for the readers convenience, we give a sketch of the proof here.

In this section, we assume that $u_n$, $\la_n$ always satisfy the assumption of Proposition \ref{prop:cc}.
Set
\begin{eqnarray*}
\rho_n(u_n,R):=\sup_{y\in\T_n^2}\int_{|x-y|<R}|u(x)|^2\,dx.
\end{eqnarray*}

\begin{Lem}
There exists a subsequence of $u_n$ (which we also denote as $u_n$) such that there exists $R_n\to\infty$ such that $R_n<\la_n/4$ and
\begin{eqnarray*}
\mu=\lim_{n\to\infty}\rho_n(u_n,R_n)=\lim_{n\to\infty}\rho_n(u_n,R_n/2).
\end{eqnarray*}
\end{Lem}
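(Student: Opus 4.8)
The plan is to produce the subsequence and the radii $R_n$ by a purely soft argument, using only the monotonicity of $R\mapsto\rho_n(u_n,R)$, the uniform bound $\sup_n\|u_n\|_{L^2(\T_n^2)}^2<\I$ (which follows from $\|u_n\|_{L^2(\T_n^2)}^2\to M$), and the definition of $\mu$ as an increasing limit. No genuine analysis is needed, only bookkeeping of nested infinite index sets.

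First I would record two preliminary facts. (a) Since $\{|x-y|<R\}\subset\{|x-y|<R'\}$ for $R<R'$, the function $R\mapsto\rho_n(u_n,R)$ is nondecreasing and bounded by $\|u_n\|_{L^2(\T_n^2)}^2$; hence $L(R):=\liminf_{n\to\I}\rho_n(u_n,R)$ is nondecreasing in $R$, and $\mu=\lim_{R\to\I}L(R)=\sup_{R>0}L(R)$, so in particular $L(R)\le\mu$ for every $R>0$. (b) \emph{A lower bound for free:} for \emph{any} sequence $S_n\to\I$ (and likewise along any subsequence), $\liminf_{n\to\I}\rho_n(u_n,S_n)\ge\mu$. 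Indeed, fix $R>0$; for all large $n$ one has $S_n>R$, hence $\rho_n(u_n,S_n)\ge\rho_n(u_n,R)$, so $\liminf_{n\to\I}\rho_n(u_n,S_n)\ge L(R)$, and letting $R\to\I$ gives the claim.

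Next comes the selection. Fix any increasing sequence $\sigma_j\nearrow\I$, say $\sigma_j=j$. For each $j$, since $\liminf_{n\to\I}\rho_n(u_n,\sigma_j)=L(\sigma_j)\le\mu<\mu+1/j$, the set $A_j:=\{n:\rho_n(u_n,\sigma_j)<\mu+1/j\}$ is infinite; and because $\la_n\to\I$, the set $\{n:\la_n>4\sigma_j\}$ is cofinite, so $A_j':=A_j\cap\{n:\la_n>4\sigma_j\}$ is still infinite. Choose indices $n_1<n_2<\cdots$ with $n_j\in A_j'$, pass to the subsequence $(u_{n_j},\la_{n_j})$, relabel it $(u_j,\la_j)$, and set $R_j:=\sigma_j$. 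By construction $R_j\to\I$, $R_j<\la_j/4$, and $\rho_j(u_j,R_j)<\mu+1/j$, so $\limsup_{j\to\I}\rho_j(u_j,R_j)\le\mu$; combined with (b) applied along this subsequence to $S_j=R_j$, this yields $\lim_{j\to\I}\rho_j(u_j,R_j)=\mu$. Finally, $R_j/2\to\I$ gives $\liminf_{j\to\I}\rho_j(u_j,R_j/2)\ge\mu$ by (b), while monotonicity in the radius gives $\rho_j(u_j,R_j/2)\le\rho_j(u_j,R_j)\to\mu$; hence $\lim_{j\to\I}\rho_j(u_j,R_j/2)=\mu$ as well, which is the assertion.

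I do not anticipate a real obstacle. The only points worth a second look are that the $n_j$ can indeed be chosen strictly increasing (immediate, since each $A_j'$ is infinite) and that the monotonicity of $\rho_n(u_n,\cdot)$ is valid on the torus — but this follows from "larger set, larger integral" for the intrinsic distance on $\T_n^2$ irrespective of wrap-around, and in any case only radii below $\la_n/4$, well inside the injectivity radius $\pi\la_n$, are ever used.
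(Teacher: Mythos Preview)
Your argument is correct and essentially the same soft monotonicity/liminf construction that underlies the paper's proof (which largely defers to Cazenave's Lemma~1.7.5). The one point of difference is how the constraint $R_n<\la_n/4$ is secured: the paper detours through the $\ell^\infty$-concentration function $\tilde{\rho}_n$ and inclusions between squares and disks, whereas you simply intersect each candidate index set with the cofinite set $\{n:\la_n>4\sigma_j\}$ before selecting. Your route is more direct and self-contained, and it also makes transparent why passing to a subsequence costs nothing in fact~(b), since $\liminf$ along a subsequence dominates the full $\liminf$.
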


\begin{proof}
We only show that we can take $R_n<\la_n/4$.
The rest of the proof is same as the proof of Lemma 1.7.5 of \cite{CazSemi}.

First, set
\begin{eqnarray*}
\tilde{\rho}_n(u_n,R):=\sup_{y\in\T_n^2}\int_{|x-y|_{\infty}<R}|u(x)|^2\,dx,
\end{eqnarray*}
where $|x|_{\infty}=\mathrm{max}\{|x_1|,|x_2|\}$ for $x=(x_1,x_2)$.
Then, from the assumption of $u_n$, we have $\tilde{\rho}_n(u_n,\la_n)\to M$.
Therefore, following the proof of Lemma 1.7.5 of \cite{CazSemi}, we can show that there exists $\tilde{R}_n$ such that
\begin{eqnarray*}
\mu=\lim_{n\to\infty}\tilde{\rho}_n(u_n,\tilde{R}_n)=\lim_{n\to\infty}\tilde{\rho}_n(u_n,\tilde{R}_n/8).
\end{eqnarray*}
So, since
$
\{|x|_{\infty}<\tilde{R}_n/8\}\subset \{|x|<\tilde{R}_n/4\}\subset \{|x|_{\infty}<\tilde{R}_n/4\},
$
we have
$\mu=\lim_{n\to\infty} \rho_n(u_n,\tilde{R}_n/4).
$
Therefore, taking $R_n:=\tilde{R}_n/4$, we have the conclusion.
\end{proof}

\begin{Lem}\label{lem:177}
There exists a constant $K$ which is independent of $n$ such that
\begin{eqnarray*}
\int_{\T_n^2}|u_n|^4\leq K\rho_n(u_n,1)||u||_{H^1(\T_n^2)}^2.
\end{eqnarray*}
\end{Lem}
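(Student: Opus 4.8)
The plan is to deduce the global $L^4$ bound from a family of local Gagliardo--Nirenberg estimates on unit balls, patched together by a covering of $\T_n^2$ with bounded overlap, in the spirit of the corresponding localization lemma in \cite{CazSemi}. Since $\la_n\to\infty$, it is enough to treat $n$ large, say $2\pi\la_n\ge 4$: then every ball $B(y,1)\subset\T_n^2$ is a genuine, non-wrapping Euclidean ball. The finitely many remaining indices only affect the value of the constant $K$, so we ignore them.

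First I would fix a subset $\{y_i\}_{i=1}^{I_n}$ of $\T_n^2$ that is maximal with respect to the property $|y_i-y_j|\ge 1$ for $i\ne j$. By maximality the balls $B(y_i,1)$ cover $\T_n^2$, while the balls $B(y_i,\tfrac12)$ are pairwise disjoint; comparing areas of Euclidean balls gives an absolute constant $N$ (one may take $N=9$) such that every point of $\T_n^2$ lies in at most $N$ of the $B(y_i,1)$. Hence $\int_{\T_n^2}|u_n|^4\le\sum_i\int_{B(y_i,1)}|u_n|^4$ and $\sum_i\norm{u_n}_{H^1(B(y_i,1))}^2\le N\norm{u_n}_{H^1(\T_n^2)}^2$. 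On each unit ball I would invoke the Gagliardo--Nirenberg inequality $\norm{f}_{L^4(B(y_i,1))}^4\lesssim\norm{f}_{L^2(B(y_i,1))}^2\norm{f}_{H^1(B(y_i,1))}^2$, whose constant is independent of $i$ and $n$ by translation invariance, and bound $\norm{u_n}_{L^2(B(y_i,1))}^2\le\rho_n(u_n,1)$ directly from the definition of $\rho_n$. Summing over $i$ then gives
\[
\int_{\T_n^2}|u_n|^4\le\sum_i\norm{u_n}_{L^4(B(y_i,1))}^4\lesssim\rho_n(u_n,1)\sum_i\norm{u_n}_{H^1(B(y_i,1))}^2\lesssim N\,\rho_n(u_n,1)\,\norm{u_n}_{H^1(\T_n^2)}^2,
\]
which is the assertion.

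The only delicate point is that the overlap constant $N$ must not degenerate as $n\to\infty$, and this is precisely what the reduction to large $n$ secures: once the radius-$1$ balls genuinely fit inside a torus whose size grows without bound, the covering is purely local and its overlap bound is the same absolute constant as for the analogous covering of $\R^2$. The local Gagliardo--Nirenberg step (via an extension operator on the unit ball, or a direct argument) is uniform for the same reason, and all remaining manipulations are routine.
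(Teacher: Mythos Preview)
Your argument is correct and is precisely the covering/bounded-overlap plus local Gagliardo--Nirenberg argument of Lemma~1.7.7 in \cite{CazSemi}, which is exactly what the paper invokes. The only content you have added is the observation that for $2\pi\la_n\ge 4$ unit balls do not wrap, making the overlap constant absolute; this is the right adaptation to the tori $\T_n^2$ and matches the paper's intended reference.
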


\begin{proof}
See the proof of Lemma 1.7.7 of \cite{CazSemi}.
\end{proof}

\begin{proof}[Proof of Proposition \ref{prop:cc}]
(i) follows from Lemma \ref{lem:177}.
We show (ii).
Let $\theta\in C^{\infty}(\T^2)$ such that $0\leq \theta\leq 1$ and 
$\theta(x)= 1$ for $0\leq |x|_{\infty}\leq 1/2$ and $\theta(x)=0$ for $|x|_{\infty}\geq 3/4$.
Further, define $\theta_n, \varphi_n\in C^{\infty}(\T_n^2)$ as
\begin{eqnarray*}
\theta_n(x)=\theta\(\frac{|x-y_n(R_n/2)|}{R_n}\),\ \varphi_n(x)=1-\theta_n(x/2).
\end{eqnarray*}
Then, following the proof of Proposition 1.7.6 of \cite{CazSemi}, we see that $v_{1,n}=\theta_nu_n$, $w_{1,n}=\varphi_nu_n$ satisfies (ii) (b-f) for the case $l=1$.
By using the same argument to $w_{1,n}$ we have (ii) (b-f) for the case $l=2$.
So, iteratively, we have (ii) (b-f) for all $l\geq 1$.

We only have to prove (ii) (a).
Now, since $\mathrm{v_{j,k}}\subset \{x\in T_n^2 | |x-y_n(R_n/2)|<R_n\}\subset \{x\in T_n^2 | |x-y_n(R_n/2)|<\la_n/4\}$, so we can consider $v_{j,n}$ as a function on $\R^2$.
Therefore, by a direct application of Proposition 1.7.6 of \cite{CazSemi}, it suffices to show that
\begin{eqnarray*}
\tilde{\mu}_j:=\lim_{R\to\infty}\liminf_{n\to \infty}\sup_{y\in\T_n^2}\int_{|x-y|<R}|v_{j,k}(x)|^2=\mu_j.
\end{eqnarray*}
We only show this for the case $j=1$.
Suppose $\tilde{\mu}_1<\mu$.
By taking $l\geq 1$ sufficiently large, we have $\mu>\mu_l$.
If there exists $2\leq j\leq l$ such that $\tilde{\mu}_j=\mu$, then just rename $v_{j,k}$ as $v_{1,k}$.
Therefore, we have $\mu>\tilde{\mu}_j$ for $j=1,\cdots,l$.
Let $\ep:=(\mu-\max_{j=1,\cdots,l}\{\tilde{\mu}_j,\mu_l\})/2$.
Then, for sufficiently large $R$, we have
\begin{eqnarray*}
\mu-\ep&<&\liminf_{n\to\infty}\sup_{y\in\T_n^2}\int_{|x-y|<R}|u_n|^2\\
&=&\liminf_{n\to\infty}\sup_{y\in\T_n^2}\int_{|x-y|<R}\sum_{j=1}^l|v_{j,n}|^2+|w_{l,n}|^2\\
&=&\max_{j=1,\cdots,l}\{\tilde{\mu}_j,\mu_l\},
\end{eqnarray*}
where the second equality follows from (ii) (d) and the third equality follows from the fact that
\[ \mathrm{dist}(\mathrm{supp}v_{j,n},\mathrm{supp}v_{j',n})>3R\]
for $j\neq j'$ and
\[ \mathrm{dist}(\mathrm{supp}v_{j,n},\mathrm{supp}w_{l,n})>3R\]
for sufficiently large $n$.
This inequality implies $2\ep<\ep$.
This is a contradiction and we have the conclusion.
\end{proof}


\section*{Acknowledgments}
\noindent N. Kishimoto  acknowledges JSPS the Grant-in-Aid for Young Scientists (B) 24740086.
M. Maeda acknowledges JSPS the Grant-in-Aid for Young Scientists (B) 24740081.  %

\def\cprime{$'$}
\providecommand{\bysame}{\leavevmode\hbox to3em{\hrulefill}\thinspace}
\providecommand{\MR}{\relax\ifhmode\unskip\space\fi MR }
\providecommand{\MRhref}[2]{%
  \href{http://www.ams.org/mathscinet-getitem?mr=#1}{#2}
}
\providecommand{\href}[2]{#2}

\bigskip
{\small \textit{E-mail addresses}: \texttt{n-kishi@math.kyoto-u.ac.jp},\ \ \texttt{m-maeda@math.tohoku.ac.jp}}

\end{document}